\numberwithin{equation}{section}
 \newcommand{\comment}[1]{}  
\newcommand{\Q}{\mathbb{Q}}
\newcommand{\C}{\mathbb{C}}
\newcommand{\F}{\mathbb{F}}
\newcommand{\set}[1]{\mathopen{}\left\{#1\mathclose{}\right\}}
\newcommand{\bigset}[1]{\bigl\{ #1 \bigr\}}
\newcommand{\abs}[1]{\mathopen{}\left| #1\mathclose{}\right|}
\newcommand{\bigabs}[1]{\bigl| #1 \bigr|}
\newcommand{\Bigabs}[1]{\Bigl| #1 \Bigr|}
\newcommand{\biggabs}[1]{\biggl| #1 \biggr|}
\newcommand{\sqbrac}[1]{\mathopen{}\left[ #1 \mathclose{}\right]}
\newcommand{\brac}[1]{\mathopen{}\left( #1 \mathclose{}\right)}
\newcommand{\Bigbrac}[1]{\Bigl( #1 \Bigr)}
\newcommand{\biggbrac}[1]{\biggl( #1 \biggr)}
\newcommand{\norm}[1]{\mathopen{}\left\| #1\mathclose{}\right\|}
\newcommand{\bignorm}[1]{\big\| #1 \big\|}
\newcommand{\normnorm}[1]{\| #1 \|}
\newcommand{\ang}[1]{\mathopen{}\left\langle#1\mathclose{}\right\rangle}
\newcommand{\recip}[1]{\frac{1}{#1}}
\newcommand{\trecip}[1]{\tfrac{1}{#1}}
\newcommand{\E}{\mathbb{E}}
\newcommand{\supp}{\mathrm{supp}}
\newcommand{\codim}{\mathrm{codim}\,}
\newcommand{\Spec}{\mathrm{Spec}}
\newcommand{\eps}{\varepsilon}
\let\@@pmod\pmod
\DeclareRobustCommand{\pmod}{\@ifstar\@pmods\@@pmod}
\def\@pmods#1{\mkern4mu({\operator@font mod}\mkern 6mu#1)}
\newtheorem{theorem}{Theorem}[section]
\newtheorem{iteration}[theorem]{Iteration}
\newtheorem{conjecture}[theorem]{Conjecture}
\newtheorem{corollary}[theorem]{Corollary}
\newtheorem{proposition}[theorem]{Proposition}
\newtheorem{lemma}[theorem]{Lemma}
\theoremstyle{definition}
\newtheorem{definition}[theorem]{Definition}
\numberwithin{theorem}{section}
\renewcommand{\leq}{\leqslant}
\renewcommand{\geq}{\geqslant}
\begin{document}

\title
{An inverse theorem for the Gowers $U^3$-norm relative to quadratic level sets}

\author{Sean Prendiville}
\address{School of Mathematical Sciences\\
Lancaster University
}
\email{s.prendiville@lancaster.ac.uk}

\date{\today}

\begin{abstract}
We prove an effective version of the inverse theorem for the Gowers $U^3$-norm for functions  supported on high-rank quadratic level sets in  finite vector spaces. For configurations controlled by the $U^3$-norm (complexity-two configurations), this enables one to run a density increment argument with respect to quadratic level sets, which are analogues of Bohr sets in the context of quadratic Fourier analysis on finite vector spaces.  We demonstrate such an argument by deriving an {exponential} bound on the Ramsey number of  three-term progressions which are the same colour as their common difference (``Brauer quadruples''), a result we have been unable to establish by other means.

Our methods also yield polylogarithmic bounds on the density of sets lacking translation invariant configurations of complexity two. Such bounds for four-term progressions were obtained by Green and Tao using a simpler weak-regularity argument. In an appendix, we give an example of how to generalise Green and Tao's argument to other translation-invariant configurations of complexity two. However, this crucially relies on an estimate coming from the Croot-Lev-Pach polynomial method, which may not be applicable to all systems of complexity two. Hence running a density increment with respect to quadratic level sets may still prove useful for such problems. It may also serve as a model for running density increments on more general nil-Bohr sets, with a view to effectivising other Szemer\'edi-type theorems.
\end{abstract}

\maketitle

\setcounter{tocdepth}{1}
\tableofcontents

\section{Introduction}\label{sec:intro}
Bohr sets, as popularised in Bourgain's work on three-term progressions \cite{BourgainTriples},  have become a well-used tool in additive combinatorics. By localising to such structures, many Fourier analytic arguments become  quantitatively more efficient, at the cost of introducing extra technicalities. In the context of quadratic Fourier analysis on $\F_p^n$, the appropriate analogue of a Bohr set is the level set of a quadratic polynomial. In the pursuit of good bounds in variants of Szemer\'edi's theorem for configurations of complexity two (see \cite{GowersWolfTrue} or \cite{manners2021true} for a definition of complexity), it is useful to be able to run density increment arguments with respect to these quadratic level sets. This requires generalising Green and Tao's \cite{GreenTaoInverse} inverse theorem for the $U^3$-norm from functions supported on $\F_p^n$ to functions supported on (sparse) quadratic level sets, without the quantitative loss associated with applying the global inverse theorem to the sparsely supported function. We provide such a generalisation in this article.
\begin{definition}[$U^2$ and $U^3$-norm]\label{def:u3} Given a complex-valued function $f$ on an additively written abelian group, define the $U^2$-norm by\footnote{All summations are over the underlying group.
}
$$
\norm{f}_{U^2}^4 := \sum_{x, h_1, h_2} f(x)\overline{f(x+h_1)f(x+h_2)}f(x+h_1+h_2).
$$
Writing $\Delta_h f(x) := f(x)\overline{f(x+h)}$, define the $U^3$-norm 
$$
\norm{f}_{U^3}^{8} := \sum_h\norm{\Delta_h f}_{U^2}^4.
$$
\end{definition}
\begin{theorem}[$U^3$-inverse theorem on quadratic level sets]\label{thm:u3-inverse}
Let $H$ be a finite vector space over $\F_p$ with $p$ an odd prime, let $Q = (q_1, \dots, q_d)$ be a $d$-tuple of quadratic polynomials\footnote{For our precise definition of quadratic polynomial, see \S\ref{sec:notation}.}  $q_i : H \to \F_p$. Given a 1-bounded function $f : H \to \C$ with support $\set{x : f(x) \neq 0}$ contained in $Q^{-1}(0)$, suppose that  
$
\norm{f}_{U^3} \geq \delta \norm{1_{Q^{-1}(0)}}_{U^3}
$. Then either there exists a quadratic polynomial $q : H \to \F_p$  such that
\begin{equation}\label{eq:global-correlation}
\Bigabs{\sum_x f(x) e^{2\pi i q(x)/p}} \gg_p \delta^{O_p(1)} |Q^{-1}(0)|,
\end{equation}
or alternatively $Q$ has low rank, in that there are scalars $\lambda_1, \dots, \lambda_d$ which are not all zero, such that the homogeneous quadratic part of $\lambda_1 q_1+\dots+ \lambda_dq_d$ has rank $O_p(d+\log(2/\delta))$.
\end{theorem}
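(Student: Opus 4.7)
The plan is to adapt Green and Tao's proof of the global $U^3$-inverse theorem, relativising every step to the quadratic level set $S := Q^{-1}(0)$. A direct appeal to the global theorem for $f$ viewed as a function on $H$ would produce correlation of only $\brac{\delta p^{-d}}^{O_p(1)}|H|$ with a quadratic phase, losing unacceptable powers of $p^{-d}$ and rendering any ensuing density-increment application useless. I would first dispose of the low-rank alternative, and so assume that every nonzero combination $\sum_i \lambda_i q_i$ has homogeneous quadratic part of rank exceeding $C_p\brac{d + \log(2/\delta)}$ for a suitable constant $C_p$. Under this pseudorandomness hypothesis, the Fourier expansion $1_S = p^{-d}\sum_{\lambda}e^{2\pi i \lambda\cdot Q/p}$ combined with Gauss sum bounds on the eight-vertex cube configuration yields $\norm{1_{S}}_{U^3}^8 = (1 + o(1))p^{-8d}|H|^4$, transforming the hypothesis into $\norm{f}_{U^3}^8 \gg \delta^8 p^{-8d}|H|^4$.

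The next step is to write $\norm{f}_{U^3}^8 = \sum_h \norm{\Delta_h f}_{U^2}^4$ and pigeonhole to extract many $h$ for which $\norm{\Delta_h f}_{U^2}^4$ is a fixed power of $\delta$ times the typical size $|S \cap (S-h)|^3 \approx p^{-6d}|H|^3$. The $U^2$-inverse theorem then produces, for each such $h$, a character $\phi_h \in H^*$ with $\bigabs{\sum_x \Delta_h f(x) e^{2\pi i \phi_h\cdot x/p}} \gg \delta^{O(1)}|S \cap (S-h)|$. Crucially, the normalising quantity here is the intersection size $|S \cap (S-h)| \approx p^{-2d}|H|$ rather than $|H|$ itself, so this step incurs no spurious $p^{-d}$ factors.

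The main technical obstacle is the Green--Tao-style symmetry step, in which one shows by Cauchy--Schwarz that the assignment $h \mapsto \phi_h$ is approximately additive: many additive quadruples $h_1+h_2 = h_3+h_4$ satisfy $\phi_{h_1}+\phi_{h_2} = \phi_{h_3}+\phi_{h_4}$. In the relative setting every Cauchy--Schwarz manipulation must retain the weights $1_S(x + \omega \cdot h)$ coming from the support of $f$; the cross terms that appear are eight-fold products of quadratic characters evaluated on affine configurations in $H$, and the high-rank hypothesis on $Q$ is exactly what is needed to estimate these with polynomial dependence on $\delta$. This is precisely what pins down the threshold $\gg_p d + \log(2/\delta)$ appearing in the low-rank alternative. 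Once the additive structure on $\phi$ is secured, the Balog--Szemer\'edi--Gowers lemma coupled with Bogolyubov--Ruzsa in $\F_p^n$ upgrades $\phi$ to a genuine linear map $\Phi$ on a subspace of codimension $O_p(\log(1/\delta))$; integrating $\Phi$ yields a quadratic polynomial $q : H \to \F_p$, and a concluding Cauchy--Schwarz verifies the required correlation bound $\gg_p \delta^{O_p(1)}|Q^{-1}(0)|$.
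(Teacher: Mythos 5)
Your overall plan — relativise Green--Tao's argument to $S = Q^{-1}(0)$, handle the $U^2$ step with the correct normalisation $|S\cap(S-h)|$, and use the high-rank hypothesis to control the error terms — is the right framework, and the first two steps match the paper's Lemma 4.1 (via the relative $U^2$-inverse theorem of \S2). The trouble begins at the additive-structure step.

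You assert that the Cauchy--Schwarz manipulations, with the weights $1_S(x+\omega\cdot h)$ retained, produce many \emph{genuine} additive quadruples $\phi_{h_1}+\phi_{h_2}=\phi_{h_3}+\phi_{h_4}$. In the relative setting this is false, and the paper is explicit about it (see the discussion preceding Lemma 4.2). After Cauchy--Schwarz the $x$-summation that in Gowers' argument runs over all of $H$ now runs over a triply-differenced level set $\Psi_{y,h_1,h_2}$, which by Lemma 3.3 is a coset of $V_{h_1,h_2,h_3}^\perp = \langle h_1^TB, h_2^TB, h_3^TB\rangle^\perp$. Orthogonality on this set (Corollary 3.7) therefore only forces the combination $\phi(h_1)-\phi(h_2)-\phi(h_3)+\phi(h_4)$ to lie in the $3d$-dimensional span $\langle h_1^TB, h_2^TB, h_3^TB\rangle$, not to vanish. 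That is, you get \emph{partial} additive quadruples (Lemma 4.2), which is the best possible output of this argument. Bridging from partial to genuine quadruples is the central new technical obstacle, and the paper devotes all of \S5 to it (Lemmas 5.2--5.4, culminating in Lemma 5.1): one modifies $\phi$ to $\phi'(y)=\phi(y)+\mu(y)By$ via three rounds of Cauchy--Schwarz doubling and generic-rank arguments (Lemma 3.4), and then verifies (Corollary 5.5) that the modification does not destroy the Fourier correlation because $\Delta_hf$ is supported on a coset of $\langle h^TB\rangle^\perp$, on which $\phi$ and $\phi'$ agree. Without this idea your argument does not close: Balog--Szemer\'edi--Gowers cannot be run on the partial relation, since the perturbation $\langle h_1^TB,\dots\rangle$ varies with the quadruple.

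Two secondary points. First, your appeal to BSG plus Bogolyubov--Ruzsa with a subspace of codimension $O_p(\log(1/\delta))$ conflates two regimes: the classical route gives a subspace of codimension polynomial in $1/\delta$, while the codimension-$O(\log(1/\delta))$ (or even $O(1)$) outcome requires the polynomial Freiman--Ruzsa theorem, which is what the paper invokes (via Appendix C of Gowers--Green--Manners--Tao). Either route gives a correct, if quantitatively different, conclusion, so this is a matter of bookkeeping rather than a gap. Second, the ``concluding Cauchy--Schwarz'' conceals nontrivial relative versions of the symmetry argument (Lemma 6.3) and the integration argument (Lemma 6.4); both require fresh high-rank pseudorandomness estimates (Lemma 3.4, Corollaries 3.6--3.7) at each doubling step, again because each Cauchy--Schwarz reintroduces $1_{Q^{-1}(0)}$-weights whose Fourier expansion must be controlled. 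These are adaptations rather than new ideas, but they are not one-line.
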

The polynomial dependence on $\delta$ in \eqref{eq:global-correlation} would not be possible without the recent resolution of the polynomial Freiman-Ruzsa (PFR) conjecture due to Gowers, Green, Manners and Tao  \cite{gowers2024martons}. One could modify our proof to avoid this, replacing PFR with the original approach of Green and Tao \cite{GreenTaoInverse}; this would deliver exponential dependence in our inverse theorem (and polynomial dependence at the cost of passing to a subspace of polynomial codimension,  as in \cite{GreenTaoInverse}).

As previously remarked, an inverse theorem such as Theorem \ref{thm:u3-inverse} enables one to run density increment arguments with respect to (high-rank) quadratic level sets.   We demonstrate such an argument by deriving an {exponential} bound on the Ramsey number of  three-term progressions which are the same colour as their common difference (``Brauer quadruples''), a result we have been unable to prove by other means. 
\begin{theorem}[Exponential upper bound for Brauer quadruples in $\F_3^n$]\label{thm:exp-brauer}
Suppose that there exists an $r$-colouring of $\F_3^n\setminus\set{0}$ with no monochromatic \textbf{Brauer quadruple}
\begin{equation}\label{eq:brauer}
x,\ y, \ x+y,\ x+2y.
\end{equation}
Then $n \ll r^{O(1)}$.
\end{theorem}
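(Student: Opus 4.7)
The plan is a density increment argument on quadratic level sets, driven by Theorem \ref{thm:u3-inverse}. Assume for contradiction there is an $r$-colouring of $\F_3^n \setminus \set{0}$ with no monochromatic Brauer quadruple. I would first reparametrise the configuration $x, y, x+y, x+2y$ via $a = x+y$, $d = y$ as $\set{a-d, a, a+d, d}$: a symmetric three-term progression together with its common difference, a complexity-two configuration. A two-step Cauchy--Schwarz (differentiating out the ``extra'' form $d$) reduces the Brauer count to a $3$-AP count, which is then $U^2$-controlled; chaining these establishes a generalised von Neumann inequality of the following shape: on a sufficiently high-rank quadratic level set $V \subseteq H$ with $\nu := |V|/|H|$, the number of Brauer quadruples inside $A \subseteq V$ differs from its main term $\asymp \alpha^4 \nu^4 |H|^2$ (where $\alpha := |A|/|V|$) by an error controlled by $\|1_A - \alpha 1_V\|_{U^3}$, suitably normalised against $\|1_V\|_{U^3}$.

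I would then set up an iteration producing nested high-rank level sets $V_0 \supseteq V_1 \supseteq \cdots$. At step $i$, we have a tuple $Q_i = (q_1, \ldots, q_{d_i})$, level set $V_i = Q_i^{-1}(0)$, and colour class $A_i \subseteq V_i$ of relative density $\alpha_i = |A_i|/|V_i|$; initially $V_0 = \F_3^n$ and $\alpha_0 \geq 1/r$ by pigeonhole. Since any Brauer quadruple inside $A_i$ would produce a monochromatic Brauer quadruple in $\F_3^n$, the set $A_i$ is Brauer-free, and the counting lemma forces $\|1_{A_i} - \alpha_i 1_{V_i}\|_{U^3} \gg_p \alpha_i^{O(1)} \|1_{V_i}\|_{U^3}$. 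Applying Theorem \ref{thm:u3-inverse} to $f := 1_{A_i} - \alpha_i 1_{V_i}$ (which is $2$-bounded and supported on $V_i$), I obtain either a quadratic $q$ with $\bigabs{\sum_x f(x) e^{2\pi i q(x)/3}} \gg_p \alpha_i^{O(1)} |V_i|$, or the low-rank alternative. In the former case, pigeonholing over the three level sets of $q$ produces a $t \in \F_3$ with $|A_i \cap q^{-1}(t)| \geq (\alpha_i + c\alpha_i^{O(1)}) |V_i \cap q^{-1}(t)|$; I set $V_{i+1} := V_i \cap q^{-1}(t)$, $Q_{i+1} := (q_1, \ldots, q_{d_i}, q - t)$, and pass to a colour class $A_{i+1} \subseteq V_{i+1}$ by pigeonhole to continue.

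Because the density increment $\alpha_{i+1} \geq \alpha_i + c \alpha_i^{O(1)} \geq \alpha_i + c/r^{O(1)}$ together with $\alpha_i \leq 1$ caps the iteration at $I \ll r^{O(1)}$ steps, the final codimension satisfies $d_I \ll r^{O(1)}$; provided $n$ exceeds the rank threshold $O_p(d_i + \log(2/\alpha_i)) = O_p(r^{O(1)})$ of Theorem \ref{thm:u3-inverse} at every stage, the iteration cannot stall, so the eventual $\alpha_I > 1$ contradiction forces $n \ll r^{O(1)}$. The hard part will be maintaining the high-rank hypothesis on $Q_i$ throughout, both to validate the counting lemma's main term via equidistribution of quadratic level sets, and to prevent the low-rank alternative of Theorem \ref{thm:u3-inverse} from triggering a rearrangement that destroys the density increment. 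When the rank does threaten to drop, I would regularise $Q_i$ in the spirit of the polynomial regularity lemma --- removing an offending low-rank combination and replacing it by fresh high-rank quadratics --- and verify that these adjustments inflate the codimension only by a bounded amount per iteration, so that the polynomial-in-$r$ bookkeeping survives.
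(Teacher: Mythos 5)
Your plan breaks down precisely because the Brauer configuration is \emph{not} translation-invariant (see Definition~\ref{def:translation-invariant} in the paper), and you treat the four slots $x,\,y,\,x+y,\,x+2y$ symmetrically, weighting all of them by $1_{A_i}$ supported on $V_i$. After the first density increment this is fatal. Suppose the inverse theorem hands you a homogeneous quadratic $q$ and you pass to $V_{i+1} = V_i \cap q^{-1}(t)$ with $t \neq 0$. The second-difference identity $q(x+2y) - 2q(x+y) + q(x) = 2q(y)$ forces any Brauer quadruple with $x,\,x+y,\,x+2y \in q^{-1}(t)$ to have $q(y) = 0$, which contradicts $y \in q^{-1}(t)$. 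So $V_{i+1}$ contains \emph{no} Brauer quadruples at all, the main term of your counting lemma is zero rather than $\asymp \alpha^4\nu^4|H|^2$, the vanishing of the count carries no information, and the increment stalls. Even in the non-degenerate case the main term exponent depends on the rank of the linear parts of the $Q_j$'s and is not $\nu^4$ in general, so the claimed counting lemma is wrong as stated.

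The paper circumvents this with the sparsity-expansion dichotomy (Lemma~\ref{lem:brauer-sp-exp-exp}), whose architecture is exactly designed for non-translation-invariant complexity-two configurations. The tuple $Q$ is kept \emph{homogeneous} throughout (Iteration~\ref{it:only-iteration}); the $x$-slots are weighted by a translated, inhomogeneous level set $(Q+L)^{-1}(a)$ via the maximal level set density $\delta_Q$ of Definition~\ref{def:max-level-density}; and the $y$-slot is always weighted by the untranslated homogeneous level set $Q^{-1}(0)$ (equivalently $Q_0^{-1}(0)$ in Lemma~\ref{thm:brauer-inverse}), so the two systems of constraints on $y$ remain compatible and the main term in Lemma~\ref{lem:counting-brauer} never degenerates. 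Crucially, the ``bad $y$'' set $B = \set{y \in Q^{-1}(0) : \nexists\, x, x+y, x+2y \in A_i}$ is defined relative to the \emph{untranslated} $A_i$, so that the vanishing $\sum_{x,y} 1_A(x)1_A(x+y)1_A(x+2y)1_B(y) = 0$ in \eqref{eq:small-conv-inner-prod} holds despite translating the $x$-slots --- the translation cancels across the three-term progression. You would need to reproduce this asymmetric bookkeeping for the argument to go through; a direct relative Szemer\'edi-style increment on a single weight $1_{A_i}$ does not.

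A secondary issue: you ``pass to a colour class $A_{i+1}$ by pigeonhole'' after each increment, which risks switching colours and losing the accumulated density gain. The paper instead tracks $\sum_{j=1}^r \delta_Q(A_j)$, which increments at every step and is bounded by $r$; combined with Lemma~\ref{lem:density-preservation} (monotonicity of $\delta_Q$ under adding quadratics) this is the cleaner invariant and is needed to cap the iteration length at $O(r(\alpha\beta)^{-O_p(1)})$. Your intuition about regularising $Q$ when rank drops matches Lemma~\ref{lem:high-rank-partitioning}, and the final Chevalley--Warning step (to handle $H \cap Q^{-1}(0) = \set{0}$) in Theorem~\ref{thm:exp-brauer-p} is a detail you would also need to supply.
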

Hitherto, the best upper bound in this problem takes the form $n \ll \exp(r^{O(1)})$, which can be recovered by emulating an argument of Chapman and the author \cite{chapman2020ramsey} (where it is carried out over the integers instead of $\F_p^n$). By removing an exponential, the upper bound in Theorem \ref{thm:exp-brauer} is polynomially close to the following lower bound.
\begin{proposition}[Exponential lower bound for Brauer quadruples in $\F_3^n$]
If $n \leq r$, then there exists an $r$-colouring of $\F_3^n\setminus\set{0}$ with no monochromatic Brauer quadruple \eqref{eq:brauer}.
\end{proposition}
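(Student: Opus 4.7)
The plan is to construct an explicit $n$-colouring, which suffices since $n \leq r$ (any excess colours may be left unused). For each $v \in \F_3^n \setminus \{0\}$, I would set $\chi(v) \in \{1, \dots, n\}$ to be the index of the first nonzero coordinate of $v$, i.e.\ the ``lexicographic'' colouring.

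To verify that no Brauer quadruple is monochromatic under $\chi$, I would suppose for contradiction that $x, y, x+y, x+2y$ all lie in $\F_3^n \setminus \{0\}$ and that $\chi(x) = \chi(y) = \chi(x+y) = \chi(x+2y) = i$. From $\chi(x) = \chi(y) = i$ one obtains $x_j = y_j = 0$ for all $j < i$ and $x_i, y_i \neq 0$, so automatically $(x+y)_j = (x+2y)_j = 0$ for $j < i$. The conditions $\chi(x+y) = i$ and $\chi(x+2y) = i$ therefore reduce to $x_i + y_i \neq 0$ and $x_i + 2y_i \neq 0$, i.e.\ $y_i \notin \{-x_i,\, -x_i/2\}$. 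Since $-1/2 \equiv 1 \pmod{3}$, this becomes $y_i \notin \{-x_i,\, x_i\}$, which combined with $y_i \neq 0$ exhausts $\F_3$ (as $x_i \neq 0$ makes $0, x_i, -x_i$ distinct), yielding the desired contradiction.

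There is essentially no obstacle here: the argument hinges on the $\F_3$-specific coincidence $-1/2 \equiv 1 \pmod{3}$, which makes the two nonvanishing constraints at coordinate $i$, together with $y_i \neq 0$, rule out every residue in $\F_3$. An analogous first-nonzero-coordinate colouring would not work in characteristic $p > 3$, where the two forbidden nonzero values $-x_i$ and $-x_i/2$ are distinct from $\pm x_i$ in general and a different construction would be required.
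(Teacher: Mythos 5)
Your proposal is correct and is essentially the same construction as the paper's: the paper defines the $i$th colour class by $C_i := \set{x : x_i \neq 0 \text{ and } x_j = 0\ \forall j > i}$, i.e.\ colouring by the index of the \emph{last} nonzero coordinate rather than the first, which is identical up to reversing the coordinate order. Your verification, including the observation that the constraints $y_i \neq 0$, $x_i + y_i \neq 0$, $x_i + 2y_i \neq 0$ exhaust $\F_3$, correctly spells out why the colouring works.
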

\begin{proof} For each $i = 1, \dots, r$, define the $i$th colour class by
\begin{equation*}
C_i := \set{x \in \F_3^n :  [x_i \neq 0]\text{ and }(\forall j > i)[x_j =0] }. \qedhere
\end{equation*}
\end{proof}

Returning to applications of our relative $U^3$-inverse theorem, one can also use it to prove the following.
\begin{theorem}[Polylogarithmic bound for translation-invariant configurations of complexity 2]\label{thm:poly-log-density}
Fix constants $c_1, c_2, c_3 \in \F_p$. If $A \subset \F_p^n$ lacks configurations of the form
$$
x, \ x+c_1y,\ x+c_2y,\ x+c_3y, \quad(y \neq 0)
$$
then the density of $A$ satisfies $|A|/p^n \ll_{p} n^{-\Omega_{p}(1)}$.
\end{theorem}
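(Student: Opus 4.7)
The plan is to execute a density-increment iteration against a nested sequence of high-rank quadratic level sets $V_0 = \F_p^n \supset V_1 \supset V_2 \supset \cdots$, with each step powered by Theorem \ref{thm:u3-inverse}. At stage $j$ we maintain a tuple $Q_j$ of quadratics of large rank, set $V_j := Q_j^{-1}(0)$, and track a subset $A_j \subset V_j$ of relative density $\alpha_j \geq \alpha$.

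The analytic engine is a relative generalised von Neumann inequality for the multilinear form
\[
T(f_0, f_1, f_2, f_3) := \sum_{x, y} f_0(x) f_1(x+c_1 y) f_2(x+c_2 y) f_3(x+c_3 y),
\]
where four successive applications of the Cauchy--Schwarz inequality control $|T(f_0, f_1, f_2, f_3)|$ by $\min_i \|f_i\|_{U^3}$ up to standard normalising factors. Decomposing $1_{A_j} = \alpha_j 1_{V_j} + f_j$ and expanding $T(1_{A_j}, 1_{A_j}, 1_{A_j}, 1_{A_j})$, the main term $\alpha_j^4 T(1_{V_j}, 1_{V_j}, 1_{V_j}, 1_{V_j})$ is evaluated via a quasirandomness calculation (Gauss sum estimates, valid because $Q_j$ has high rank) to equal $(1+o(1))\alpha_j^4 |V_j|^4/p^{2n}$. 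Absorbing the trivial $y=0$ configurations and invoking the hypothesis that $A_j$ contains no nontrivial configuration forces
\[
\|f_j\|_{U^3} \gg_p \alpha_j^{O(1)} \|1_{V_j}\|_{U^3}.
\]

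Now apply Theorem \ref{thm:u3-inverse} to $f_j$. In the correlation case we obtain a quadratic $q_j$ with $\bigl|\sum_x f_j(x) e^{2\pi i q_j(x)/p}\bigr| \gg_p \alpha_j^{O_p(1)} |V_j|$. Pigeonholing over the $p$ level sets of $q_j$ produces a coset $V_{j+1} := V_j \cap q_j^{-1}(c)$, which is itself a quadratic level set cut out by the enlarged tuple $Q_{j+1} := (Q_j, q_j - c)$, on which the relative density of $A_j$ exceeds $\alpha_j + \Omega_p(\alpha_j^{O_p(1)})$. In the low-rank alternative, some nontrivial $\F_p$-combination of $Q_{j+1}$ has homogeneous quadratic part of rank $O_p(d_{j+1} + \log(1/\alpha_j))$; I would then pass to the kernel of that low-rank form, losing $O_p(d_{j+1} + \log(1/\alpha_j))$ ambient dimensions but reducing the effective number of independent defining quadratics so that $V_{j+1}$ can again be realised as a high-rank level set.

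The density can only double $O(\log(1/\alpha))$ times, so the iteration must terminate within $T = O_p(\alpha^{-O_p(1)})$ steps. The main obstacle, and the bulk of the technical work, lies in the bookkeeping: one must maintain enough rank in $Q_j$ at every stage to justify both the quasirandomness estimate for $T(1_{V_j}, \dots, 1_{V_j})$ and the hypothesis of Theorem \ref{thm:u3-inverse}, while ensuring that the cumulative codimension added to the ambient affine structure over the full $T$ iterations remains $o(n)$. Closing the budget reveals that the iteration can run as long as $\alpha^{-O_p(1)} \log(1/\alpha) \ll n$, forcing $\alpha \gg_p n^{-\Omega_p(1)}$ at termination. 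The polynomial $\alpha$-dependence in Theorem \ref{thm:u3-inverse}, inherited from PFR, is essential for converting the iteration count $T$ into a polynomial-in-$n$ (rather than polylogarithmic) density bound.
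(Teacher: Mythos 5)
Your proposal runs a density increment relative to high-rank quadratic level sets, powered by the relative $U^3$-inverse theorem (Theorem \ref{thm:u3-inverse}). That is genuinely \emph{not} the proof the paper gives for this statement: Appendix \ref{sec:density} instead combines Green and Tao's weak quadratic regularity lemma (Theorem \ref{thm:weak-reg}) with the Croot--Lev--Pach polynomial method (Lemma \ref{lem:poly}). A single application of weak regularity yields a subspace $H$ and a high-rank tuple $Q$ on which $1_B$ is $\eta$-close in $U^3$ to its projection $\Pi_Q 1_B$; the count with the projected weight then reduces, after a Weyl-sum computation, to counting solutions of a translation-invariant four-variable linear equation for $f_Q : \F_p^d \to [0,1]$, which the polynomial method supplies. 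There is no iteration. The paper explicitly acknowledges your route as viable (see the discussion following Theorem \ref{thm:poly-log-density} in the introduction) but declines it for this theorem as the heavier option; what the lighter route costs is generality, since the polynomial method does not handle an arbitrary residual linear system, which is why Conjecture \ref{conj:complex-2} is flagged as requiring the density-increment argument.

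As a blind sketch of the density-increment route, however, your proposal has genuine gaps. The relative generalised von Neumann bound is asserted, not proved: once the functions are supported on a sparse quadratic level set, each Cauchy--Schwarz step needs the pseudorandomness estimates of \S\ref{sec:high-rank} to control $\ell^2$-masses and sizes of iterated intersections, exactly as is done for the Brauer form in Lemma \ref{thm:brauer-inverse}. Your main-term count $|V_j|^4/p^{2n}$ is also off by a factor of $p^{d}$: for a $d$-tuple of high-rank quadratics the Gauss-sum calculation gives $\asymp |H_j|^2 p^{-3d}$, because only three of the four membership constraints are effectively independent (the space of annihilating $\gamma$-vectors in the orthogonality expansion is one-dimensional).

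The most serious issue is the low-rank repair step. ``Passing to the kernel of the low-rank form'' does not by itself return a set of the form $\tilde Q^{-1}(0)$ for a high-rank homogeneous tuple on a subspace: after restricting, the surviving quadratic acquires a nontrivial linear part and an arbitrary constant, so the new set is a \emph{shifted, translated} level set. Consequently the naive quantity ``density of $A_j$ on $Q_j^{-1}(0)$'' is the wrong object to increment, and the pigeonholing step cannot be closed as stated. This is exactly why the paper's iteration in \S\S\ref{sec:part-high}--\ref{sec:sparsity-expansion-density-increment} tracks the \emph{maximal level set density} $\delta_Q(A)$ of Definition \ref{def:max-level-density}, maximising over translates, linear shifts and constant vectors, and repairs the rank via the partitioning Lemma \ref{lem:high-rank-partitioning}. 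Your overall strategy is sound and mirrors the Brauer argument of \S\S\ref{sec:ramsey-bound}--\ref{sec:sparsity-expansion-density-increment}, but the execution needs the full bookkeeping machinery developed there, which your sketch does not supply.
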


Such a result has been proved in the special case of four-term progressions (when $c_i = i$) by Green and Tao \cite{GreenTaoNewIa}, whose approach does not rely on a relative $U^3$-inverse theorem. Instead, Green and Tao combine a weak regularity argument with  a positivity property specific to four-term progressions (see the paragraph preceding \cite[Corollary 5.4]{GreenTaoNewIa} or the footnote \cite[p.6]{GreenTaoNewIII}). Although our relative $U^3$-inverse theorem \textit{can}  be used to prove Theorem \ref{thm:poly-log-density}, this application does not fully demonstrate the utility of working relative to quadratic level sets since, in Appendix \ref{sec:density}, we show how to generalise Green and Tao's method  to give a simpler proof of Theorem \ref{thm:poly-log-density}. This adaptation involves replacing the positivity property associated to four-term progressions with an application of the Croot-Lev-Pach polynomial method \cite{croot2017progression,ellenberg2017large}. This is the first argument we are aware of which demonstrates how the polynomial method is of use in the study of configurations of complexity two (quadratic Fourier analysis). 

Present limitations of the polynomial method\footnote{Namely, the polynomial method cannot presently handle an arbitrary translation-invariant system of complexity one.} mean that the following conjecture, which is an attempt at formulating Theorem \ref{thm:poly-log-density} in maximal generality, is out of reach of  Green and Tao's method as detailed in Appendix \ref{sec:density}. However, we believe this conjecture {is} provable on combining remarkable work of Manners \cite{manners2021true}  with the density increment approach of the present paper, an approach which makes essential use of our relative $U^3$-inverse theorem. 

\begin{conjecture}[Polylog bound for \text{any} translation-invariant configuration of complexity 2]\label{conj:complex-2}
Let $\phi_1$, \dots, $\phi_k$ be a translation-invariant system (see Definition \ref{def:translation-invariant}) of $\F_p$-linear maps $\F_p^n \to \F_p^n$  of complexity at most two (see Definition \ref{def:complex-two}). Let $\psi_1, \dots, \psi_s$ denote another system of non-zero $\F_p$-linear maps $\F_p^n \to \F_p^n$. Suppose that $A \subset \F_p^n$ has the following property: there exist constants $c_1, \dots, c_s \in \F_p^n$ such that if $x_1, \dots, x_d \in \F_p^n$ satisfy
$$
\phi_1(x_1, \dots, x_d),\ \dots,\ \phi_k(x_1, \dots, x_d)\in A
$$
then this configuration is ``$\psi$-trivial'' in that some $\psi_i(x_1, \dots, x_d)$ equals $c_i$. Then the density of $A$ satisfies $|A|/p^n \ll_{p,d,k,s} n^{-\Omega_{p,d,k,s}(1)}$.
\end{conjecture}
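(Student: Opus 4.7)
The plan is to run a density increment argument with respect to high-rank quadratic level sets, mirroring Bourgain's use of Bohr sets for three-term progressions, with Theorem \ref{thm:u3-inverse} playing the role of the Fourier $L^\infty$ increment step. Each iteration maintains a quadratic level set $B = Q^{-1}(0) \subset \F_p^n$ of high rank, on which a suitable translate of $A$ has relative density $\delta' \geq \delta$; the goal of the iteration is either to obtain a density increment to $\delta' + \delta'^{O(1)}$ on a quadratic refinement $B' \subsetneq B$ cut out by one additional quadratic, or to terminate in finitely many steps by contradiction with $\delta' \leq 1$.

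The key input to each iteration is a relative generalised von Neumann inequality: since the $\phi$-system has complexity at most two, the count of $\phi$-configurations inside $A$ relative to $B$ should be controlled, up to a polynomial loss in $\delta'$, by $\norm{f}_{U^3}/\norm{1_B}_{U^3}$, where $f := 1_A - \delta' 1_B$ is the balanced function on $B$. Since the $\psi$-trivial configurations form a positive-codimension subvariety of the space of $\phi$-configurations in $B$, they contribute a negligible fraction of the trivial count; hence the hypothesis that every $\phi$-configuration in $A$ is $\psi$-trivial forces $\norm{f}_{U^3}$ to be large relative to $\norm{1_B}_{U^3}$. This is precisely the sort of quantitative relative true-complexity statement made accessible by the methods of \cite{manners2021true}; transferring it from $\F_p^n$ to sparse quadratic level sets with polynomial bounds is where the bulk of the technical work lies.

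Armed with $\norm{f}_{U^3} \gg \delta'^{O(1)} \norm{1_B}_{U^3}$, we invoke Theorem \ref{thm:u3-inverse}. In the low-rank alternative, a Gaussian-elimination clean-up of $Q$ combined with restriction to a coset on which the offending high-rank components are constant reduces the problem to one with fewer defining quadratics, costing at most a polylogarithmic factor in density. In the correlation alternative, a quadratic phase $x \mapsto e^{2\pi i q(x)/p}$ correlates with $f$ on $B$ with strength $\delta'^{O(1)} |B|$; pigeonholing over the level sets of $q$ then yields the desired relative density increment on $B \cap q^{-1}(t)$ for some $t \in \F_p$. Iterating at most $\delta^{-O(1)}\log(1/\delta)$ times drives the relative density above $1$, yielding the claimed polylogarithmic upper bound on $\delta$.

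The principal obstacle is the first step: proving a relative generalised von Neumann inequality on sparse quadratic level sets with \emph{polynomial} (rather than merely qualitative) dependence on $\delta'$. In the ambient setting of $\F_p^n$, Manners' quantitative true-complexity theorem handles this; a plausible transfer is to parametrise $B$ (or the product of copies of $B$ relevant to the configuration) by a quadratic map and invoke the counting lemmas available in that setting, then pull back. A secondary difficulty is rank management: each iteration enlarges the defining tuple $Q$, so one must periodically purge low-rank combinations via the low-rank alternative of Theorem \ref{thm:u3-inverse} to prevent the quadratic level set from degenerating. Resolving these two issues in tandem should yield the conjecture.
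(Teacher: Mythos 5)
The statement you have been asked to prove is, in the paper, explicitly a \emph{conjecture}: the author does not prove it, but remarks immediately after its statement that they ``believe this conjecture is provable on combining remarkable work of Manners \cite{manners2021true} with the density increment approach of the present paper, an approach which makes essential use of our relative $U^3$-inverse theorem.'' Your proposal reproduces that suggested strategy faithfully: a density increment on high-rank quadratic level sets, powered by a relative generalised von Neumann inequality and Theorem \ref{thm:u3-inverse}. So as a sketch of an intended route, you are aligned with the author. But neither you nor the paper has a proof, and you have been candid about this.

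The genuine gap is exactly where you locate it. The paper supplies a relative generalised von Neumann inequality only for the specific Brauer system (Lemma \ref{thm:brauer-inverse}), proved by hand with a sequence of Cauchy--Schwarz applications that exploits the particular shape of the Brauer pattern, together with the cleaning-up lemmas of \S\ref{sec:high-rank} (Corollaries \ref{cor:quad-level-size}, \ref{cor:pseud-once-diff}, \ref{cor:pseud-thrice-diff}). Nothing in the paper upgrades Manners' Theorem \ref{thm:true} to a statement relative to sparse quadratic level sets, and that upgrade is nontrivial: Manners' iterated Cauchy--Schwarz scheme produces, at each stage, majorants that must be compared against the dense ambient group, and replacing that ambient group by a sparse level set requires the kind of pseudorandomness bookkeeping (tracking which linear constraints survive each differencing, and that the resulting varieties remain pseudorandom) that the paper works out by hand only for the Brauer configuration. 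Your suggestion to ``parametrise $B$ by a quadratic map and pull back'' is not obviously sound: $Q^{-1}(0)$ is not an affine subspace, so such a parametrisation does not exist in general, and even a surjection from a parameter space would not convert the sparse problem into a dense one without loss.

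Two smaller points. First, the rank-management concern you raise is in fact resolved in the paper: Lemma \ref{lem:high-rank-partitioning} shows how to pass to a further subspace and drop low-rank quadratics while preserving the maximal level set density, so you should cite that rather than leaving it as an open issue. Second, your claim that $\psi$-trivial configurations contribute a negligible fraction ``since they form a positive-codimension subvariety'' does require an argument on the level set $B$: one needs to count $\phi$-configurations in $B\cap\psi_i^{-1}(c_i)$ and show this is $o$ of the total count in $B$, which again uses the high-rank pseudorandomness machinery (Lemma \ref{lem:weyl-bound} and its consequences) and is not purely dimension-counting. This is manageable but should be stated as a lemma to be proved, not as an obvious geometric fact. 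In summary: the strategy is the right one and matches the paper's intent, but it is a program, not a proof; the missing ingredient --- a quantitatively strong, relative, translation-invariant complexity-two generalised von Neumann theorem on high-rank quadratic level sets --- is a substantial piece of new mathematics that neither you nor the paper supplies.
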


In the process of proving our $U^3$-inverse theorem relative to quadratic level sets (Theorem \ref{thm:u3-inverse}), we prove the following $U^2$-inverse theorem relative to pseudorandom sets. A quantitative variant of this result was used by Peluse (see \cite[\S6]{peluse2022subsets}), who proved it via the transference principle and thereby obtained weaker bounds. Peluse \cite{peluse2022bcc} posed the problem of obtaining  a direct Fourier-analytic proof of this fact, given that such an approach is likely more quantitatively efficient. This we accomplish in the following.
\begin{theorem}[$U^2$-inverse theorem on  Fourier uniform sets]\label{thm:u2-inverse}
Let $G$ be a finite abelian group. Suppose that $\Phi \subset G$ is a set of density $\delta := |\Phi|/|G|$ which satisfies the Fourier uniformity estimate 
$$
\max_{\gamma \neq 1} \Bigabs{\sum_{x \in \Phi} \gamma(x)} \leq \delta^{2} |\Phi|,
$$ 
where the maximum is taken over all non-trivial characters on $G$ (homomorphisms to the multiplicative group of complex numbers on the unit circle, a set denoted $\hat G$).
Then for any 1-bounded function $f : G \to \C$ with support $\set{x : f(x) \neq 0}$ contained in $ \Phi$, we have the implication:
$$
 \norm{f}_{U^2} \geq \eta \norm{1_\Phi}_{U^2} \quad \implies \quad \exists \gamma \in \hat G \quad \Bigabs{\sum_{x\in \Phi} f(x)\gamma(x)} \gg \eta^4 |\Phi|.
$$
\end{theorem}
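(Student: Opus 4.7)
The natural approach is Fourier-analytic. Plancherel gives $\|f\|_{U^2}^4 = |G|^{-1}\sum_\gamma |\hat f(\gamma)|^4$ and similarly for $1_\Phi$, where $\hat f(\gamma) := \sum_x f(x)\overline{\gamma(x)}$. The Fourier uniformity hypothesis on $\Phi$ forces the main term in $\|1_\Phi\|_{U^2}^4$ to come from the trivial character, giving $\|1_\Phi\|_{U^2}^4 = (1+O(\delta^3))|\Phi|^4/|G|$. The hypothesis $\|f\|_{U^2} \geq \eta\|1_\Phi\|_{U^2}$ then translates to the Fourier $L^4$ lower bound $\sum_\gamma |\hat f(\gamma)|^4 \gg \eta^4 |\Phi|^4$.

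The naive combination with the Plancherel bound $\sum_\gamma|\hat f(\gamma)|^2 = |G|\|f\|_2^2 \leq |G||\Phi|$ through the elementary inequality $\sum|\hat f|^4 \leq \max_\gamma|\hat f(\gamma)|^2 \cdot \sum_\gamma|\hat f|^2$ yields only $\max_\gamma |\hat f(\gamma)| \gg \eta^2\sqrt\delta|\Phi|$, which falls short of the claim when $\delta \ll \eta^4$. Closing this gap is the heart of the proof and is where the pointwise Fourier uniformity of $\Phi$ must be used.

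To close it, I would exploit the support constraint $f = f\cdot 1_\Phi$ via its Fourier-side avatar $(1-\delta)\hat f(\gamma) = |G|^{-1}\sum_{\gamma' \neq 0}\hat f(\gamma-\gamma')\hat 1_\Phi(\gamma')$, combining it with the pointwise bound $|\hat 1_\Phi(\gamma')| \leq \delta^2|\Phi|$ (for $\gamma' \neq 0$) through a careful Cauchy--Schwarz or Young-type inequality on the right-hand side. The plan is that this should produce an improved $L^\infty$-bound on $\hat f$ in terms of $\|f\|_2$, which, fed back into the $L^4$ lower bound above and combined with the trivial $\|f\|_2^2 \leq |\Phi|$, should rearrange to the desired $\max_\gamma|\hat f(\gamma)| \gg \eta^4|\Phi|$.

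The principal obstacle is that the most direct Cauchy--Schwarz on the convolution identity collapses to the Plancherel identity $\sum|\hat 1_\Phi|^2 = |G||\Phi|$, discarding the pointwise $\delta^2$-saving from Fourier uniformity and returning the naive estimate. Extracting the full saving seems to require a more refined estimate, for instance by iterating the fixed-point identity so a geometric-series-type gain accumulates, or by applying Young's inequality with fractional exponents tuned to the sparse regime $\delta \ll \eta^4$. This is precisely the delicate $L^\infty$-versus-$L^2$ tension that transference-based approaches (such as that of Peluse in \cite[\S6]{peluse2022subsets}) sidestep at the cost of quantitative efficiency, and which any direct Fourier-analytic proof must confront head-on.
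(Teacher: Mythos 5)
The setup of your proposal is correct: the translation of the hypothesis to the Fourier-$L^4$ lower bound $\sum_\gamma |\hat f(\gamma)|^4 \gg \eta^4|\Phi|^4$ is right, and you have correctly identified that the naive chain $\sum|\hat f|^4 \leq \|\hat f\|_\infty^2 \cdot \sum|\hat f|^2$ with Parseval falls short by a factor of $\sqrt{\delta}/\eta^2$. However, the heart of the matter --- closing that gap --- is left as a sketch in your proposal, and the two routes you propose (iterating the convolution fixed-point identity, or a fractional-exponent Young inequality) are speculative. You explicitly acknowledge that direct Cauchy--Schwarz applied to the Fourier-side convolution identity degenerates back to Parseval, and you do not show how to avoid that degeneracy. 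As written, this is a genuine gap.

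The paper's actual mechanism (Lemma~\ref{lem:spec-estimate}, then Theorems~\ref{thm:restriction} and~\ref{thm:restriction-inverse}) is different in a way that matters. Rather than applying Cauchy--Schwarz on the dual side to the convolution identity, one applies it on the physical side to the tautology $K|S| \leq \sum_{\gamma\in S}|\hat f(\gamma)|$ where $S=\Spec(f,K)$: introduce signs $\omega(\gamma)$, swap sums so the inner sum runs over $x\in\Phi$ (this is where $\supp f\subset\Phi$ is used), and Cauchy--Schwarz in $x$ to obtain
$$K^2|S|^2 \leq \norm{f}_2^2\sum_{\gamma_1,\gamma_2\in S}\bigabs{\hat 1_\Phi(\gamma_1\overline{\gamma_2})}.$$
Now the diagonal contributes $|S||\Phi|$ and the off-diagonal contributes at most $\eps|S|^2|\Phi|$ by the pointwise Fourier uniformity of $\Phi$, giving the level-set bound $|\Spec(f,K)| \ll |\Phi|\norm{f}_2^2/K^2$ \emph{with no factor $|G|/|\Phi|$} --- this is exactly the missing saving over Parseval. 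Feeding this into a dyadic decomposition produces a restriction estimate $\E_\gamma|\hat f(\gamma)|^3 \ll |\Phi|^3|G|^{-1}$, and then extracting one power of $\|\hat f\|_\infty$ from the fourth moment via $\E_\gamma|\hat f|^4 \leq \|\hat f\|_\infty\,\E_\gamma|\hat f|^3$ yields $\|\hat f\|_\infty \gg \eta^4|\Phi|$ directly. The crucial point is that the Cauchy--Schwarz is applied so as to create a \emph{product of two characters} inside a sum over the sparse set $\Phi$, turning the pointwise uniformity bound into a useful off-diagonal estimate; your dual-side Cauchy--Schwarz does not create such a structure and thus cannot see the $\delta^2$-saving.
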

\subsection{Paper organisation}
%
We prove Theorem \ref{thm:u2-inverse} in \S\ref{sec:u2-inverse}, where we establish various properties of Fourier uniform sets. In \S\ref{sec:high-rank} we state and prove a number of results concerning high-rank quadratic level sets; results we use throughout the remainder of the paper. In \S\S\ref{sec:many-partial}, \ref{sec:many-genuine}, \ref{sec:sym} we carry out the proof of our relative $U^3$-inverse theorem, which follows the strategy of Gowers \cite{GowersNewFour} and Green-Tao \cite{GreenTaoInverse}, but with a number of new technical ideas required to work relative to quadratic level sets. In \S\S\ref{sec:ramsey-bound}, \ref{sec:part-high}, \ref{sec:sparsity-expansion-density-increment} we demonstrate how to run density increment arguments relative to quadratic level sets,  bounding the Ramsey number of Brauer quadruples. In Appendix \ref{sec:density}, we show how to generalise an argument  of Green and Tao \cite{GreenTaoNewIa} in order to prove Theorem \ref{thm:poly-log-density}

\subsection{Acknowledgements} Part of this work was carried out whilst  participating in the workshop \textit{``High-dimensional phenomena in discrete analysis''} at the American Institute of Mathematics, whose hospitality the author gratefully acknowledges. Thanks to  James Leng, Freddie Manners, Sarah Peluse, Mehtaab Sawhney and Terence Tao for informative conversations. Thanks in particular to Tamar Ziegler for their insight and encouragement.

\subsection{Notation}\label{sec:notation}
 Given an abelian group $G$, let $\hat{G}$ denote the \textit{dual group} of \textit{characters} (homomorphisms from $G$ to the multiplicative group of complex numbers on the unit circle). For  a function $f : G \to \C$ and $\gamma \in \hat{G}$ we define the \textit{Fourier transform} of $f$ at $\gamma$ to be the inner product of $f$ with $\gamma$. We normalise inner products on $G$ so that this is
\begin{equation}\label{eq:fourier-transform}
\hat{f}(\gamma) :=  \sum_{x \in G} f(x) \overline{\gamma(x)}.
\end{equation}
Notice that with this normalisation $\norm{f}_2 :=\brac{ \sum_{x \in G} |f(x)|^2}^{1/2}$ and more generally $\norm{f}_q := \brac{ \sum_{x \in G} |f(x)|^q}^{1/q}$ for $q \in (0, \infty)$.
We use the uniform probability measure on $\hat{G}$, so that 
\begin{equation}\label{eq:Lp-norm-dual}
\normnorm{\hat{f}}_{q} := \brac{\E_{\gamma \in \hat{G}} |\hat{f}(\gamma)|^q}^{\recip{q}}.
\end{equation}
Here the operator $\E$ is defined, for a finite  non-empty set $S$ and function $g:S\to\C$, via
\[
\E_{x\in S}\ g(x):=\frac{1}{|S|}\sum_{x\in S}g(x).
\]
We define the support of $f : G \to \C$ to be
\begin{equation}\label{eq:supp}
\supp(f):= \set{x \in G  : f(x) \neq 0}.
\end{equation}
We say that $f$ is \emph{1-bounded} if the modulus of the function does not exceed 1.

Throughout $H$ will be used to denote a finite vector space over $\F_p$ where $\F_p$ is the field with $p$ elements and $p$ is an odd prime. In this case, there is a group isomorphism between $\hat{H}$ and the \textit{dual vector space} $H^*$ which consists of all $\F_p$-linear maps $\ell : H \to \F_p$ (which we also call \textit{linear forms}). One such isomorphism takes the linear form $h \mapsto \ell h$ to the character $\gamma_\ell : h \mapsto e_p(-\ell h)$.  We will often abuse notation and write $\hat{f}(\ell)$ for $\hat{f}(\gamma_\ell)$, so that
$$
\hat{f}(\ell) = \sum_{x \in H} f(x)e_p(\ell x).
$$

A \textit{quadratic polynomial} on $H$ is a map of the form $x \mapsto b(x,x) + \ell x + c$, where $b: H\times H \to \F_p$ is a bilinear form, $\ell : H \to \F_p$ is a linear form and $c \in \F_p$ is a constant. The \textit{homogeneous quadratic part} of this polynomial is $b(x,x)$, the \textit{homogeneous linear part} is $\ell x$ and an \textit{associated bilinear form} $(x,y) \mapsto \tilde b(x,y)$ satisfies $\tilde b(x,x) = b(x,x)$ for all $x \in H$. When $p$ is odd, we may assume that any quadratic polynomial $x \mapsto b(x,x) + \ell x + c$ has a unique associated bilinear form which is  \textit{symmetric}, so that $b(x,y) = b(y,x)$ for all $x, y \in H$.  A \textit{(homogeneous) quadratic form} is a quadratic polynomial with zero homogeneous linear part and zero constant term.  For a bilinear form $b : H \times H \to \F_p$ and fixed $x \in H$, we have $b(x, \cdot) \in H^*$ (the dual vector space defined in the previous paragraph);  to save on parentheses, let us denote this element of $H^*$ by $x^Tb$. Similarly, let $by$ denote the element of $H^*$ given by $b(\cdot,y)$. Notice that $b$ is symmetric if and only if $x^Tb = bx$ for all $x \in H$. Given a $d$-tuple $B = (b_1, \dots, b_d)$ of  bilinear forms on $H$, we write $B(x,y)$ or $x^TBy$ for the $d$-tuple $(x^Tb_1y, \dots, x^Tb_dy) \in \F_p^d$. Similarly, define the $d$-tuple $x^TB= (x^Tb_1, \dots,  x^Tb_d) \in (H^*)^d$ and $By = (b_1y, \dots, b_dy) \in (H^*)^d$. For $L = (\ell_1, \dots, \ell_d) \in (H^*)^d$ we write $\ang{L} = \ang{\ell_1, \dots, \ell_d}$ for the $\F_p$-span of the $d$ elements in the coordinates of $L$; hence $\ang{L}$ forms a subspace of $H^*$ of dimension at most $d$. Given a subspace $U$ of $H$ write
$
U^\perp := \set{\ell \in H^* : \ell u  = 0 \ (\forall u \in U)}
$. By rank-nullity, this forms a subspace of $H^*$ of codimension $\dim U$. Similarly, given a subspace $V$ of $H^*$ write
$
V^\perp := \set{h \in H : \ell h  = 0 \ (\forall \ell \in V)},
$ a subspace of $H$ of codimension $\dim V$.
Since $V \subset (V^\perp)^\perp$ and these spaces have the same finite dimension, we see that $(V^\perp)^\perp = V$.

For a function $f$ and positive-valued function $g$, write $f \ll g$ or $f = O(g)$ if there exists a constant $C$ such that $|f(x)| \le C g(x)$ for all $x$. We write $f = \Omega(g)$ (or $f \gg g$) if $g\ll f$.  If $f\ll g$ and $f\gg g$ then we write $f \asymp g$. Possible dependence of implicit constants on other parameters is indicated with subscripts.

\section{A restriction estimate and $U^2$-inverse theorem on Fourier uniform sets}\label{sec:u2-inverse}
Key to our approach to the relative $U^3$-inverse theorem (Theorem \ref{thm:u3-inverse}) is the fact that high-rank quadratic level sets are pseudorandom with respect to the $U^2$-norm (``Fourier uniformity''). In this section we establish  certain properties of Fourier uniform sets which are used repeatedly in the remainder of the paper. Namely, for functions defined on such sets: we estimate their spectra, show that they satisfy a restriction estimate and prove the $U^2$-inverse theorem previously advertised (Theorem \ref{thm:u2-inverse}).
\begin{definition}[Fourier uniform set]\label{def:fourier-uniform}
Given a finite abelian group $G$ and $\Phi \subset G$, we say that $\Phi$ is \textit{$\eps$-Fourier uniform} on $G$ if
$$
\bignorm{\hat{1}_\Phi - \tfrac{|\Phi|}{|G|} \hat{1}_G}_\infty \leq \eps \normnorm{\hat{1}_\Phi}_\infty.
$$
Here we write $\norm{f}_\infty$ for $\sup_\gamma |f(\gamma)|$, which is a maximum when the underlying domain is finite. We note that (with our notational conventions) $\normnorm{\hat{1}_\Phi}_\infty = |\Phi|$ and 
$$
\hat{1}_G(\gamma) = \begin{cases} |G| & \text{ if $\gamma$ is the trivial character;}\\
						0 & \text{ otherwise.}\end{cases}
$$
Hence an equivalent formulation of $\eps$-Fourier uniformity in this context is that
$$
\max_{\gamma\neq 1} \Bigabs{\sum_{x \in \Phi} \gamma(x)} \leq \eps |\Phi|,
$$
which for $G= H \leq \F_p^n$ can be written as
$\displaystyle
\max_{h\in H\setminus\set{ 0}} \Bigabs{\sum_{x \in \Phi} e_p(hx)} \leq \eps |\Phi|.
$
\end{definition}
\begin{lemma}[Spectral estimate]\label{lem:spec-estimate}
Let $G$ be a  finite abelian group and let $\Phi \subset G$ be $\eps$-Fourier uniform (see Definition \ref{def:fourier-uniform}). For $f : G \to \C$ define the large spectrum
$$
\Spec(f, K) := \bigset{\gamma \in \hat{G} : \bigabs{\hat{f}(\gamma)}\geq K}
$$
and let us normalise $L^2$ with counting measure
\begin{equation}\label{eq:L2}
\norm{f}_2 := \Bigbrac{\sum_{x\in G} |f(x)|^2}^{1/2}.
\end{equation}
If $\supp(f) \subset \Phi$, then for any $K \geq (2 \eps |\Phi|)^{1/2} \norm{f}_2$ we have
$$
\abs{\Spec(f,K )} \leq \frac{2|\Phi|\norm{f}_2^{2}}{K^2}.
$$
In particular, if $f$ is 1-bounded then for any $\delta \geq (2\eps)^{1/2}$ we have
$
\abs{\Spec(f, \delta |\Phi|)} \leq 2 \delta^{-2}.$
\end{lemma}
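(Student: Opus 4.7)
The plan is a textbook restriction-style dualisation. Enumerate $\Spec(f,K) = \set{\gamma_1,\dots,\gamma_M}$ and attach to each $\gamma_i$ the unit-modulus coefficient $c_i := \overline{\hat f(\gamma_i)}/\abs{\hat f(\gamma_i)}$, so that by construction $\sum_{i} c_i \hat f(\gamma_i) = \sum_{i}\abs{\hat f(\gamma_i)} \geq MK$. Expanding the definition of $\hat f$ and using $\supp f \subset \Phi$ to restrict the resulting $x$-sum to $\Phi$, Cauchy--Schwarz yields
\[
(MK)^2 \;\leq\; \norm{f}_2^2 \sum_{x\in\Phi} \Bigabs{\sum_{i=1}^{M} c_i\,\overline{\gamma_i(x)}}^2.
\]

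Next I would expand the inner square and swap the order of summation, which rewrites the right-hand side as $\sum_{i,j} c_i\bar c_j\,\hat 1_\Phi(\gamma_i\gamma_j^{-1})$. The diagonal $i=j$ contributes exactly $M\abs{\Phi}$, while for $i\neq j$ the character $\gamma_i\gamma_j^{-1}$ is non-trivial, so $\eps$-Fourier uniformity (Definition \ref{def:fourier-uniform}) bounds each off-diagonal term in modulus by $\eps\abs{\Phi}$. Collecting terms produces $MK^2 \leq \norm{f}_2^2\,\abs{\Phi}\,(1 + M\eps)$, and the hypothesis $K^2 \geq 2\eps\abs{\Phi}\norm{f}_2^2$ is precisely what is needed to absorb the $M\eps$-piece into half of the $K^2$, yielding $M \leq 2\abs{\Phi}\norm{f}_2^2/K^2$.

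For the ``in particular'' clause, $1$-boundedness together with $\supp f \subset \Phi$ forces $\norm{f}_2^2 \leq \abs{\Phi}$; with $K=\delta\abs{\Phi}$ the main hypothesis reduces to $\delta \geq (2\eps)^{1/2}$, and the spectral bound becomes $2\delta^{-2}$. I do not anticipate a real obstacle here: the argument is a baby restriction estimate in the spirit of Chang's theorem. The only care needed is to stay consistent with the paper's Fourier normalisation, so that the diagonal reads $\hat 1_\Phi(1) = \abs{\Phi}$ and that $\eps$-Fourier uniformity really applies uniformly to each of the (non-trivial) characters $\gamma_i\gamma_j^{-1}$ produced by the expansion.
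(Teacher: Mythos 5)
Your proof is correct and follows essentially the same route as the paper: dualise by attaching a unit-modulus weight to each $\gamma\in\Spec(f,K)$, restrict to $\Phi$ and apply Cauchy--Schwarz, expand into a double character sum, split diagonal from off-diagonal, and invoke $\eps$-Fourier uniformity to control the off-diagonal block before rearranging under the hypothesis $K^2\geq 2\eps\abs{\Phi}\norm{f}_2^2$. The only cosmetic difference is that the paper asserts the existence of a $1$-bounded modulating function $\omega$ rather than writing down the phases $c_i$ explicitly.
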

When $f$ is 1-bounded with support in $\Phi$, Parseval's identity immediately yields that 
$$|\Spec(f, \delta|\Phi|)| \leq \delta^{-2} |G|/|\Phi|.$$ The utility in the spectral estimate is that, when $\Phi$ is pseudorandom, we can remove the reciprocal of the density of $\Phi$ from the Parseval bound.
\begin{proof}
Write $S := \Spec(f,K)$. Then we have that
$
K \abs{S} \leq \sum_{\gamma  \in S} \bigabs{\hat{f}(\gamma)}
$. 
Hence there exists a 1-bounded function $\omega : S \to \C$ with
\begin{multline*}
K \abs{S} \leq \sum_{\gamma  \in S} \hat{f}(\gamma) \omega(\gamma) = \sum_{x\in \Phi} f(x)\sum_{\gamma\in S}\overline{\gamma(x)}\omega(\gamma)
	\leq \norm{f}_2\Bigbrac{\sum_{x\in \Phi}\sum_{\gamma_1, \gamma_2 \in S} \overline{\gamma_1}\gamma_2(x)\omega(\gamma_1)\overline{\omega(\gamma_2)}}^{1/2} .
\end{multline*}
Therefore
$$
K^2 \abs{S}^2 \leq  \norm{f}_2^2\sum_{\gamma_1,\gamma_2\in S}\abs{\hat{1}_\Phi(\gamma_1\overline{\gamma_2})}.
$$
We break the above sum into the diagonal contribution (when $\gamma_1 = \gamma_2$) and the remainder (when $\gamma_1\overline{\gamma_2}$ is a non-constant character). For the latter we use our $\eps$-Fourier uniformity estimate to deduce that
$$
K^2 \abs{S}^2 \leq  \norm{f}_2^2 \brac{|S||\Phi| + |S|^2\eps|\Phi| }.
$$
Re-arranging, this gives
$
K^2|S| \leq  \norm{f}_2^2|\Phi|\brac{1 +\eps |S|}.
$
\end{proof}
\begin{theorem}[Restriction estimate on Fourier uniform sets]\label{thm:restriction}
Let $G$ be a finite abelian group and let $\Phi \subset G$ be $\eps$-Fourier uniform (see Definition \ref{def:fourier-uniform}). Then for any $q > 2$ and any $f : G \to \C$ with $\supp(f) \subset \Phi$ we have that\footnote{Regarding normalisations, see \eqref{eq:fourier-transform} for the Fourier transform and \eqref{eq:L2} for the $L^2$-norm.}
$$
\E_{\gamma \in \hat{G}} |\hat{f}(\gamma)|^q \ll_q \norm{f}_2^q|\Phi|^{\frac{q}{2}}\brac{ |G|^{-1} + \eps^{\frac{q}{2}-1}|\Phi|^{-1}}.
$$
In particular, if $f$ is 1-bounded and $\eps \leq \brac{|\Phi|/|G|}^{\frac{2}{q-2}}$ (so that the level of Fourier uniformity of $\Phi$ is sufficiently small in terms of its density),  then 
$$
\E_{\gamma \in \hat{G}} \bigabs{\hat f(\gamma)}^q \ll_q |\Phi|^q|G|^{-1}. 
$$
\end{theorem}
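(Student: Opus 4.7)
The plan is to combine the layer-cake representation of $L^q$-norms with the spectral estimate of Lemma \ref{lem:spec-estimate}, using Parseval's identity to control $\Spec(f,K)$ at moderate heights and the enhanced Fourier-uniform bound at large heights. The pseudorandomness of $\Phi$ enters precisely at the latter step, saving a factor of $|G|/|\Phi|$ compared with Parseval alone.

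Concretely, I would write
$$
\sum_{\gamma \in \hat G} \bigabs{\hat f(\gamma)}^q = q\int_0^\infty K^{q-1} \bigabs{\Spec(f,K)} \, dK
$$
and note that the integrand vanishes for $K > \norm{f}_1$, which is at most $|\Phi|^{1/2}\norm{f}_2$ by Cauchy-Schwarz on $\supp(f) \subset \Phi$. Setting $K_0 := (2\eps|\Phi|)^{1/2}\norm{f}_2$ (the threshold above which Lemma \ref{lem:spec-estimate} activates) I would split the integral into three ranges. On $[0, \norm{f}_2]$ use the trivial bound $|\Spec(f,K)| \leq |G|$; on $[\norm{f}_2, K_0]$ use Parseval to obtain $|\Spec(f,K)| \leq |G|\norm{f}_2^2/K^2$; on $[K_0, |\Phi|^{1/2}\norm{f}_2]$ invoke Lemma \ref{lem:spec-estimate} to get $|\Spec(f,K)| \leq 2|\Phi|\norm{f}_2^2/K^2$. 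Since $q > 2$, each resulting $K^{q-3}$ integral is dominated by its upper limit, and the three contributions evaluate to $|G|\norm{f}_2^q$, $O_q\bigbrac{|G|\norm{f}_2^q \eps^{q/2-1}|\Phi|^{q/2-1}}$ and $O_q\bigbrac{|\Phi|^{q/2}\norm{f}_2^q}$ respectively. Dividing by $|\hat G| = |G|$ yields the first claimed bound, provided the trivial contribution $\norm{f}_2^q$ can be absorbed into the middle one; this is ensured by the fact that Fourier uniformity of a non-trivial $\Phi$ forces $\eps|\Phi| \gtrsim 1$ (else Parseval applied to $1_\Phi$ would fail).

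The second, more concrete claim follows by specialising to $1$-bounded $f$ (so $\norm{f}_2^q \leq |\Phi|^{q/2}$) and invoking the hypothesis $\eps \leq (|\Phi|/|G|)^{2/(q-2)}$, which rearranges to $\eps^{q/2-1}|\Phi|^{-1} \leq |\Phi|^{q/2-1}/|G|$ and so collapses the parenthesised sum to $|\Phi|^{-1}\cdot|\Phi|/|G|$ up to constants, producing the clean bound $|\Phi|^q/|G|$. The main technical point is the bookkeeping of the three $K$-ranges and verifying that the trivial-bound contribution $\norm{f}_2^q$ does not leak into the final estimate; as noted, Fourier uniformity of any non-empty $\Phi$ automatically enforces $\eps|\Phi|\gtrsim 1$, dispensing with this worry.
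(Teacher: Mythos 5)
Your layer-cake decomposition is the continuous analogue of the paper's dyadic decomposition, and the substance of the argument — trivial bound at the bottom, Parseval in the middle, the spectral estimate of Lemma \ref{lem:spec-estimate} at the top — is essentially equivalent to what the paper does, so this is largely the same proof in different clothing. The split into $[0,\norm{f}_2]$, $[\norm{f}_2,K_0]$, $[K_0,|\Phi|^{1/2}\norm{f}_2]$ with $K_0 = (2\eps|\Phi|)^{1/2}\norm{f}_2$, and the evaluation of each $K^{q-3}$ integral by its upper endpoint (valid since $q>2$), are all fine, as is the derivation of the ``in particular'' clause.

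The one genuine gap is the closing claim that Fourier uniformity of a nonempty $\Phi$ forces $\eps|\Phi|\gtrsim 1$, which you use to absorb the trivial-range contribution $\norm{f}_2^q$. This is false as stated: $\Phi = G$ is $\eps$-Fourier uniform for \emph{every} $\eps\geq 0$, including $\eps=0$, and more generally a dense $\Phi$ imposes only the weak constraint $\eps^2|\Phi| > 1 - |\Phi|/|G|$ (from Parseval applied to $1_\Phi$), which carries no information when $|\Phi|$ is close to $|G|$. The correct way to absorb $\norm{f}_2^q$ is a two-case argument: if $|\Phi|\leq |G|/2$, then Parseval does give $\eps^2|\Phi|>\tfrac12$, whence $\eps|\Phi|\gg 1$ (treating $\eps\leq 1$ and $\eps>1$ separately) and the middle term absorbs it; if instead $|\Phi|>|G|/2$, then $|\Phi|^{q/2}|G|^{-1} > 2^{-q/2}|G|^{q/2-1}\gg_q 1$ and the \emph{first} term absorbs it. Note that the paper's formulation avoids this bookkeeping altogether by running the dyadic sum from the top ($|\hat f|\asymp |\Phi|^{1/2}\norm{f}_2$) downwards and placing the Parseval bound on the low tail; there is then no separate ``trivial range'' contribution needing absorption, so no auxiliary claim about $\eps|\Phi|$ is invoked.
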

\begin{proof}
Re-normalising, suppose that $\norm{f}_2 \leq 1$. With this normalisation, the Cauchy-Schwarz inequality yields that
$
\normnorm{\hat f}_\infty \leq |\Phi|^{1/2}
$, hence we may divide our sum over $\hat{G}$ into the following dyadic spectra 
$$
\E_\gamma \bigabs{\hat{f}(\gamma)}^q = |G|^{-1} \sum_{j=1}^J \sum_{\frac{|\Phi|^{1/2}}{2^{j}} < |\hat{f}(\gamma)| \leq 2\frac{|\Phi|^{1/2}}{2^j} } \bigabs{\hat{f}(\gamma)}^q  + |G|^{-1}\sum_{ |\hat{f}(\gamma)| \leq \frac{|\Phi|^{1/2}}{2^J} } \bigabs{\hat{f}(\gamma)}^q.
$$
Notice that if $2^{2j+1} \leq 1/\eps$ then our spectral estimate (Lemma \ref{lem:spec-estimate}) gives that
$$
\bigabs{\bigset{\gamma : |\Phi|^{1/2}2^{-j} \leq |\hat{f}(\gamma)|}} \leq 2^{2j+1}.
$$
Choosing $J$ to be maximal so that $2^{2J+1}\leq 1/\eps $, we obtain
$$
\sum_{j=1}^J \sum_{\frac{|\Phi|^{1/2}}{2^{j}} \leq |\hat{f}(\gamma)| \leq \frac{2|\Phi|^{1/2}}{2^j} } \bigabs{\hat{f}(\gamma)}^q \leq \sum_{j=1}^J 2^{1+q}|\Phi|^{\frac{q}{2}} 2^{(2-q)j} \ll_q |\Phi|^{\frac{q}{2}} \sum_{j=1}^\infty  2^{(2-q)j} \ll_q |\Phi|^{\frac{q}{2}}.
$$
By maximality of $J$ we have 
$
2^{-(2(J+1)+1)} < \eps
$, equivalently $2^{-J} < (8\eps)^{1/2}$, hence
$$
\sum_{ |\hat{f}(\gamma)| \leq \frac{|\Phi|^{1/2}}{2^J} } \bigabs{\hat{f}(\gamma)}^q \leq (8\eps|\Phi|)^{\frac{q-2}{2}} \sum_{ \gamma } \bigabs{\hat{f}(\gamma)}^2 \leq (8\eps|\Phi|)^{\frac{q-2}{2}}  |G|.
$$
Putting all of this together gives 
$
\E_\gamma \bigabs{\hat{f}(\gamma)}^q \ll_q |\Phi|^{\frac{q}{2}}|G|^{-1} + \brac{\eps|\Phi|}^{\frac{q-2}{2}}. 
$
\end{proof}
The above restriction estimate (Theorem \ref{thm:restriction}) allows us to prove a $U^2$-inverse theorem relative to pseudorandom sets, as claimed  in Theorem \ref{thm:u2-inverse}. Since $\norm{f}_{U^2} = \normnorm{\hat{f}}_4$, this follows on taking $q = 4$ in the following more general result.
\begin{theorem}[Inverse theorem on Fourier uniform sets]\label{thm:restriction-inverse}
Let $G$ be a finite abelian group. Fix $q > 2$ and suppose that $\Phi \subset G$ is a set of density $\delta := |\Phi|/|G|$ which satisfies the Fourier uniformity estimate\footnote{We have not optimised exponents in this statement - they can likely be improved  if relevant to the application.}
$$
\max_{\gamma \in \hat G\setminus\set{ 1}} \Bigabs{\sum_{x \in \Phi} \gamma(x)} \leq \delta^{\frac{4}{q-2}} |\Phi|.
$$ 
Then for any 1-bounded function $f : G \to \C$ with $\supp(f) \subset \Phi$ we have the bound
\begin{equation}\label{eq:qth-moment}
\E_\gamma \bigabs{\hat f(\gamma)}^{q}  \ll_q \normnorm{\hat{f}}_\infty^{\frac{q-2}{2}}|\Phi|^{\frac{q+2}{2}}|G|^{-1}.
\end{equation}
In particular
$$
\frac{\normnorm{\hat{f}}_{q}}{ \normnorm{\hat{1}_\Phi}_q}\ll_q \biggbrac{\frac{\normnorm{\hat f}_\infty}{\normnorm{\hat{1}_\Phi}_\infty}}^{\frac{q-2}{2q}}.
$$
\end{theorem}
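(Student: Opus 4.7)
The plan is to derive the $q$-th moment bound \eqref{eq:qth-moment} by interpolating $\normnorm{\hat f}_q$ between $L^\infty$ and an intermediate $L^p$-norm, then invoking the restriction estimate (Theorem \ref{thm:restriction}) at that $p$. The Fourier uniformity hypothesis $\eps := \delta^{4/(q-2)}$ is calibrated precisely to match the hypothesis of Theorem \ref{thm:restriction} at one specific exponent.

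First I would apply the elementary pointwise bound $|\hat f(\gamma)|^{q-p} \leq \normnorm{\hat f}_\infty^{q-p}$, yielding
\[
\normnorm{\hat f}_q^q \;=\; \E_\gamma |\hat f(\gamma)|^{q-p} |\hat f(\gamma)|^p \;\leq\; \normnorm{\hat f}_\infty^{q-p}\normnorm{\hat f}_p^p
\]
for any $p \in (2,q)$. Then I would select $p = (q+2)/2$, so that $q - p = (q-2)/2$ matches exactly the exponent of $\normnorm{\hat f}_\infty$ demanded in \eqref{eq:qth-moment}. The key observation is that $\tfrac{2}{p-2} = \tfrac{4}{q-2}$ for this choice, so the hypothesised uniformity $\eps \leq \delta^{2/(p-2)}$ is exactly the hypothesis of Theorem \ref{thm:restriction} at exponent $p$. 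Applying that theorem (valid since $f$ is $1$-bounded and supported in $\Phi$) gives $\normnorm{\hat f}_p^p \ll_q |\Phi|^{(q+2)/2}/|G|$, and substitution proves \eqref{eq:qth-moment}.

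To deduce the ``in particular'' ratio bound, I would apply Theorem \ref{thm:restriction} to $1_\Phi$ itself to obtain the upper bound $\normnorm{\hat{1}_\Phi}_q^q \ll_q |\Phi|^q/|G|$, while the evaluation at the trivial character $\mathbf{1} \in \hat G$ supplies the matching lower bound $\normnorm{\hat{1}_\Phi}_q^q \geq |G|^{-1}|\hat{1}_\Phi(\mathbf{1})|^q = |\Phi|^q/|G|$ and the identification $\normnorm{\hat{1}_\Phi}_\infty = |\Phi|$. Dividing \eqref{eq:qth-moment} through by $\normnorm{\hat{1}_\Phi}_q^q$ and taking $q$-th roots then yields the stated corollary.

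The only subtle point is spotting the right interpolation exponent $p = (q+2)/2$; once one recognises that the Fourier uniformity assumption in this theorem is \emph{precisely} the hypothesis of Theorem \ref{thm:restriction} at this $p$, the whole argument collapses to a single line of H\"older combined with one invocation of the restriction estimate, and there is no further obstacle.
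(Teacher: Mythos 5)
Your proposal is correct and is essentially identical to the paper's proof: the paper also sets $q' = (q+2)/2$, invokes the restriction estimate at that exponent, extracts the remaining $(q-2)/2$ power as an $L^\infty$ factor, and deduces the ratio bound from the trivial-character lower bound $\normnorm{\hat{1}_\Phi}_q^q \geq |\Phi|^q/|G|$. (The upper bound you mention obtaining by applying the restriction estimate to $1_\Phi$ is not actually needed for the deduction, but its inclusion does no harm.)
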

\begin{proof}
Set $q' = \frac{2+q}{2} \in (2, q)$. Then by the restriction estimate (Theorem \ref{thm:restriction}), we have that
$$
\E_\gamma \bigabs{\hat f(\gamma)}^{q'}  \ll_q |\Phi|^{q'}|G|^{-1},
$$
provided that the level of Fourier uniformity of $\Phi$ is at most $\delta^{\frac{2}{q'-2}}= \delta^{\frac{4}{q-2}}$ (which we are indeed assuming). Extracting the $q-q' = (q-2)/2$ power of the largest Fourier coefficient from the $q$th moment  gives \eqref{eq:qth-moment}. Finally, by restricting to the trivial Fourier coefficient, we observe that
\begin{equation*}
\E_\gamma \abs{\hat{1}_\Phi(\gamma)}^q \geq |\Phi|^q|G|^{-1}.
\end{equation*}
Comparing upper and lower bounds gives the final claimed conclusion.
\end{proof}

\section{High-rank quadratic level sets}\label{sec:high-rank}
In this section we establish a number of properties of high-rank quadratic level sets, all of which will be useful in the proof of our relative $U^3$-inverse theorem (Theorem \ref{thm:u3-inverse}). In particular, we establish that these level sets are pseudorandom subsets of their ambient vector space.
\begin{definition}[Rank]
Let $H$ be a finite dimensional vector space. We say that a $d$-tuple of symmetric bilinear forms $B= (b_1, \dots, b_d)$ on $H$ has \textit{rank} at least $R$ if, for any scalars $\lambda_1, \dots, \lambda_d$ which are not all zero, the symmetric bilinear form $\lambda_1 b_1+\dots+ \lambda_db_d$ has rank at least $R$ (so that for any basis $v_1, \dots, v_n$ of $H$, the matrix with $ij$-entry $b(v_i, v_j)$ has rank at least $R$).  When working over a field of characteristic not equal to 2, we say that a $d$-tuple of homogeneous quadratic forms $Q = (q_1, \dots, q_d)$ has rank at least $R$ if the associated $d$-tuple of symmetric bilinear forms has rank at least $R$. When the coordinates of $Q$ are quadratic polynomials, as opposed to homogeneous quadratic forms, then we say that $Q$ has rank at least $R$ if the $d$-tuple of homogeneous quadratic parts has rank at least $R$.
\end{definition}

\begin{lemma}[Rank decrement on passing to subspaces]\label{lem:rank-decrement}
Let $H$ be a finite dimensional vector space and suppose that the $d$-tuple $B= (b_1, \dots, b_d)$ of  bilinear forms on $H$ has {rank} at least $R$. Then for any subspace $V \leq H$ of codimension $D$, the rank of the restriction of $B$ to $V$ is at least $R-2D$.
\end{lemma}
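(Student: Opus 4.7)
The plan is to argue linear combination by linear combination: for any scalars $\lambda_1, \dots, \lambda_d$ not all zero, the bilinear form $b := \lambda_1 b_1 + \dots + \lambda_d b_d$ has rank at least $R$ on $H \times H$ by hypothesis, and its restriction to $V \times V$ coincides with the same linear combination of the restrictions. Hence it suffices to prove the following single-form statement: if $b$ is a bilinear form on $H$ of rank at least $R$ and $V \leq H$ has codimension $D$, then the restriction $b|_{V \times V}$ has rank at least $R - 2D$.

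To prove this, I would bound the dimension of the radical
\[
N := \bigset{v \in V : b(v,w) = 0 \text{ for all } w \in V},
\]
since $\rank(b|_{V \times V}) = \dim V - \dim N$. In the paper's notation $b(v,w) = (bv)(w)$, so $v \in N$ precisely when the linear form $bv \in H^*$ vanishes on $V$, i.e.\ when $bv \in V^\perp$. The key is to study the linear map
\[
\phi : V \to H^*, \qquad \phi(v) = bv,
\]
noting that $N = \phi^{-1}(V^\perp)$. Its kernel is $V \cap \set{x \in H : bx = 0}$, whose dimension is at most that of $\set{x \in H : bx = 0}$, which equals $\dim H - \rank(b) \leq \dim H - R$. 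Since $\dim V^\perp = D$, rank--nullity applied to $\phi$ yields
\[
\dim N \;\leq\; \dim \ker \phi + \dim V^\perp \;\leq\; (\dim H - R) + D.
\]

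Combining this with $\dim V = \dim H - D$ gives
\[
\rank(b|_{V\times V}) \;=\; \dim V - \dim N \;\geq\; (\dim H - D) - (\dim H - R + D) \;=\; R - 2D,
\]
completing the argument. There is no real obstacle here: the claim is a routine rank--nullity / annihilator computation, and the only thing to do is to carry it out cleanly in the paper's dual-space conventions (in particular using $(V^\perp)^\perp = V$ and $\dim V^\perp = \codim V$, both recalled in \S\ref{sec:notation}).
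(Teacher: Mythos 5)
Your proof is correct, and it takes a genuinely different (though equally elementary) route from the paper's. The paper works explicitly with matrices: extend a basis of $V$ to a basis of $H$, observe that the matrix of $b_\lambda|_{V\times V}$ is obtained from that of $b_\lambda$ by deleting $D$ rows and then $D$ columns, and invoke the fact that each deletion drops the rank by at most $D$ (using row-rank equals column-rank between the two steps). You instead phrase everything coordinate-free via the radical and rank--nullity for the map $\phi : V \to H^*$, $\phi(v) = bv$, bounding $\dim \phi^{-1}(V^\perp)$ by $\dim\ker\phi + \dim V^\perp$. These are really the same bound seen from two angles — deleting $D$ columns is the matrix shadow of intersecting with the $D$-dimensional space $V^\perp$ — but your version avoids choosing a basis, which some readers will find cleaner, while the paper's version makes the ``lose $D$ twice'' intuition very concrete. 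One small notational slip: under the paper's conventions (\S\ref{sec:notation}), $(bv)(w) = b(w,v)$, not $b(v,w)$, so your $\phi^{-1}(V^\perp)$ is the \emph{right} radical of $b|_{V\times V}$ while the $N$ you wrote down is the \emph{left} radical. This does not affect the argument since both radicals have the same dimension, equal to $\dim V - \rank(b|_{V\times V})$, but for consistency you should either define $N$ with the roles of $v,w$ swapped or replace $bv$ by $v^T b$ throughout.
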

\begin{proof}
Extend a basis $v_1, \dots, v_m$ of $V$ to a basis $v_1, \dots, v_{m+D}$ of $H$. Given scalars $\lambda_1, \dots, \lambda_d$, not all zero, write $b_\lambda := \lambda_1b_1 + \dots + \lambda_d b_d$.  Let $A_H$ denote the matrix of $b_\lambda$ with respect to the basis $v_1, \dots, v_{m+D}$. Let $A_V$ denote the matrix of the restriction $b_\lambda |_{V\times V}$ with respect to the basis $v_1, \dots, v_m$. Then the rank of $b_\lambda$ over $V$ (respectively over $H$) is the rank of $A_V$ (respectively $A_H$). We observe that $A_V$ can be obtained from $A_H$ by first deleting the bottom $D$ rows, then deleting the last $D$ columns. Deleting $D$ rows from $A_H$ leaves an $m\times (m+D)$ matrix with at least $R-D$ linearly independent rows. Since row-rank equals column-rank this matrix has at least $R-D$ linearly independent columns. Therefore deleting the last $D$ columns from this matrix leaves at least $R-2D$ linearly independent columns.
\end{proof}
\begin{lemma}[Weyl bound]\label{lem:weyl-bound}
Let $H $ be a finite vector space over $\F_p$ with $p \neq 2$ and let $q : H \to \F_p$ be a quadratic polynomial on $H$ whose quadratic homogeneous part has rank at least $R$. For any subspace $V \leq H$ of codimension $D$ and translate $x_0\in H$ we have
$$
\Bigabs{\sum_{x \in V+x_0} e_p\sqbrac{q(x)}} \leq |V|p^{D-R/2}.
$$
\end{lemma}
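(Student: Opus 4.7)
The plan is the classical Weyl differencing argument, reducing the bound to counting degeneracies of the restricted bilinear form, which is controlled by Lemma \ref{lem:rank-decrement}. Since $p$ is odd, I will freely divide by $2$ throughout.

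First I would write $S := \sum_{x \in V + x_0} e_p[q(x)]$ and compute $|S|^2$ by substituting $y = x + h$ with $h \in V$, which is a valid bijection on $(V+x_0) \times (V+x_0)$. Writing the quadratic polynomial as $q(x) = \tilde b(x,x) + \ell x + c$ with $\tilde b$ the symmetric associated bilinear form, one finds
\[
q(x+h) - q(x) = 2\tilde b(x,h) + \tilde b(h,h) + \ell h.
\]
Hence the $h$-dependent term $\tilde b(h,h)+\ell h$ factors out of the inner sum and, after bounding by moduli,
\[
|S|^2 \leq \sum_{h \in V} \Bigabs{\sum_{x \in V+x_0} e_p\sqbrac{2\tilde b(x,h)}}.
\]

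Next I would apply orthogonality to the inner sum: the map $x \mapsto 2\tilde b(x,h)$ is a linear form on $H$, and summing $e_p[\cdot]$ of it over the coset $V+x_0$ yields either $|V|$ (when the form vanishes identically on $V$) or $0$ (otherwise). Letting
\[
K := \set{h \in V : \tilde b(v,h) = 0 \text{ for all } v \in V},
\]
(using $p \neq 2$ to remove the factor of $2$), we get $|S|^2 \leq |V| \cdot |K|$. The set $K$ is precisely the kernel of the linear map $V \to V^{*}$ sending $h \mapsto \tilde b(\cdot,h)|_V$, whose image has dimension equal to the rank of $\tilde b|_{V \times V}$. By Lemma \ref{lem:rank-decrement}, this restricted rank is at least $R - 2D$, so
\[
|K| \leq p^{\dim V - (R-2D)} = |V| p^{2D - R}.
\]

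Combining these two inequalities yields $|S|^2 \leq |V|^2 p^{2D-R}$, and taking square roots gives the claimed bound. There is no serious obstacle: the only subtlety is ensuring one applies the rank-decrement lemma to the symmetric bilinear form associated to the homogeneous quadratic part of $q$ (the linear and constant parts of $q$ contribute only the $h$-independent phase $e_p[\tilde b(h,h)+\ell h]$ inside a sum whose modulus is being bounded, and hence drop out harmlessly).
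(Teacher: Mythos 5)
Your proof is correct and follows essentially the same route as the paper's: Weyl differencing $|S|^2 = \sum_{h\in V}\sum_{x\in V+x_0} e_p[q(x+h)-q(x)]$, dropping the $x$-independent phase, applying orthogonality over $V$, and then bounding the size of the radical of $\tilde b|_{V\times V}$ via Lemma~\ref{lem:rank-decrement}. The only cosmetic differences are that you spell out the coset translate $x_0$ and the rank--nullity step more explicitly than the paper does.
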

\begin{proof}
Write $b$ for a symmetric bilinear form on $H$ for which $q(x) - b(x,x)$ is a linear polynomial. By the rank-decrement estimate (Lemma \ref{lem:rank-decrement}), the set of $h \in V$ for which $b(h, \cdot)$ is the zero linear form is a  subspace of $V$ of size at most $|V|p^{2D - R}$.  The square of the absolute value we are estimating can therefore be bounded as follows:
\[
\sum_{h\in V} \sum_{x\in V} e_p\sqbrac{q(x+h+x_0) - q(x+x_0)} \leq \sum_{h\in V}\Bigabs{ \sum_{x\in V} e_p(2b(h,x)} =  \sum_{\substack{h\in V\\ 2b(h,\cdot) = 0}} |V| \leq |V|^2p^{2D-R}.\qedhere
\]
\end{proof}

\begin{lemma}[Structure of differenced level sets]\label{lem:struct-diff}
Let $H$ be a finite vector space over $\F_p$ with $p$ odd and let $Q = (q_1, \dots, q_d)$ be a $d$-tuple of quadratic polynomials on $H$ of rank at least $R$. 
For any $u \in H^s$ and $v \in H^t$ set 
$$
\Phi_{u,v} := Q^{-1}(0) \cap \bigcap_{i=1}^s\sqbrac{Q^{-1}(0)-u_i}\cap \bigcap_{j=1}^t\sqbrac{v_j - Q^{-1}(0)}.
$$
Write $B$ for the $d$-tuple of symmetric bilinear forms associated to $Q$ and $L $ for the $d$-tuple of homogeneous linear parts. Set
\begin{equation}\label{eq:diff-subspace}
V_{u,v} := \set{x \in H : B(u_i,x) = B(v_j,x)+Lx = 0 \quad (1\leq i \leq s,\ 1 \leq j\leq t)}.
\end{equation}
Then for any $x_0 \in \Phi_{u,v}$ we have 
\begin{equation}\label{eq:diff-set-formula}
\Phi_{u,v} = Q^{-1}(0) \cap \brac{V_{u,v}+x_0}.
\end{equation}
\end{lemma}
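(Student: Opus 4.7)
The plan is to linearise the defining conditions of $\Phi_{u,v}$ by anchoring at the known solution $x_0$. Writing each coordinate $q_k(y) = b_k(y,y) + \ell_k y + c_k$ with $b_k$ symmetric, a direct expansion gives
\[
q_k(x + u_i) - q_k(x_0 + u_i) = [q_k(x) - q_k(x_0)] + 2 b_k(u_i, x - x_0),
\]
and, after using the constraint $q_k(x) = q_k(x_0) = 0$ to substitute $b_k(x,x) - b_k(x_0,x_0) = -\ell_k(x - x_0)$, one obtains
\[
q_k(v_j - x) - q_k(v_j - x_0) = -2\bigl[ b_k(v_j, x - x_0) + \ell_k(x - x_0) \bigr].
\]

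Since $x_0 \in \Phi_{u,v}$ makes $q_k(x_0 + u_i)$, $q_k(v_j - x_0)$, and $q_k(x_0)$ vanish, for any $x \in Q^{-1}(0)$ the above identities simplify to
\[
q_k(x + u_i) = 2 b_k(u_i, x - x_0), \qquad q_k(v_j - x) = -2\bigl[ b_k(v_j, x - x_0) + \ell_k(x - x_0) \bigr].
\]
Because $p$ is odd, $2$ is invertible in $\F_p$, so the requirements $Q(x + u_i) = 0$ for all $i$ and $Q(v_j - x) = 0$ for all $j$ are equivalent to $B(u_i, x - x_0) = 0$ and $B(v_j, x - x_0) + L(x - x_0) = 0$ holding coordinate-wise for all $i, j$; that is, precisely to $x - x_0 \in V_{u,v}$.

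Both inclusions in \eqref{eq:diff-set-formula} then follow: if $x \in \Phi_{u,v}$, then $x \in Q^{-1}(0)$ and $x - x_0 \in V_{u,v}$, giving the forward inclusion; conversely, if $x \in Q^{-1}(0)$ with $x - x_0 \in V_{u,v}$, the same identities force $q_k(x + u_i) = q_k(v_j - x) = 0$, placing $x$ in $\Phi_{u,v}$. No substantive obstacle is anticipated — the argument is essentially bookkeeping for quadratic polynomial identities. The one point requiring care is that the $v_j$-conditions pick up the linear part $L$ while the $u_i$-conditions do not: this asymmetry arises because the expansion of $q_k(v_j - x) - q_k(v_j - x_0)$ contains $b_k(x,x) - b_k(x_0, x_0)$, which must be rewritten linearly using $q_k(x) = q_k(x_0) = 0$, introducing the extra $\ell_k(x - x_0)$ term.
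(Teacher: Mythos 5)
Your proof is correct and follows essentially the same route as the paper's: expand $Q(x+u_i)$ and $Q(v_j-x)$, use $Q(x)=0$ to linearise the remaining quadratic terms, and anchor at $x_0\in\Phi_{u,v}$ to cancel the constants, relying on $p$ odd to divide by $2$. The only cosmetic difference is that the paper first states the (linearised) membership criterion for $\Phi_{u,v}$ in full before specialising to $x-x_0$, whereas you subtract the corresponding identities at $x_0$ directly; the underlying algebra is identical.
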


\begin{proof}
Expanding the quadratics $Q(u_i+x)$ and $Q(v_j-x)$, we  observe that $x \in \Phi_{u,v}$ if and only if
\begin{multline*}
Q(x) = 0, \quad 2B(u_i,x) = Q(0)-Q(u_i) \quad (1\leq i \leq s),\\ 2\sqbrac{B(v_j,x)+Lx} = Q(v_j) -Q(0)\quad (1\leq j \leq t).
\end{multline*}
Hence for fixed $x_0 \in \Phi_{u,v}$, we have $x \in \Phi_{u,v} - x_0$ if and only if 
\begin{equation*}
Q(x+x_0) = 0, \quad B(u_i,x) = 0 \quad (1\leq i \leq s),\quad B(v_j,x)+Lx = 0\quad (1\leq j \leq t).\qedhere
\end{equation*}
\end{proof}

\begin{lemma}[Size of a generic differenced subspace]\label{lem:size-of-diff-sub}
Let $H$ be a finite vector space over $\F_p$ with $p$ odd and let $B = (b_1, \dots, b_d)$ be a $d$-tuple of  bilinear forms on $H$ of rank (as a $d$-tuple) at least $R$. Let $L_1 = (\ell_{11}, \dots, \ell_{1d})$, \dots, $L_k = (\ell_{k1}, \dots, \ell_{kd})$ be $d$-tuples of linear forms on $H$. Then for all but $|H|^kp^{kd-R}$ tuples $ (h_1, \dots, h_k) \in H^k$, the linear map $x \mapsto (B(h_1,x) + L_1x, \dots, B(h_k,x) + L_kx)$ has full-rank, so that the maps $h_j^Tb_i + \ell_{ij}$ ($1\leq i \leq d$, $1\leq j\leq k$) are linearly independent and the subspace
$$
V_{h_1, \dots, h_k} := \set{x \in H : B(h_1, x)+L_1x = \dots = B(h_k, x)+L_kx = 0 },
$$
has codimension $kd$.
\end{lemma}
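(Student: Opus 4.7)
The plan is a union bound over nontrivial linear relations among the $kd$ output forms, using the rank hypothesis on $B$ to control each individual relation.

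First I would reformulate the failure mode. The map $T : x \mapsto (B(h_1,x)+L_1 x, \dots, B(h_k,x)+L_k x)$ fails to be surjective onto $\F_p^{kd}$ precisely when the $kd$ linear forms $\phi_{ij} := h_j^T b_i + \ell_{ij} \in H^*$ are linearly dependent, i.e.\ when there exist scalars $\lambda = (\lambda_{ij}) \in \F_p^{kd}$, not all zero, such that $\sum_{i,j} \lambda_{ij}(h_j^T b_i + \ell_{ij}) = 0$ in $H^*$. For each $j$ set $b^{(j)}_\lambda := \sum_i \lambda_{ij} b_i$ and $c^{(j)}_\lambda := \sum_i \lambda_{ij} \ell_{ij}$, so that this dependence is equivalent to
$$\sum_{j=1}^k h_j^T b^{(j)}_\lambda \,=\, -\sum_{j=1}^k c^{(j)}_\lambda \quad \text{in } H^*.$$

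Next, I would fix a nonzero $\lambda$ and bound the number of tuples $(h_1,\dots,h_k)$ for which this identity holds. Since $\lambda \neq 0$, there is some index $j_0$ with $(\lambda_{1 j_0},\dots,\lambda_{d j_0}) \neq 0$. By the rank hypothesis on the $d$-tuple $B$, the bilinear form $b^{(j_0)}_\lambda$ has rank at least $R$, so the linear map $h_{j_0} \mapsto h_{j_0}^T b^{(j_0)}_\lambda$ from $H$ to $H^*$ has image of dimension $R$ and kernel of size $|H|p^{-R}$. Hence, for every choice of the remaining $h_{j'}$ ($j' \neq j_0$), the displayed affine equation has at most $|H|p^{-R}$ solutions $h_{j_0}$. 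This gives at most $|H|^{k-1} \cdot |H| p^{-R} = |H|^k p^{-R}$ bad tuples for each fixed nonzero $\lambda$. Union-bounding over the at most $p^{kd}$ nonzero values of $\lambda$ yields the claimed exceptional set of size at most $|H|^k p^{kd-R}$.

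Finally, for any tuple outside the exceptional set the forms $\phi_{ij}$ are linearly independent, so $T$ has rank $kd$ and $V_{h_1,\dots,h_k} = \ker T$ has codimension $kd$ by rank-nullity. There is no genuine obstacle here; the only mild bookkeeping point is distinguishing a single coordinate $j_0$ from the others (so that the constraint on $h_{j_0}$ becomes an affine condition with fixed right-hand side), which is exactly what ensures the $p^{-R}$ saving per dependence.
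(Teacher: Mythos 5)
Your proof is correct and follows essentially the same route as the paper: rewrite failure of full rank as a nontrivial linear dependence $\sum_{i,j}\lambda_{ij}(h_j^Tb_i+\ell_{ij})=0$, single out an index $j_0$ where $(\lambda_{1j_0},\dots,\lambda_{dj_0})\neq 0$, use the rank hypothesis to see that at most $|H|p^{-R}$ choices of $h_{j_0}$ satisfy the resulting affine constraint for each choice of the remaining data, and union bound over $\lambda$. The only (inessential) cosmetic differences are that the paper takes $j_0$ minimal and you should say the image of $h\mapsto h^Tb^{(j_0)}_\lambda$ has dimension \emph{at least} $R$ (hence kernel of size \emph{at most} $|H|p^{-R}$), which you in any case use correctly in the next line.
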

\begin{proof}
If the maps $h_j^Tb_i+\ell_{ij}$ ($1\leq i \leq d$, $1\leq j\leq k$) are not linearly independent, then there exists $\lambda_{ij}$ not all zero such that $\sum_{i,j} \lambda_{ij} (h_j^Tb_i+\ell_{ij}) = 0$. Let $j_0$ be minimal for which  $\lambda_{1j_0}$, \dots, $\lambda_{dj_0}$ are not all zero. Then
$$
h_{j_0}^T\Bigbrac{\sum_{i=1}^d \lambda_{i{j_0}}b_i} = -\sum_{i=1}^d \lambda_{ij_0}\ell_{ij}- \sum_{i=1}^d \sum_{j\neq {j_0}}  \lambda_{ij}(h_j^Tb_i+\ell_{ij}).
$$
Fixing a choice of each $\lambda_{ij}$ (at most $p^{kd}$ choices) and choices for $h_j$ with $j \neq j_0$ (at most $|H|^{k-1}$ choices), our rank assumption on $B$ implies that at most $|H|p^{-R}$ choices remain for $h_{j_0}$. The result now follows from the union bound.
\end{proof}
\begin{lemma}[Pseudorandomness of quadratic level sets on cosets]\label{lem:pseud-quad-diff}
Let $H$ be a finite vector space over $\F_p$ with $p$ odd and let $Q = (q_1, \dots, q_d)$ be a $d$-tuple of quadratic polynomials on $H$ of rank at least $R$. Let $V\leq H$ be a subspace of codimension $D$ and let $h \in H$. Then $Q^{-1}(0)$ is Fourier uniform on the coset $V+h$ in the following sense:  if $\ell \in H^*$  then\footnote{We recall from \S\ref{sec:notation} that $V^\perp = \set{\ell \in H^*: \ell v = 0\ \forall v \in V}$.} 
$$
\biggabs{\sum_{x \in Q^{-1}(0)\cap (V+h)} e_p(\ell x) -
1_{V^\perp}(\ell)e_p(\ell h) |V|p^{-d}} \leq |H|p^{-R/2} . 
$$
\end{lemma}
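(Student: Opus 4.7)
The plan is to expand the indicator of $Q^{-1}(0)$ via the standard Fourier identity over $\F_p^d$, and then apply the Weyl bound (Lemma \ref{lem:weyl-bound}) to each resulting exponential sum. Writing
$$
1_{Q^{-1}(0)}(x) = p^{-d}\sum_{\lambda \in \F_p^d} e_p\sqbrac{\lambda \cdot Q(x)},
$$
I would rewrite the sum in the statement as
$$
\sum_{x \in Q^{-1}(0) \cap (V+h)} e_p(\ell x) = p^{-d}\sum_{\lambda \in \F_p^d}\ \sum_{x \in V+h} e_p\sqbrac{\lambda \cdot Q(x) + \ell x}.
$$

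The contribution from $\lambda = 0$ is $p^{-d}\sum_{x \in V+h} e_p(\ell x)$. Since $x \mapsto \ell x$ is an additive character on $V$, its sum over $V+h$ vanishes unless $\ell \in V^\perp$, in which case $\ell$ is identically equal to $\ell h$ on $V+h$ and the sum equals $|V| e_p(\ell h)$. This recovers precisely the main term $p^{-d}|V| e_p(\ell h) 1_{V^\perp}(\ell)$ appearing in the statement.

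For the error term, I would show each nonzero $\lambda$ contributes a genuine oscillatory exponential sum over a coset. For $\lambda \neq 0$, the map $x \mapsto \lambda\cdot Q(x) + \ell x$ is a quadratic polynomial on $H$ whose homogeneous quadratic part is $x\mapsto \sum_i \lambda_i b_i(x,x)$, where $B = (b_1,\dots,b_d)$ is the symmetric bilinear tuple associated to $Q$. By the rank hypothesis on $Q$, this quadratic form has rank at least $R$. Hence Lemma \ref{lem:weyl-bound}, applied to the coset $V+h$ of codimension $D$, gives
$$
\Bigabs{\sum_{x \in V+h} e_p\sqbrac{\lambda \cdot Q(x) + \ell x}} \leq |V| p^{D - R/2}.
$$
Summing over the $p^d - 1$ nonzero values of $\lambda$ and dividing by $p^d$ produces a total error bounded by $|V| p^{D - R/2} = |H| p^{-R/2}$, which is precisely the claimed bound.

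I do not anticipate any real obstacles: the computation is a direct combination of the Fourier expansion of a level set with the rank-preserving observation that every nontrivial linear combination of the $q_i$ inherits the rank lower bound (which is built into the definition), so Weyl applies uniformly for $\lambda \neq 0$. The only point requiring minor care is the identification of the $\lambda = 0$ term with the advertised main term, which hinges on recognising the standard vanishing/non-vanishing dichotomy for characters of $V$.
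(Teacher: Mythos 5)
Your proof is correct and follows essentially the same route as the paper's: expand the indicator of the level set via orthogonality over $\F_p^d$, extract the $\lambda=0$ term as the main term via the standard character sum over a coset, and bound each $\lambda\neq 0$ term by the Weyl bound (Lemma \ref{lem:weyl-bound}) using that the rank hypothesis applies to every nontrivial $\F_p$-linear combination of the homogeneous quadratic parts. The only cosmetic difference is that the paper writes the expansion using $\E_{\lambda\in\F_p^d}$ instead of $p^{-d}\sum_\lambda$, which is an equivalent normalisation.
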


\begin{proof}
By the orthogonality relations
\begin{equation*}
\sum_{x \in Q^{-1}(0)\cap (V+h)} e_p(\ell x)
		= \E_{\lambda \in \F_p^d} \sum_{x \in V+h} e_p\brac{\lambda Q(x)+ \ell x}.
\end{equation*}
By the Weyl bound (Lemma \ref{lem:weyl-bound}),  the inner sum is at most $|V|p^{D-R/2} = |H|p^{-D/2}$ when $\lambda \in \F_p^d\setminus\set{0}$. When $\lambda = 0$, the inner sum becomes $e(\ell h)\sum_{x \in V} e_p\brac{ \ell x}$, which is zero if $\ell \notin V^\perp$ and equals $e(\ell h)|V|$ otherwise.
\end{proof}

\begin{corollary}[Size of quadratic level sets and relative $U^2$-inverse theorem]\label{cor:quad-level-size}
Let $H$ be a finite vector space over $\F_p$ with $p$ odd and let $Q = (q_1, \dots, q_d)$ be a $d$-tuple of quadratic polynomials on $H$ of rank at least $R$. Then
$$
|Q^{-1}(0)| = |H|p^{-d} + O\brac{|H|p^{-R/2}},
$$
so that either $R \ll d$, or $|Q^{-1}(0)| \asymp |H|p^{-d}$. Furthermore, either $R \ll d$ or for any 1-bounded function $f : H \to \C$ with $\supp(f) \subset Q^{-1}(0)$ we have the inequality
\begin{equation}\label{eq:u2-quad-0}
\normnorm{f}_{U^2}^4 \ll \normnorm{\hat f}_\infty |H|^2p^{-3d}.
\end{equation}

\end{corollary}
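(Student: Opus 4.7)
The plan is to derive both statements directly from the pseudorandomness estimate for quadratic level sets (Lemma \ref{lem:pseud-quad-diff}) combined with the restriction-style inverse theorem for Fourier uniform sets (Theorem \ref{thm:restriction-inverse}).

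For the size estimate, I would specialise Lemma \ref{lem:pseud-quad-diff} to the trivial subspace $V = H$, translate $h = 0$, and dual vector $\ell = 0$. Since $V^\perp = \set{0}$ and $1_{V^\perp}(0) = 1$, the lemma immediately gives
\[
\bigabs{|Q^{-1}(0)| - |H|p^{-d}} \leq |H|p^{-R/2}.
\]
The dichotomy $R \ll d$ versus $|Q^{-1}(0)|\asymp |H|p^{-d}$ is then a trivial comparison of the main term and the error: if the error dominates the main term, $p^{-R/2}\gtrsim p^{-d}$, i.e., $R \ll d$; otherwise the count is comparable to $|H|p^{-d}$.

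For the $U^2$-estimate \eqref{eq:u2-quad-0}, I would first invoke the same lemma with $V = H$ and an \emph{arbitrary} non-trivial $\ell \in H^*\setminus\set{0}$, obtaining the Fourier uniformity bound
\[
\max_{\ell\neq 0}\Bigabs{\sum_{x \in Q^{-1}(0)} e_p(\ell x)} \leq |H|p^{-R/2}.
\]
Setting $\Phi := Q^{-1}(0)$ and $\delta := |\Phi|/|H|$, I would check the hypothesis of Theorem \ref{thm:restriction-inverse} at $q = 4$, which requires $\Phi$ to be $\delta^2$-Fourier uniform. Using the already-established $|\Phi| \asymp |H|p^{-d}$ (so $\delta \asymp p^{-d}$), the requirement $|H|p^{-R/2} \leq \delta^2 |\Phi|$ reduces to $p^{-R/2}\lesssim p^{-3d}$, i.e.\ $R$ being a sufficiently large constant multiple of $d$. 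In either scenario we may assume $R \gg d$ with a large enough implied constant (otherwise the first part of the corollary places us in the case $R \ll d$, which is one of the two alternatives).

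Finally, Theorem \ref{thm:restriction-inverse} with $q = 4$ then yields
\[
\E_\gamma \bigabs{\hat{f}(\gamma)}^{4} \ll \normnorm{\hat f}_\infty \, |\Phi|^{3}|H|^{-1} \ll \normnorm{\hat f}_\infty\, |H|^{2}p^{-3d},
\]
and the identity $\norm{f}_{U^2}^4 = \normnorm{\hat f}_4^4 = \E_\gamma|\hat f(\gamma)|^4$ (valid for $G=H$ with our chosen Fourier normalisations) delivers \eqref{eq:u2-quad-0}. The only point requiring any care is tracking the implicit constants so that the single assumption ``$R$ exceeds a suitable absolute multiple of $d$'' simultaneously secures the $\asymp$-estimate for $|Q^{-1}(0)|$ and the $\delta^2$-Fourier uniformity hypothesis; this is the main (mild) obstacle, and it is purely bookkeeping.
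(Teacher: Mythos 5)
Your proof follows exactly the route taken in the paper: specialise Lemma \ref{lem:pseud-quad-diff} with $V=H$, $h=0$ to get both the size asymptotic (taking $\ell=0$) and Fourier uniformity of $Q^{-1}(0)$ (taking $\ell\neq 0$), then feed this into Theorem \ref{thm:restriction-inverse} at $q=4$, noting that the $\delta^2$-uniformity hypothesis is met once $R$ exceeds a suitable constant multiple of $d$ (else $R\ll d$). The only difference is that you carry out the bookkeeping more explicitly than the paper does; the argument and constants check out.
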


\begin{proof}
Take $V = H$ and $h = 0$ in Lemma \ref{lem:pseud-quad-diff} to yield 
$$
\biggabs{\sum_{x \in Q^{-1}(0)} e_p(\ell x) -
1_{\ell =0} |H|p^{-d}} \leq |H|p^{-R/2} . 
$$
This gives our asymptotic formula for $|Q^{-1}(0)|$ on taking $\ell = 0$. In addition, it tells us that either $R \ll d$ or  $Q^{-1}(0)$ is sufficiently pseudorandom for Theorem \ref{thm:restriction-inverse} to be applicable with $q = 4$, giving \eqref{eq:u2-quad-0}.
\end{proof}

\begin{corollary}[Pseudorandomness of differenced quadratic level sets]\label{cor:pseud-once-diff}
Let $H$ be a finite vector space over $\F_p$ with $p$ odd and let $Q = (q_1, \dots, q_d)$ be a $d$-tuple of quadratic polynomials on $H$ of rank at least $R$. Let $B$ denote the $d$-tuple of symmetric bilinear forms associated to $Q$ and $L$ denote the $d$-tuple of homogeneous linear parts of $Q$.  Given $h \in H$ define 
$$
 \Phi_h:= Q^{-1}(0)\cap \sqbrac{Q^{-1}(0)-h} \quad \text{and} \quad \Psi_h := Q^{-1}(0)\cap \sqbrac{h-Q^{-1}(0)}.
$$
Then, fixing  $x_0 \in \Phi_h$ and $y_0 \in \Psi_h$, for all but $ |H|p^{d - R}$ values of $h \in H$ we have both of the following\footnote{Recall (see \S\ref{sec:notation}) that $\ang{L}$ denotes the space of linear forms which are in the $\F_p$-span of the coordinates of $L$.}
\begin{align}
\Bigabs{\sum_{x \in \Phi_h} e_p(\ell (x-x_0)) - |H|p^{-2d}1_{\ell \in \ang{h^TB}}} & \leq  |H|p^{ - R/2};\label{eq:pseud-once-diff}\\
\Bigabs{\sum_{x \in \Psi_h} e_p(\ell(x-y_0)) - |H|p^{-2d}1_{\ell \in \ang{h^TB+L}}} & \leq  |H|p^{- R/2}.\label{eq:pseud-once-diff-minus}
\end{align}
In particular, either $R \ll d$ or for all but $ |H|p^{O(d) - \Omega(R)}$ values of $h \in H$, any 1-bounded function $f : H \to \C$ with $\supp(f) \subset \Phi_h$ satisfies
\begin{equation}\label{eq:u2-quad-1}
\normnorm{f}_{U^2}^4 \ll \normnorm{\hat f}_\infty |H|^2p^{-6d}.
\end{equation}

\end{corollary}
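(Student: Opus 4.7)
The two Fourier estimates \eqref{eq:pseud-once-diff} and \eqref{eq:pseud-once-diff-minus} come from stringing together the three structural results of this section. First, Lemma~\ref{lem:struct-diff}, applied with $(s,t) = (1,0)$ and $(s,t) = (0,1)$ respectively, writes $\Phi_h = Q^{-1}(0) \cap (V_h + x_0)$ and $\Psi_h = Q^{-1}(0) \cap (W_h + y_0)$, where $V_h := \{x \in H : B(h,x) = 0\}$ and $W_h := \{x \in H : B(h,x) + Lx = 0\}$. Second, Lemma~\ref{lem:size-of-diff-sub} with $k = 1$ (and with $L_1 = 0$ for $V_h$, $L_1 = L$ for $W_h$) shows that outside an exceptional set of size $O(|H|p^{d-R})$, both subspaces have codimension exactly $d$, with annihilators $\ang{h^T B}$ and $\ang{h^T B + L}$ respectively. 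For such $h$, Lemma~\ref{lem:pseud-quad-diff} applied with $V = V_h$ and translate $x_0$ (resp.\ $V = W_h$ and translate $y_0$), followed by multiplication through by $e_p(-\ell x_0)$ (resp.\ $e_p(-\ell y_0)$) to recentre the exponential sum, delivers the claimed bounds with leading term $|V_h|p^{-d} = |H|p^{-2d}$.

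For the $U^2$-restriction bound \eqref{eq:u2-quad-1}, the key point is that although $\Phi_h$ fails to be Fourier-uniform on $H$---its Fourier transform is large precisely on the $d$-dimensional direction space $V_h^\perp = \ang{h^T B}$---it \emph{is} Fourier-uniform when viewed as a subset of the affine subspace $V_h + x_0$. Translating, define $g : V_h \to \C$ by $g(y) := f(y + x_0)$. Since the $U^2$-norm counts additive quadruples, every vertex of which must lie in a common coset of $V_h$, a direct computation gives $\|f\|_{U^2}^4 = \|g\|_{U^2}^4$, while the surjectivity of the restriction map $H^* \to V_h^*$ yields $\normnorm{\hat f}_\infty = \normnorm{\hat g}_\infty$. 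The estimate \eqref{eq:pseud-once-diff}, restricted to $\ell \notin V_h^\perp$, says precisely that the Fourier-uniformity level of $\Phi_h - x_0$ inside $V_h$ is at most $|H|p^{-R/2}/|\Phi_h| \asymp p^{2d - R/2}$. Once $R$ exceeds a sufficiently large multiple of $d$, this drops below the square of the relative density $p^{-d}$ of $\Phi_h$ in $V_h$, so Theorem~\ref{thm:restriction-inverse} with $q = 4$ applies on the ambient group $V_h$; transporting the resulting fourth-moment bound for $\hat g$ back via the identities above yields \eqref{eq:u2-quad-1}.

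The main obstacle is executing this reduction from $H$ to the subspace $V_h$ cleanly: one must convert the \emph{partial} Fourier-uniformity of $\Phi_h$ on $H$ (large only along $V_h^\perp$) into the \emph{full} Fourier-uniformity on $V_h$ required by Theorem~\ref{thm:restriction-inverse}, and simultaneously check that both $\|f\|_{U^2}^4$ and $\normnorm{\hat f}_\infty$ are preserved under the passage to $g$. The remaining bookkeeping---a union bound on the failure events for $V_h$ and $W_h$---produces the final exceptional set of size $|H|p^{O(d) - \Omega(R)}$.
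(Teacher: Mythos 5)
Your proof plan matches the paper's proof in all essentials: for \eqref{eq:pseud-once-diff} and \eqref{eq:pseud-once-diff-minus}, the paper likewise chains together Lemma~\ref{lem:struct-diff} (to realise $\Phi_h$, $\Psi_h$ as $Q^{-1}(0)$ intersected with an explicit coset), Lemma~\ref{lem:pseud-quad-diff} (for the Fourier estimate on that coset), and Lemma~\ref{lem:size-of-diff-sub} (to pin the codimension at $d$ for all but $|H|p^{d-R}$ values of $h$); for \eqref{eq:u2-quad-1}, it likewise views $\Phi_h - x_0$ as a pseudorandom subset of $V_h$ and invokes Theorem~\ref{thm:restriction-inverse} with $q=4$ on the ambient group $V_h$. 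Your observations that $\normnorm{f}_{U^2(H)}^4 = \normnorm{g}_{U^2(V_h)}^4$ and $\normnorm{\hat f}_\infty = \normnorm{\hat g}_\infty$ are exactly the bookkeeping the paper leaves implicit.

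One small point worth flagging. Carrying the final computation through, Theorem~\ref{thm:restriction-inverse} on $V_h$ with $q=4$ gives
\[
\normnorm{f}_{U^2}^4 = \normnorm{g}_{U^2(V_h)}^4 \ll \normnorm{\hat g}_\infty\,\frac{|\Phi_h|^3}{|V_h|} \asymp \normnorm{\hat f}_\infty\,|H|^2 p^{-5d},
\]
which is a factor $p^{d}$ weaker than the stated $p^{-6d}$. The exponent $p^{-6d}$ is what one would obtain by applying the restriction inverse theorem with ambient group $H$, but $\Phi_h$ is not Fourier uniform on $H$ (its Fourier transform has modulus $|\Phi_h|$ on all of $V_h^\perp = \ang{h^TB}$), so that application is not available. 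This appears to be a harmless slip in the paper: in the downstream use (Lemma~\ref{lem:large-fourier}) the exponent $p^{-5d}$ still yields $\sum_h \normnorm{\widehat{\Delta_h f}}_\infty \gg \eta^8 |H|^2 p^{-2d}$, provided one uses the sharper lower bound $\normnorm{1_{\Phi_h}}_{U^2}^4 \geq p^d |\Phi_h|^4|H|^{-1}$, obtained by observing that $|\hat 1_{\Phi_h}(\ell)| = |\Phi_h|$ for every $\ell \in V_h^\perp$ (not just $\ell = 0$). If you write up the proof, you should either prove the weaker $p^{-5d}$ and adjust the downstream constant accordingly, or indicate how $p^{-6d}$ could be recovered, rather than assert the stated constant without comment.
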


\begin{proof}
By Lemma \ref{lem:struct-diff}, for fixed $y_0 \in \Psi_h$ we have
$
\Psi_h = Q^{-1}(0)\cap (V_h+y_0)
$, where $V_h$ denotes the subspace $\set{x \in H: B(h,x) + Lx = 0} = \ang{h^T B + L}^\perp$. Lemma \ref{lem:pseud-quad-diff} then gives that
\[
\biggabs{\sum_{x \in\Psi_h} e_p(\ell x) -
1_{V_h^\perp}(\ell)e_p(\ell y_0) |V_h|p^{-d}} \leq |H|p^{-R/2} . 
\]
The inequality \eqref{eq:pseud-once-diff-minus} now follows on combining the observation that  $V_h^\perp = \ang{h^TB+L}$ with Lemma \ref{lem:size-of-diff-sub} (which tells us that $\codim V_h = d$ for all but $ |H|p^{d - R}$ values of $h$).

The same argument gives 
\begin{equation}\label{eq:pseud-once-diff-no-eps}
\biggabs{\sum_{x \in\Phi_h} e_p(\ell x) -1_{\ang{Bh}}(\ell)
e_p(\ell x_0) |\ang{Bh}^\perp|p^{-d}} \leq |H|p^{-R/2} ,
\end{equation}
yielding \eqref{eq:pseud-once-diff} in conjunction with Lemma \ref{lem:size-of-diff-sub}. 

The inequality \eqref{eq:pseud-once-diff-no-eps} also tells us that either $R \ll d$ or $\Phi_h-x_0$ is sufficiently pseudorandom on $V_h$ for Theorem \ref{thm:restriction-inverse} to be applicable with $q = 4$, giving \eqref{eq:u2-quad-1}.
\end{proof}

\begin{corollary}[Pseudorandomness of thrice differenced quadratic level sets]\label{cor:pseud-thrice-diff}
Let $H$ be a finite vector space over $\F_p$ with $p$ odd and let $Q = (q_1, \dots, q_d)$ be a $d$-tuple of quadratic polynomials on $H$ of rank at least $R$ with associated $d$-tuple  of symmetric bilinear forms $B$. Writing
$$
V_{h_1, h_2, h_3} := \set{ x \in H : B(h_1, x) = B(h_2, x) = B(h_3, x) = 0},
$$
set 
$$
\Phi_{h_1, h_2, h_3}(x_0) := Q^{-1}(0)\cap (V_{h_1, h_2, h_3}+x_0)
$$
where $x_0 \in H$ is fixed. For all but $ |H|^3p^{3d - R}$ triples of $(h_1, h_2, h_3) \in H^3$ we have 
$$
\Bigabs{\sum_{x \in \Phi_{h_1, h_2, h_3}(x_0)} e_p(\ell(x-x_0)) - 1_{\ell \in \ang{h_1^TB, h_2^TB, h_3^TB}}|H|p^{-4d}} \leq  |H|p^{R/2}.
$$
\end{corollary}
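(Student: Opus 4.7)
The plan is to combine the generic full-rank result for a system of differenced linear conditions (Lemma \ref{lem:size-of-diff-sub}) with the Fourier-uniformity of $Q^{-1}(0)$ on a coset (Lemma \ref{lem:pseud-quad-diff}). Since the subspace $V_{h_1,h_2,h_3}$ is precisely the intersection of the zero sets of the $3d$ linear forms $h_j^T b_i$ (for $1\le i\le d$, $1\le j \le 3$), this is the exact thrice-iterated analogue of the ``plus'' direction of Corollary \ref{cor:pseud-once-diff}, and one simply follows that derivation with $k=3$ in place of $k=1$.

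More precisely, first I would apply Lemma \ref{lem:size-of-diff-sub} with $k=3$ and trivial linear parts $L_1 = L_2 = L_3 = 0$. This tells us that, outside an exceptional set of at most $|H|^3 p^{3d-R}$ triples $(h_1,h_2,h_3) \in H^3$, the $3d$ linear forms $h_j^T b_i$ are linearly independent, and hence
$$
V_{h_1,h_2,h_3} \;=\; \langle h_1^TB,\, h_2^TB,\, h_3^TB\rangle^\perp
$$
has codimension exactly $3d$ in $H$. In particular $|V_{h_1,h_2,h_3}| = |H|p^{-3d}$, and by the double-perp identity $(W^\perp)^\perp = W$ recorded in \S\ref{sec:notation} we also have $V_{h_1,h_2,h_3}^\perp = \langle h_1^TB,\, h_2^TB,\, h_3^TB\rangle$.

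For such a triple I would then feed the coset $V_{h_1,h_2,h_3}+x_0$ directly into Lemma \ref{lem:pseud-quad-diff} (with $V = V_{h_1,h_2,h_3}$, $h = x_0$, $D = 3d$), obtaining
$$
\Bigabs{\sum_{x\in Q^{-1}(0)\cap (V_{h_1,h_2,h_3}+x_0)} e_p(\ell x) \;-\; 1_{V_{h_1,h_2,h_3}^\perp}(\ell)\, e_p(\ell x_0)\, |V_{h_1,h_2,h_3}|\,p^{-d}} \;\le\; |H|p^{-R/2}.
$$
Substituting the values of $|V_{h_1,h_2,h_3}|$ and $V_{h_1,h_2,h_3}^\perp$ computed above, and multiplying through by the unimodular factor $e_p(-\ell x_0)$ in order to convert $\sum_x e_p(\ell x)$ into $\sum_x e_p(\ell(x-x_0))$, yields the claimed estimate (with the exponent on $R$ matching the $-R/2$ pattern of the preceding lemmas).

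There is no genuine obstacle here: all the analytic input, namely the Weyl bound together with its packaging as Lemma \ref{lem:pseud-quad-diff}, is already available, and the generic codimension count is precisely what Lemma \ref{lem:size-of-diff-sub} was designed to deliver. The only thing to verify is the bookkeeping that identifies the natural Fourier support $V_{h_1,h_2,h_3}^\perp$ with the span $\langle h_1^TB,\, h_2^TB,\, h_3^TB\rangle$ appearing in the statement, which is immediate from $(W^\perp)^\perp = W$ applied to $W = \langle h_1^TB, h_2^TB, h_3^TB\rangle$.
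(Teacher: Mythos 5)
Your proposal is correct and follows essentially the same path as the paper: apply Lemma \ref{lem:pseud-quad-diff} to the coset $V_{h_1,h_2,h_3}+x_0$, then use Lemma \ref{lem:size-of-diff-sub} (with $k=3$, $L_j=0$) to show the codimension is generically $3d$, together with $V_{h_1,h_2,h_3}^\perp=\ang{h_1^TB,h_2^TB,h_3^TB}$. Note that the bound $|H|p^{R/2}$ in the statement should clearly read $|H|p^{-R/2}$, and your derivation correctly produces this.
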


\begin{proof}
Since $V_{h_1, h_2, h_3}$ is a subspace of codimension at most $3d$, Lemma \ref{lem:pseud-quad-diff} gives that
\[
\biggabs{\sum_{x \in\Phi_{h_1, h_2, h_3}(x_0)} e_p(\ell x) -1_{V_{h_1, h_2, h_3}^\perp}(\ell)
e_p(\ell x_0) |V_{h_1, h_2, h_3}|p^{-d}} \leq |H|p^{-R/2} . 
\]
The result follows on combining the observation that  $V_{h_1, h_2, h_3}^\perp = \ang{h_1^TB, h_2^TB, h_3^TB}$ with Lemma \ref{lem:size-of-diff-sub} (which tells us that $\dim\ang{h_1^TB, h_2^TB, h_3^TB} = 3d$ for all but $ |H|^3p^{3d - R}$ triples $(h_1, h_2, h_3)$.
\end{proof}

\section{Many partial additive quadruples}\label{sec:many-partial}
This section begins our proof of the relative $U^3$-inverse theorem (Theorem \ref{thm:u3-inverse}). We first show that if the $U^3$-norm of $f$ is large, then the difference functions $\Delta_h f$ have a large Fourier coefficient for a dense set of $h \in H$. This is an application of our $U^2$-inverse theorem relative to pseudorandom sets (Theorem \ref{thm:u2-inverse}).
\begin{lemma}[Many large Fourier coefficients]\label{lem:large-fourier}
Let $H$ be a finite vector space over $\F_p$ with $p$ odd and let $Q = (q_1, \dots, q_d)$ be a $d$-tuple of quadratic polynomials on $H$ whose rank (as a $d$-tuple) is at least $R$. For any 1-bounded function $f : H \to \C$ with $\supp(f) \subset Q^{-1}(0)$, if $\norm{f}_{U^3} \geq \eta \norm{1_{Q^{-1}(0)}}_{U^3}$, then  either $R\ll d + \log(2/\eta)$ or there exists a set $\mathcal{H} \subset H$ with $|\mathcal{H}| \gg \eta^{O(1)}|H|$ and a function $\phi : \mathcal{H} \to H^*$ such that for each $h \in \mathcal{H}$ we have
\begin{equation}\label{eq:large-fourier}
\bigabs{\widehat{\Delta_h f}(\phi(h))} \gg \eta^{O(1)}|H|p^{-2d}.
\end{equation}
\end{lemma}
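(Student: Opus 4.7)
The plan is to promote the $U^3$-lower bound on $f$ to a lower bound on $\sum_h \normnorm{\widehat{\Delta_h f}}_\infty$ and then extract the claimed set $\mathcal{H}$ via pigeonhole. The key inputs are the Fourier uniformity of the differenced level sets $\Phi_h := Q^{-1}(0)\cap (Q^{-1}(0) - h)$ from Corollary \ref{cor:pseud-once-diff}, together with the restriction-type inverse estimate \eqref{eq:u2-quad-1} on these sets.

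My first step would be to estimate the reference $U^3$-norm from below. Writing $\norm{1_{Q^{-1}(0)}}_{U^3}^8 = \sum_h \norm{1_{\Phi_h}}_{U^2}^4$, call $h\in H$ \emph{generic} if the linear map $x\mapsto B(h,x)$ has full rank $d$; by Lemma \ref{lem:size-of-diff-sub} all but $|H|p^{d-R}$ elements of $H$ are generic. For generic $h$, Corollary \ref{cor:pseud-once-diff} localises $\widehat{1_{\Phi_h}}$ on the $d$-dimensional subspace $\ang{h^T B}$, on which it has modulus $\asymp |H|p^{-2d}$, with negligible values elsewhere once $R$ is a large enough multiple of $d$. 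Substituting into the Fourier identity $\norm{g}_{U^2}^4 = |H|^{-1}\sum_\ell|\hat g(\ell)|^4$ and keeping the main term from $\ang{h^T B}$ yields $\norm{1_{\Phi_h}}_{U^2}^4 \gg |H|^3 p^{-7d}$ for each generic $h$. Summing over the $\gg |H|$ generic values gives $\norm{1_{Q^{-1}(0)}}_{U^3}^8 \gg |H|^4 p^{-7d}$.

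Combining this with the hypothesis produces $\sum_h \norm{\Delta_h f}_{U^2}^4 \gg \eta^8 |H|^4 p^{-7d}$. To isolate the generic contribution, the trivial bound $\norm{\Delta_h f}_{U^2}^4 \leq |\Phi_h|^3 \leq |Q^{-1}(0)|^3 \ll |H|^3 p^{-3d}$ shows that the non-generic $h$ contribute in total at most $|H|p^{d-R}\cdot |H|^3 p^{-3d} = |H|^4 p^{-2d - R}$. Provided $R \gg d + \log(2/\eta)$ --- the exact rank tolerance in the statement --- this is absorbed into half of $\eta^8 |H|^4 p^{-7d}$, and I am left with
\[
\sum_{h\text{ generic}} \norm{\Delta_h f}_{U^2}^4 \gg \eta^8 |H|^4 p^{-7d}.
\]

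At this point I would apply the restriction-type inequality \eqref{eq:u2-quad-1} of Corollary \ref{cor:pseud-once-diff} (morally Theorem \ref{thm:restriction-inverse} applied to $\Delta_h f$ on the coset of $V_h$ that carries $\Phi_h$) to each generic $h$, bounding $\norm{\Delta_h f}_{U^2}^4$ linearly in $\normnorm{\widehat{\Delta_h f}}_\infty$. Dividing through and summing converts the displayed inequality into a bound of the form $\sum_{h\text{ generic}}\normnorm{\widehat{\Delta_h f}}_\infty \gg \eta^{O(1)} |H|^2 p^{-2d}$. The crude upper bound $\normnorm{\widehat{\Delta_h f}}_\infty \leq |\Phi_h| \ll |H| p^{-2d}$, combined with a standard pigeonhole, then yields a set $\mathcal{H} \subset H$ of size $\gg \eta^{O(1)}|H|$ on which $\normnorm{\widehat{\Delta_h f}}_\infty \gg \eta^{O(1)}|H|p^{-2d}$; taking $\phi(h)$ to be a linear form that realises this maximum proves \eqref{eq:large-fourier}. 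The only delicate point is to align the rank thresholds so that both the generic/non-generic split and the restriction estimate on each $\Phi_h$ are valid under $R \gg d + \log(2/\eta)$; everything else is Cauchy--Schwarz and pigeonhole bookkeeping.
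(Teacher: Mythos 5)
Your proof follows the paper's route in all essentials: lower-bound $\sum_h\norm{1_{\Phi_h}}_{U^2}^4$, remove the non-generic $h$ via the rank hypothesis and a crude $L^\infty$ bound, apply the relative $U^2$-inverse of Corollary~\ref{cor:pseud-once-diff} to each $\Delta_h f$, then pigeonhole. Your one refinement is the sharper lower bound $\norm{1_{\Phi_h}}_{U^2}^4 \gg |H|^3 p^{-7d}$, obtained by summing over the full $p^d$-element spectrum $\ang{h^TB}$, whereas the paper keeps only the trivial Fourier coefficient and gets $|\Phi_h|^4/|H|\asymp|H|^3p^{-8d}$. Your value is the correct order of magnitude, but note that it clashes with \eqref{eq:u2-quad-1} as printed: taking $f=1_{\Phi_h}$ there would give $\norm{1_{\Phi_h}}_{U^2}^4\ll|\Phi_h|\,|H|^2p^{-6d}\asymp|H|^3p^{-8d}$, strictly below your (true) value, and dividing your $\eta^8|H|^4p^{-7d}$ by $|H|^2p^{-6d}$ would produce $\eta^8|H|^2p^{-d}$, which already exceeds the trivial bound $\sum_h\normnorm{\widehat{\Delta_h f}}_\infty\leq\sum_h|\Phi_h|\ll|H|^2p^{-2d}$. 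Applying Theorem~\ref{thm:restriction-inverse} with $G=V_h$, $\Phi=\Phi_h-x_0$ in fact gives $\norm{\Delta_h f}_{U^2}^4 \ll \normnorm{\widehat{\Delta_h f}}_\infty\,|\Phi_h|^3|V_h|^{-1}\asymp\normnorm{\widehat{\Delta_hf}}_\infty|H|^2p^{-5d}$, so \eqref{eq:u2-quad-1} appears to carry a small typo ($p^{-6d}$ for $p^{-5d}$). With the exponent $p^{-5d}$ your $p^{-7d}$ lower bound divides through exactly to your stated $\eta^{O(1)}|H|^2p^{-2d}$; the paper's weaker $p^{-8d}$ lower bound and the printed $p^{-6d}$ happen to offset, which is why its proof still lands on $\eta^8|H|^2p^{-2d}$. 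So your approach is sound and essentially the paper's, but when formalising you should use the $p^{-5d}$ form of the relative $U^2$-inverse on $\Phi_h$.
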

\begin{proof}
We note that $\Delta_h f$ is a function supported on $\Phi_h := Q^{-1}(0)\cap[Q^{-1}(0) - h]$. By definition of the $U^3$-norm (Definition \ref{def:u3}), we have 
$$
\sum_h \norm{\Delta_hf}_{U^2}^4 \geq \eta^8 \sum_h \norm{1_{\Phi_h}}_{U^2}^4.
$$
By considering only the trivial Fourier coefficient we have
$$
\norm{1_{\Phi_h}}_{U^2}^4 = \E_\gamma \bigabs{\hat{1}_{\Phi_h}(\gamma)}^4 \geq |\Phi_h|^4 |H|^{-1}
$$
Using Corollary \ref{cor:pseud-once-diff}, either $R \ll d + \log(2/\eta)$ or for all but $\trecip{2}\eta^8 |H|$ values of $h$ we have $|\Phi_h| \asymp  |H|p^{-2d}$ and $\norm{\Delta_hf}_{U^2}^4 \ll \normnorm{\widehat{\Delta_h f}}_\infty |H|^2p^{-6d}$. Hence
\[
\sum_h\normnorm{\widehat{\Delta_h f}}_\infty \gg \eta^8 |H|^2p^{-2d}.\qedhere
\]
\end{proof}

Next we show that if \eqref{eq:large-fourier} holds, then the frequencies $\phi(h)\in H^*$ appearing in Lemma \ref{lem:large-fourier} have some additive structure. In the usual $U^3$-inverse argument of Gowers \cite{GowersNewFour} and Green and Tao \cite{GreenTaoInverse}, this structure is the assertion that for many solutions to the equation $h_1 + h_2 = h_3 + h_4$ we also have $\phi(h_1) +\phi(h_2) = \phi(h_3) + \phi(h_4)$, so that $\phi$ respects many additive quadruples. Adapting Gowers' argument to high-rank quadratic level sets does not give this conclusion, but gives a weaker additive relation we term many partial additive quadruples (described in the following lemma). This weaker notion is in some sense necessary given only the information \eqref{eq:large-fourier}. However in \S\ref{sec:many-genuine} we show that we can modify $\phi$ to give a new function where \eqref{eq:large-fourier} still holds, and where we do have many genuine additive quadruples.

\begin{lemma}[Many partial additive quadruples]\label{lem:many-partial-quads}
Let $H$ be a finite vector space over $\F_p$ with $p$ odd and let $Q = (q_1, \dots, q_d)$ be a $d$-tuple of quadratic polynomials on $H$ whose rank (as a $d$-tuple) is at least $R$. Write $B$ for the $d$-tuple of symmetric bilinear forms associated to $Q$. Let $\phi : \mathcal{H} \to H^*$ be a function, where $\mathcal{H} \subset H$ satisfies $|\mathcal{H}|\geq \eta |H|$. Suppose that there exists a 1-bounded function $f : H \to \C$ with $\supp(f) \subset Q^{-1}(0)$ such that for each $h \in \mathcal{H}$ we have 
\begin{equation*}
\bigabs{\widehat{\Delta_h f}(\phi(h))} \geq \eta |H|p^{-2d}.
\end{equation*}
Then either $R\ll d + \log(2/\eta)$ or there are $\gg \eta^{O(1)}|H|^3$ quadruples $(h_1, h_2, h_3, h_4) \in \mathcal{H}^4$ satisfying $h_1-h_2=h_3-h_4$ and
\begin{align*}
\phi(h_1) - \phi(h_2) -\phi(h_3)+\phi(h_4) \in  \ang{h_1^TB}-\ang{h_2^TB}-\ang{h_3^TB} + \ang{h_4^TB}.
\end{align*}
We recall (see \S\ref{sec:notation}) that $\ang{h^TB}$ denotes the $\F_p$-span of the linear forms $\ang{h^Tb_1, \dots, h^Tb_d}$.
\end{lemma}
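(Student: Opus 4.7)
The plan is to run two successive Cauchy--Schwarz manipulations on the Fourier-analytic quantity $\sum_{h\in \mathcal{H}} |\widehat{\Delta_h f}(\phi(h))|^2$, leveraging throughout the following crucial structural fact: by Lemma \ref{lem:struct-diff} the function $\Delta_u f$ is supported on a single coset of the subspace $V_u := \langle u^TB\rangle^\perp$, so that $|\widehat{\Delta_u f}(\ell)|$ depends only on $\ell$ modulo $\langle u^TB\rangle$ (equivalently, only on the restriction $\ell|_{V_u}\in V_u^*$). This coset-invariance is exactly what will produce the \emph{partial} span correction $\langle h_1^TB\rangle-\langle h_2^TB\rangle-\langle h_3^TB\rangle+\langle h_4^TB\rangle$ in the conclusion.

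Squaring the hypothesis gives $\sum_h |\widehat{\Delta_h f}(\phi(h))|^2 \geq \eta^3 |H|^3 p^{-4d}$. I then plan to expand this as an inner product over $(x,y)$, dualise by Cauchy--Schwarz in these two variables, and use the bound $\|f\|_2^2 \ll |H|p^{-d}$ from Corollary \ref{cor:quad-level-size}, to deduce the standard parallelogram estimate
\begin{equation*}
\sum_{(h_1,h_2)\in\mathcal{H}^2} \bigabs{\widehat{\Delta_{h_1-h_2}f}(\phi(h_1)-\phi(h_2))}^2 \gg \eta^6 |H|^4 p^{-6d}.
\end{equation*}
For each $u\in H$ and $\lambda\in V_u^*$ I next set $N_u(\lambda):=|\{(h_1,h_2)\in\mathcal{H}^2: h_1-h_2=u,\ (\phi(h_1)-\phi(h_2))|_{V_u}=\lambda\}|$ and $F(u,\lambda):=|\widehat{\Delta_u f}(\tilde\lambda)|^2\geq 0$ for any extension $\tilde\lambda\in H^*$ of $\lambda$; by the coset-invariance the previous sum equals $\sum_u\sum_\lambda N_u(\lambda)F(u,\lambda)$. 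A second Cauchy--Schwarz then gives
\begin{equation*}
\brac{\eta^6|H|^4 p^{-6d}}^2 \ll \Bigbrac{\sum_u\sum_\lambda N_u(\lambda)^2}\Bigbrac{\sum_u\sum_\lambda F(u,\lambda)^2}.
\end{equation*}

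The first factor is precisely the count of partial additive quadruples I am after: two pairs $(h_1,h_2)$ and $(h_3,h_4)$ sharing the same $(u,\lambda)$ assemble into $(h_1,h_2,h_3,h_4)\in\mathcal{H}^4$ with $h_1-h_2=h_3-h_4=u$ and $\phi(h_1)-\phi(h_2)-\phi(h_3)+\phi(h_4)\in V_u^\perp=\langle u^TB\rangle\subseteq \langle h_1^TB\rangle-\langle h_2^TB\rangle-\langle h_3^TB\rangle+\langle h_4^TB\rangle$. The second factor I control using the relative $U^2$-inverse estimate \eqref{eq:u2-quad-1} of Corollary \ref{cor:pseud-once-diff}: for $u$ outside an exceptional set of size $\ll|H|p^{d-R}$, one has $\sum_{\ell\in H^*}|\widehat{\Delta_u f}(\ell)|^4\ll |H|^3 p^{-8d}$. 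Since $|\widehat{\Delta_u f}(\ell)|$ is constant on each of the $|\langle u^TB\rangle|=p^d$ cosets over a given $\lambda$, dividing yields $\sum_\lambda F(u,\lambda)^2\ll |H|^3 p^{-9d}$ for good $u$, and summing over such $u$ gives $\sum_u\sum_\lambda F(u,\lambda)^2\ll |H|^4 p^{-9d}$. Rearranging, the partial-quadruple count is $\gg \eta^{12}|H|^4 p^{-3d}$, which under the rank hypothesis $R\gg d$ (forcing $|H|\geq p^R\gg p^{3d}$) is $\gg \eta^{O(1)}|H|^3$, as required.

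The principal obstacle will be the careful bookkeeping of the bad $u$ throughout: I need to verify both that their contribution to $\sum_u\sum_\lambda N_u(\lambda)F(u,\lambda)$ is at most half the lower bound $\eta^6|H|^4 p^{-6d}$, and that the Cauchy--Schwarz step can be run with sums restricted to good $u$. A quick trivial bound shows the bad contribution to $\sum N_u F$ is $\ll |H|^4 p^{-d-R}$, which is negligible precisely when $R\gg 5d+\log(2/\eta)$. This is the quantitative source of the alternative $R\ll d+\log(2/\eta)$ in the conclusion, provided I take the implied constant in the alternative sufficiently large to accommodate the several Cauchy--Schwarz losses.
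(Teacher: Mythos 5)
Your route is a genuine alternative to the paper's, and the architecture — square the hypothesis, Cauchy--Schwarz to a ``parallelogram'' sum, exploit the coset-invariance of $|\widehat{\Delta_u f}(\cdot)|$ modulo $\langle u^T B\rangle$ to bucket by $(u,\lambda)$, then Cauchy--Schwarz again against a restriction estimate — is attractive and largely correctly set up. The parallelogram identity $\sum_{x,y}|K(x,y)|^2 = \sum_{h_1,h_2\in\mathcal{H}}|\widehat{\Delta_{h_1-h_2}f}(\phi(h_1)-\phi(h_2))|^2$ does check out, and the bucketing by $(u,\lambda)$ with $\lambda\in V_u^*$ is legitimate precisely because $\widehat{\Delta_u f}$ depends only on $u$ and its modulus only on $\ell$ mod $\langle u^TB\rangle$. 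However, the final numbers do not close, and this is not a cosmetic slip: the approach genuinely falls short by a factor of $p^{\Theta(d)}$.

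Two concrete problems. First, an arithmetic error: the paper's $U^2$-norm is normalised so that $\|g\|_{U^2}^4 = \mathbb{E}_{\ell\in\hat H}|\hat g(\ell)|^4 = |H|^{-1}\sum_{\ell\in H^*}|\hat g(\ell)|^4$. From \eqref{eq:u2-quad-1} with $\|\widehat{\Delta_uf}\|_\infty\le|\Phi_u|\ll|H|p^{-2d}$ one gets $\|\Delta_uf\|_{U^2}^4\ll|H|^3p^{-8d}$, hence $\sum_{\ell\in H^*}|\widehat{\Delta_uf}(\ell)|^4\ll|H|^4p^{-8d}$, not $|H|^3p^{-8d}$ as you wrote. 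Second, and more seriously, even after fixing this, the chain delivers $\sum_u\sum_\lambda N_u(\lambda)^2\gg\eta^{12}|H|^3p^{-O(d)}$ rather than $\gg\eta^{O(1)}|H|^3$. Indeed your stated conclusion $\gg\eta^{12}|H|^4p^{-3d}$ cannot be right, since trivially $\sum_u\sum_\lambda N_u(\lambda)^2\le|\mathcal{H}|^3\le|H|^3$, and the rank hypothesis forces $|H|\ge p^R\gg p^{3d}$, which would make $\eta^{12}|H|^4p^{-3d}$ strictly larger than $|H|^3$ once $R$ is large — a contradiction. With the corrected second factor $\sum_u\sum_\lambda F(u,\lambda)^2\ll|H|^5p^{-8d}$ and your parallelogram lower bound $\eta^6|H|^4p^{-6d}$, the Cauchy--Schwarz gives only $\sum N^2\gg\eta^{12}|H|^3p^{-4d}$, leaving a fatal $p^{4d}$ deficit.

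The source of the loss is the first Cauchy--Schwarz: bounding $|\sum_{x,y}f(x)\overline{f(y)}K(x,y)|^2\le\|f\|_2^4\sum_{x,y}|K(x,y)|^2$ pays the full $\|f\|_2^4\asymp|Q^{-1}(0)|^2\asymp|H|^2p^{-2d}$ while discarding the fact that $K(x,y)$ is only relevant on $Q^{-1}(0)\times Q^{-1}(0)$. If one retains the support restriction on $(x,y)$ then the $K$-sum decomposes not in terms of $\widehat{\Delta_{h_1-h_2}f}$ but in terms of $\widehat{G_{h_1,h_2}}$ with $G_{h_1,h_2}=1_{Q^{-1}(0)}\cdot\overline{f(\cdot+h_1)}f(\cdot+h_2)$, supported on the twice-differenced level set $Q^{-1}(0)\cap(Q^{-1}(0)-h_1)\cap(Q^{-1}(0)-h_2)$; this object depends on $(h_1,h_2)$ individually (not just on $u$) and carries a larger invariance group $\langle h_1^TB,h_2^TB\rangle$, breaking your $(u,\lambda)$-bucketing. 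The paper sidesteps this tension entirely by not passing through $\widehat{\Delta_uf}$ at all: after two Cauchy--Schwarz applications it has a sum in which all four level-set indicators appear on the inner $x$-variable, and then extracts the trivial-frequency term via the pseudorandomness of the thrice-differenced level set (Corollary \ref{cor:pseud-thrice-diff}), with no intermediate $p^{O(d)}$ loss. To salvage your route you would at the very least need a parallelogram lower bound of order $\eta^{O(1)}|H|^4p^{-4d}$, which your Cauchy--Schwarz does not deliver and which I do not see how to obtain without effectively reproducing the paper's argument.
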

\begin{proof}
Summing over $\mathcal{H}$ and opening absolute values by introducing a modulation, there exists a 1-bounded function $g : H \to \C$ with $\supp(g) \subset \mathcal{H}$  such that
$$
\eta^2 |H|^2p^{-2d} \leq \sum_{x }f(x) \sum_h g(h)\overline{f(x+h)}e_p(\phi(h)x).
$$
By Corollary \ref{cor:quad-level-size}, we may assume that $\sum_x |f(x)|^2 \ll |H|p^{-d}$, so applying Cauchy-Schwarz and changing variables gives
\begin{multline*}
\eta^{O(1)} |H|^3p^{-3d} \ll \sum_{x,h_1, h_2 }1_{Q^{-1}(0)}(x)  g(h_1)\overline{g(h_2)}\overline{f(x+h_1)}f(x+h_2)e_p\brac{\sqbrac{\phi(h_1)-\phi(h_2)}x}\\
	= \sum_{x,y_1, y_2 }1_{Q^{-1}(0)}(x)  g(y_1-x)\overline{g(y_2-x)}\overline{f(y_1)}f(y_2)e_p\brac{\sqbrac{\phi(y_1-x)-\phi(y_2-x)}x}.
\end{multline*}
Applying Cauchy-Schwarz again to remove the remaining $f$ weights, we have
\begin{align*}
\eta^{O(1)} |H|^4p^{-4d} \ll &\sum_{x_1,x_2,y_1, y_2 } 1_{Q^{-1}(0)}(x_1)1_{Q^{-1}(0)}(x_2)1_{Q^{-1}(0)}(y_1)1_{Q^{-1}(0)}(y_2) \\ &\qquad \times g(y_1-x_1)\overline{g(y_2-x_1)}\overline{g(y_1-x_2)}g(y_2-x_2)\\  &\qquad \times e_p\brac{\sqbrac{\phi(y_1-x_1)-\phi(y_2-x_1)}x_1-\sqbrac{\phi(y_1-x_2)-\phi(y_2-x_2)}x_2}.
\end{align*}
We reparametrise, setting $y := y_1-x_1$, $y+h_1:=y_2-x_1$, $y+h_2:=y_1-x_2$ (so that $y+h_1+h_2= y_2-x_2$) and $x := x_1$, then one can check that the latter is equal to
\begin{equation}\label{eq:many-add-sum}
\begin{split}
 \sum_{x,y,h_1,h_2 } 1_{Q^{-1}(0)}(x)1_{Q^{-1}(0)}&(x-h_2)1_{Q^{-1}(0)}(x+y)1_{Q^{-1}(0)}(x+y+h_1) \\  \times g(y)\overline{g(y+h_1)} &\overline{g(y+h_2)}g(y+h_1+h_2)\\  \times &e_p\brac{\sqbrac{\phi(y)-\phi(y+h_1)}x-\sqbrac{\phi(y+h_2)-\phi(y+h_1+h_2)}(x-h_2)}.
\end{split}
\end{equation}

Write
$$
\Psi_{y,h_1,h_2} := Q^{-1}(0) \cap \sqbrac{Q^{-1}(0) + h_2}\cap \sqbrac{Q^{-1}(0)-y}\cap \sqbrac{Q^{-1}(0)-y-h_1}.
$$
On employing Lemma \ref{lem:struct-diff} we see that, in the notation of Corollary \ref{cor:pseud-thrice-diff}, for any $x_0 \in \Psi_{y,h_1,h_2}$ we have $\Psi_{y, h_1, h_2} = \Phi_{y, y+h_1, -h_2}(x_0)$, and by the triangle inequality we have
\begin{multline*}
\eta^{O(1)} |H|^4p^{-4d} \ll  \sum_{y,h_1,h_2} 1_\mathcal{H}(y)1_\mathcal{H}(y+h_1)1_\mathcal{H}(y+h_2)1_\mathcal{H}(y+h_1+h_2)\times \\\Bigabs{\sum_x 1_{\Psi_{y,h_1,h_2}}(x) e_p\brac{\sqbrac{\phi(y)-\phi(y+h_1)-\phi(y+h_2)+\phi(y+h_1+h_2)}x}}
\end{multline*}
Employing Corollary \ref{cor:pseud-thrice-diff} to estimate the inner sum, we deduce that either $R \ll d + \log(2/\eta)$ or 
\begin{multline*}
\eta^{O(1)} |H|^3 \ll  \sum_{y,h_1,h_2} 1_\mathcal{H}(y)1_\mathcal{H}(y+h_1)1_\mathcal{H}(y+h_2)1_\mathcal{H}(y+h_1+h_2)\times \\1_{\phi(y)-\phi(y+h_1)-\phi(y+h_2)+\phi(y+h_1+h_2) \in \ang{y^TB, h_1^TB, h_2^TB}}.\qedhere
\end{multline*}
\end{proof}

\section{Many genuine additive quadruples}\label{sec:many-genuine}
The purpose of this section is to prove the following result, obtained independently by Gowers, Green and Tao \cite{gowers-additive} (preceding our work by almost two decades).
\begin{lemma}[Many partial additive quadruples give many genuine additive quadruples]\label{lem:partial-to-genuine}
Let $H$ be a finite vector space over $\F_p$ and let $B = (b_1, \dots, b_d)$ be a $d$-tuple of symmetric bilinear forms on $H$ whose rank (as a $d$-tuple) is at least $R$. Suppose that there exists a set $\mathcal{H} \subset H$ and a function $\phi : \mathcal{H} \to H^*$  such that there are at least $\delta |H|^3$ triples $(y_1,y_2, h)$ satisfying  $y_1, y_2, y_1+h, y_2+h \in \mathcal{H}$ and\footnote{Since the $b_i$ are symmetric, we recall (see \S\ref{sec:notation}) that $y^TB$ and $By$ give the same $d$-tuples in $(H^*)^d$.}
\begin{equation*}\label{eq:partial-quad}
\phi(y_1) - \phi(y_1+h) - \phi(y_2) + \phi(y_2+h) \in \ang{B y_1,By_2, Bh}.
\end{equation*}
Then either $R\ll d + \log(2/\delta)$ or  there exist coefficients $\mu(y)\in \F_p^d$ such that, on setting 
$$
\phi'(y):= \phi(y) + \mu(y)By ,
$$ there are at least $\gg \delta^{O(1)} |H|^3$ triples $(y_1,y_2, h)$ satisfying  $y_1, y_2, y_1+h, y_2+h \in \mathcal{H}$ and
\begin{equation*}
\phi'(y_1) -\phi'(y_1+h) - \phi'(y_2) + \phi'(y_2+h) =0.
\end{equation*}
\end{lemma}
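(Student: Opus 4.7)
\medskip

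\noindent\textbf{Proof proposal.}

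The plan is to recast the partial additive quadruple condition as a one-dimensional "discrete derivative" condition, and then use a Cauchy–Schwarz/pigeonhole argument in the spirit of Balog–Szemerédi–Gowers to exhibit a correction $\mu$.

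First I would note that by Lemma~\ref{lem:size-of-diff-sub}, for all but $O(|H|^3 p^{3d-R})$ triples $(y_1,y_2,h)$, the elements $By_1,By_2,Bh \in (H^*)^d$ are linearly independent. Provided $R \gg d + \log(2/\delta)$, we may restrict the $\gg \delta|H|^3$ partial quadruples to these ``generic'' triples without loss. On such triples, the hypothesis provides \emph{unique} scalars $\lambda_1,\lambda_2,\lambda_3 \in \F_p^d$ (depending on the triple) for which
\[
\phi(y_1) - \phi(y_1+h) - \phi(y_2) + \phi(y_2+h) = \lambda_1 By_1 + \lambda_2 By_2 + \lambda_3 Bh.
\]
Setting $\phi'(y) = \phi(y) + \mu(y)By$ and expanding $B(y+h) = By + Bh$, one checks that the additive quadruple relation $\phi'(y_1) - \phi'(y_1+h) - \phi'(y_2) + \phi'(y_2+h) = 0$ holds exactly when
\[
\mu(y_1+h) - \mu(y_1) = \lambda_1, \qquad \mu(y_2+h) - \mu(y_2) = -\lambda_2, \qquad \mu(y_1+h) - \mu(y_2+h) = \lambda_3.
\]
Thus the problem reduces to finding $\mu$ whose ``discrete derivative'' $\partial_h\mu(y) := \mu(y+h) - \mu(y)$ matches the prescribed $\lambda_1(y_1,y_2,h)$ (and its analogues) for many quadruples.

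The key point is that if the hypothesis only encoded information about a single function $\lambda_1(y_1,h)$ depending on the edge $(y_1,y_1+h)$, then defining $\mu$ via a spanning path from a fixed basepoint $y_0 \in \mathcal{H}$ would yield a consistent function. The difficulty is that $\lambda_1$ a priori depends on $y_2$, so I would use Cauchy–Schwarz/pigeonhole to extract a ``canonical'' value. Specifically, I would double the hypothesis: for many pairs of partial quadruples $(y_1,y_2,h)$ and $(y_1,y_2',h)$ sharing the edge $(y_1,y_1+h)$, the linear independence of the $Bz$'s implies $\lambda_1(y_1,y_2,h) = \lambda_1(y_1,y_2',h)$, since the left-hand side $\phi(y_1) - \phi(y_1+h)$ is the same in both. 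Hence for $\gg \delta^{O(1)}|H|$ pairs $(y_1,h)$ there is a ``popular value'' $\lambda_*(y_1,h)$ taken for $\gg \delta^{O(1)}|H|$ choices of $y_2$. A similar popularity argument applies to $\lambda_2,\lambda_3$.

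Given this, I would define $\mu : \mathcal{H} \to \F_p^d$ by fixing a pivot $y_0 \in \mathcal{H}$ (chosen by averaging so that it is ``popular'' for most $y$) and setting $\mu(y) := \lambda_*(y_0, y-y_0)$ whenever the relevant popularity holds, and arbitrarily otherwise. A final Cauchy–Schwarz / BSG-style argument then shows that for $\gg \delta^{O(1)}|H|^3$ triples $(y_1,y_2,h)$ the three consistency conditions on $\mu$ all hold simultaneously: the first two follow from the definition via $y_0$, while the third (the ``diagonal'' condition) follows by composing a short path of partial quadruples through $y_0$. The main obstacle I anticipate is the bookkeeping in this last step: tracking which choices of auxiliary variables preserve genericity (so that the unique-scalar rigidity remains available) and ensuring that the cumulative loss in the density of good quadruples remains polynomial in $\delta$. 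This is the standard ``paths of length two through a pivot'' trick from the Gowers–Green–Tao proof, but here it must be carried out modulo the varying span $\langle By_1, By_2, Bh \rangle$, with Lemma~\ref{lem:size-of-diff-sub} invoked repeatedly to keep the errors under control.
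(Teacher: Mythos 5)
Your high-level plan — exploit Lemma~\ref{lem:size-of-diff-sub} to restrict to generic triples where $By_1, By_2, Bh$ are linearly independent, extract unique coefficients $\lambda_1, \lambda_2, \lambda_3$, and then construct a correction $\mu$ by Cauchy--Schwarz and popularity — is the right starting point, and your derivation of the three target identities $\mu(y_1+h)-\mu(y_1)=\lambda_1$, $\mu(y_2+h)-\mu(y_2)=-\lambda_2$, $\mu(y_1+h)-\mu(y_2+h)=\lambda_3$ (valid on the generic set) is correct. However, two of the key steps contain genuine gaps.

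First, the claim that doubling the $y_2$ variable alone forces $\lambda_1(y_1,y_2,h)=\lambda_1(y_1,y_2',h)$ ``since the left-hand side $\phi(y_1)-\phi(y_1+h)$ is the same in both'' does not go through. After subtracting the two partial-quadruple identities one obtains only that
\[
\bigsqbrac{\lambda_1(y_1,y_2,h)-\lambda_1(y_1,y_2',h)}By_1 \in \ang{By_2,By_2',Bh},
\]
and to upgrade this to an equality of scalars one must also double the $y_1$ variable, so that the vanishing of the $By_1$-component follows from linear independence of the \emph{five} tuples $By_1, By_1', By_2, By_2', Bh$. This is precisely the two Cauchy--Schwarz steps in the paper's Lemma~\ref{lem:dep-2to3}. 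As written, your argument only uses four of these five and cannot isolate the $By_1$ coefficient.

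Second, and more seriously, your pivot construction $\mu(y):=\lambda_*(y_0,y-y_0)$ is designed so that $\partial_h \mu$ matches $\lambda_1$, but you then need the second and third identities to hold for the \emph{same} $\mu$ on a dense set of quadruples. Subtracting the first two identities from the third gives the constraint $\mu(y_1)-\mu(y_2)=\lambda_3-\lambda_1-\lambda_2$; the left side is independent of $h$, whereas $\lambda_3$ is a priori a function of all three variables, so this imposes a non-trivial rigidity on $\lambda_3$ that your popularity argument does not establish. The paper does not attempt to solve all three conditions by a pivot; instead, Lemma~\ref{lem:dep-1to3} uses the algebraic identity
\[
-\sqbrac{\mu(y_1)-\mu(y_2)}Bh = \mu(y_1)By_1 - \mu(y_1)B(y_1+h)-\mu(y_2)By_2+\mu(y_2)B(y_2+h)
\]
to absorb the $Bh$-coefficient into the $\phi'$-correction, pins down $\mu$ by pigeonholing on a single good shift $h'$ (not by a pivot vertex), and then Lemma~\ref{lem:dep-0to3} shows the residual $\lambda'$-coefficients vanish for many quadruples after a further Cauchy--Schwarz. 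The ``bookkeeping'' you defer is exactly where the argument lives, and the paper's route is structurally different from (and more robust than) the path-through-a-pivot sketch; you would need to either substantially re-engineer the pivot construction to handle the $h$-dependence of $\lambda_3$, or fall back on the paper's absorb-and-cancel strategy.
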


The proof of this result relies on three applications of the same idea, each using Cauchy-Schwarz and the linear independence of the coordinates of $By_1, By_2, By_3, By_4, By_5$ for a generic quintuple $(y_1, y_2, y_3, y_4, y_5)$ (see Lemma \ref{lem:size-of-diff-sub}). We separate the proof into three lemmas in order to ease notation and clarify ideas.

Throughout this section we write $By$ for the $d$-tuple $(b_1y, \dots, b_dy)$, which (by symmetry) coincides with the $d$-tuple of linear maps given by $(y^Tb_1, \dots, y^Tb_d)$. For $\lambda \in \F_p^d$ write $\lambda By$ for $\lambda_1b_1y+ \dots + \lambda_d b_dy$. We write $\ang{By}$ for the span $\ang{b_1y, \dots, b_dy}= \set{\lambda By : \lambda \in \F_p^d} \leq H^*$.

\begin{lemma}[Removing dependence on one out of three variables]\label{lem:dep-2to3}
Let $H$ be a finite vector space over $\F_p$ and let $B = (b_1, \dots, b_d)$ be symmetric bilinear forms on $H$ whose rank (as a $d$-tuple) is at least $R$. Suppose that there exists a set $\mathcal{H} \subset H$, function $\phi : \mathcal{H} \to H^*$ and coefficients $\lambda_1(y_1, y_2, h)$, $\lambda_2(y_1, y_2, h)$, $\mu(y_1,y_2, h) \in \F_p^d$ such that there are at least $\delta |H|^3$ triples $(y_1,y_2, h)$ satisfying  $y_1, y_2, y_1+h, y_2+h \in \mathcal{H}$ and
\begin{multline*}
\phi(y_1) - \phi(y_1+h) - \phi(y_2) + \phi(y_2+h) = \\ \lambda_1(y_1, y_2,h)B y_1 - \lambda_2(y_1, y_2,h) By_2 + \mu(y_1, y_2, h) Bh.
\end{multline*}
Then either $R\ll d + \log(2/\delta)$ or there exist coefficients $\lambda(y,h)$, $\mu(y,h)\in \F_p^d$ such that there are at least $\trecip{2}\delta^4 |H|^3$ triples $(y_1,y_2, h)$ satisfying  $y_1, y_2, y_1+h, y_2+h \in \mathcal{H}$ and
\begin{multline*}
\phi(y_1) - \phi(y_1+h) - \phi(y_2) + \phi(y_2+h) \\ = \lambda(y_1,h)B (y_1+h) - \lambda(y_2, h) B(y_2+h) - \sqbrac{\mu(y_1,h)-\mu(y_2,h)}Bh.
\end{multline*}
\end{lemma}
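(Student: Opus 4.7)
My plan is to run the standard coefficient-collapse argument from Gowers' inverse $U^3$ proof: two applications of Cauchy--Schwarz combined with Lemma~\ref{lem:size-of-diff-sub} will dissolve the $(y_1,y_2)$-dependence of $\lambda_1,\lambda_2,\mu$ into single-variable dependence. Writing $F_h(y):=\phi(y)-\phi(y+h)$, the hypothesis reads
\[
F_h(y_1)-F_h(y_2)=\lambda_1\, By_1-\lambda_2\, By_2+\mu\, Bh,
\]
and the sought conclusion, on expanding $B(y_i+h)=By_i+Bh$, is the same identity with $\lambda_1=\lambda(y_1,h)$, $\lambda_2=\lambda(y_2,h)$, and $\mu(y_1,y_2,h)=\tilde\mu(y_1,h)-\tilde\mu(y_2,h)$ for some $\tilde\mu$. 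So it suffices to extract this single-variable structure for a $\delta^{O(1)}$-density set of triples.

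First I would apply Lemma~\ref{lem:size-of-diff-sub} with $k=3$ to pass to the subset $T^*\subset T$ on which $By_1,By_2,Bh$ contribute $3d$ independent linear forms; either $R\ll d+\log(2/\delta)$ or $|T^*|\geq\tfrac{\delta}{2}|H|^3$, and on $T^*$ the triple $(\lambda_1,\lambda_2,\mu)$ is uniquely determined by $\phi$. Two successive Cauchy--Schwarz steps---first convexity of $\sum_{y_1,h}|\{y:(y_1,y,h)\in T^*\}|^2$, then convexity of $\sum_{y_2,y_3,h}|\{y:(y,y_2,h),(y,y_3,h)\in T^*\}|^2$---produce $\geq(\delta/2)^4|H|^5$ quintuples $(y_1,y_1',y_2,y_3,h)$ for which all four triples $(y_1,y_2,h),(y_1,y_3,h),(y_1',y_2,h),(y_1',y_3,h)$ lie in $T^*$. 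A further invocation of Lemma~\ref{lem:size-of-diff-sub} with $k=5$ (at the cost of the same rank fallback) restricts to those quintuples for which $By_1,By_1',By_2,By_3,Bh$ contribute $5d$ independent forms.

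For each such quintuple I would express $F_h(y_3)-F_h(y_2)$ in two ways---from the $(y_1,y_j,h)$ pair and from its primed counterpart---and equate. The difference lies in the $\F_p$-span of the $5d$ independent coordinates of $By_1,By_1',By_2,By_3,Bh$, so the coefficient block of each of these five vectors vanishes separately, yielding
\begin{align*}
\lambda_1(y_1,y_2,h) &= \lambda_1(y_1,y_3,h),\\
\lambda_2(y_1,y_2,h) &= \lambda_2(y_1',y_2,h),\\
\mu(y_1,y_2,h)-\mu(y_1,y_3,h) &= \mu(y_1',y_2,h)-\mu(y_1',y_3,h),
\end{align*}
together with the primed analogues. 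A popular-value pigeonhole then defines $\alpha(y,h)$ as the modal value of $\lambda_1(y,\cdot,h)$ on its $T^*$-support, capturing $\gg\delta^{O(1)}|H|^3$ triples by the first identity; the reflection symmetry $\lambda_1(y_2,y_1,h)=\lambda_2(y_1,y_2,h)$ and $\mu(y_2,y_1,h)=-\mu(y_1,y_2,h)$, which holds on $T^*$ by uniqueness of coefficients, identifies $\alpha$ with the modal value of $\lambda_2(\cdot,y,h)$ and, combined with the mixed-difference identity, exhibits $\mu$ as a coboundary $\tilde\mu(y_1,h)-\tilde\mu(y_2,h)$ on a comparably large set. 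Setting $\lambda:=\alpha$ and $\mu:=\lambda-\tilde\mu$ in the target completes the proof.

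The main technical hurdle will be carrying out the pigeonhole extractions for $\lambda_1,\lambda_2$ and $\mu$ \emph{simultaneously} so that all three popular-value identities hold on a \emph{common} triple set of density $\gg\delta^{O(1)}$; this demands some care in iterating Cauchy--Schwarz and intersecting large fibres in the coefficient fibrations, but introduces no new idea beyond standard popularity arguments.
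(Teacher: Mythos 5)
Your two Cauchy--Schwarz doublings, the invocation of Lemma~\ref{lem:size-of-diff-sub} with $k=5$, and the resulting coefficient identities ($\lambda_1(y_1,y_2,h)=\lambda_1(y_1,y_3,h)$, $\lambda_2(y_1,y_2,h)=\lambda_2(y_1',y_2,h)$, and the mixed-difference identity for $\mu$) all match the paper's argument. The divergence, and the gap, is in the final extraction. You propose a popular-value pigeonhole: define $\alpha(y,h)$ as the modal value of $\lambda_1(y,\cdot,h)$, identify it with the modal value of $\lambda_2(\cdot,y,h)$ via reflection, and conclude. But the target identity, expanded, forces three conditions to hold at the \emph{same} triple $(y_1,y_2,h)$: $\lambda_1(y_1,y_2,h)=\alpha(y_1,h)$, $\lambda_2(y_1,y_2,h)=\alpha(y_2,h)$, and the coboundary form for $\mu(y_1,y_2,h)$. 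Each is a $\delta^{O(1)}$-density condition (roughly, $y_2$ lies in the popular fibre $P(y_1,h)$ of $\lambda_1(y_1,\cdot,h)$, and $y_1$ lies in $P(y_2,h)$), and two $\delta^{O(1)}$-density subsets of $H^3$ need have no common element; nothing you have written forces these popular fibres to overlap. You flag this as ``the main technical hurdle'' but characterise it as ``no new idea beyond standard popularity arguments'', which is where the proof is incomplete: resolving the intersection problem requires additional structural input, not just more care. (There is also a smaller issue: the reflection symmetry $\lambda_1(y_2,y_1,h)=\lambda_2(y_1,y_2,h)$ holds only where \emph{both} $(y_1,y_2,h)$ and $(y_2,y_1,h)$ lie in $T^*$, so one must first symmetrise the coefficient functions; this is routine but is not addressed.)

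The paper avoids the issue entirely. After doubling $y_2$ and $y_1$ and using Lemma~\ref{lem:size-of-diff-sub}, one obtains $\lambda_1(y_1,y_2,h)=\lambda_1(y_1,y_2',h)$ for $\gg\delta^4|H|^5$ quintuples. Now simply \emph{fix} a single popular value of $y_1$. For that fixed $y_1$, the doubled identity (the paper's \eqref{eq:doubled-y2}, with the $By_1$ term cancelling) reads
\[
\phi(y_2')-\phi(y_2'+h)-\phi(y_2)+\phi(y_2+h)=\lambda_2(y_1,y_2',h)By_2'-\lambda_2(y_1,y_2,h)By_2+\bigsqbrac{\mu(y_1,y_2,h)-\mu(y_1,y_2',h)}Bh,
\]
and since $y_1$ is now a constant, the coefficient of $By_2'$ and that of $By_2$ are tautologically the \emph{same} two-variable function $\lambda(y,h):=\lambda_2(y_1,y,h)$ evaluated at $y_2'$ and $y_2$, while the $Bh$-coefficient is tautologically a coboundary in $y$. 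No modal extraction, no reflection symmetry, and no intersection of popular fibres is needed; the single-variable structure and the equality of the two $\lambda$'s are automatic once $y_1$ is frozen. This one-line reorganisation is the idea your proposal is missing.
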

\begin{proof}
It follows from the Cauchy-Schwarz inequality that at least $\delta^2 |H|^4$ quadruples $(y_1, y_2, y_2', h)$ satisfy $y_2,y_2', y_2+h, y_2'+h \in \mathcal{H}$ and
\begin{multline}\label{eq:doubled-y2}
\phi(y_2') - \phi(y_2'+h) - \phi(y_2 ) + \phi(y_2+h) 
 = \lambda_1(y_1, y_2, h) By_1- \lambda_2(y_1, y_2, h) By_2 + \mu(y_1, y_2, h) Bh\\ - \sqbrac{\lambda_1(y_1, y_2', h) By_1- \lambda_2(y_1, y_2', h) By_2' + \mu(y_1, y_2', h) Bh}\\
= \lambda_2(y_1, y_2', h) By_2'- \lambda_2(y_1, y_2, h) By_2 + \sqbrac{\mu(y_1, y_2, h)-\mu(y_1, y_2', h)} Bh\\  + \sqbrac{\lambda_1(y_1, y_2, h)-\lambda_1(y_1, y_2', h)} By_1.
\end{multline}
Re-applying the Cauchy-Schwarz inequality to double the $y_1$ variable, for at least $\delta^4 |H|^5$ quintuples $(y_1, y_1', y_2, y_2', h)$ we have $y_2,y_2', y_2+h, y_2'+h \in \mathcal{H}$ along with the identity \eqref{eq:doubled-y2}  and 
\begin{multline*}
\lambda_2(y_1, y_2', h) By_2'- \lambda_2(y_1, y_2, h) By_2 + \sqbrac{\mu(y_1, y_2, h)-\mu(y_1, y_2', h)} Bh\\  + \sqbrac{\lambda_1(y_1, y_2, h)-\lambda_1(y_1, y_2', h)} By_1\\
= \lambda_2(y_1', y_2', h) By_2'- \lambda_2(y_1', y_2, h) By_2 + \sqbrac{\mu(y_1', y_2, h)-\mu(y_1', y_2', h)} Bh\\  + \sqbrac{\lambda_1(y_1', y_2, h)-\lambda_1(y_1', y_2', h)} By_1'.
\end{multline*}
In particular, we have the linear dependence
\begin{equation}\label{eq:y2'-lin-dep}
\sqbrac{\lambda_1(y_1, y_2, h)-\lambda_1(y_1, y_2', h)} By_1 \in \ang{By_1', By_2, By_2', Bh}.
\end{equation}

By Lemma \ref{lem:size-of-diff-sub}, for all but $|H|^5p^{5d-R}$ choices of $(y_1, y_1', y_2, y_2', h)$, the $5d$ linear forms which constitute the coordinates of $By_1$, $By_1'$, $By_2$, $By_2'$, $Bh$ are linearly independent. For such choices, the linear dependence \eqref{eq:y2'-lin-dep} can only hold if $\lambda_1(y_1, y_2, h)=\lambda_1(y_1, y_2', h)$. Hence either $R \ll d + \log(2/\delta)$ or there exists $y_1$ such that for at least $\trecip{2}\delta^4 |H|^3$ triples $( y_2, y_2', h)$ we have $y_2,y_2', y_2+h, y_2'+h \in \mathcal{H}$ and
\begin{multline*}
\phi(y_2') - \phi(y_2'+h) - \phi(y_2 ) + \phi(y_2+h) \\
= \lambda_2(y_1, y_2', h) By_2'- \lambda_2(y_1, y_2, h) By_2 
+ \sqbrac{\mu(y_1, y_2, h)-\mu(y_1, y_2', h)} Bh
\\
= \lambda_2(y_1, y_2', h)B(y_2'+h)- \lambda_2(y_1, y_2, h)B(y_2+h) \\
+ \sqbrac{\mu(y_1, y_2, h)-\lambda_2(y_1,y_2,h)-\mu(y_1, y_2', h)+\lambda_2(y_1, y_2', h)} Bh.\qedhere
\end{multline*}
\end{proof}

\begin{lemma}[Removing dependence on two out of three variables]\label{lem:dep-1to3}
Let $H$ be a finite vector space over $\F_p$ and let $B = (b_1, \dots, b_d)$ be symmetric bilinear forms on $H$ whose rank (as a $d$-tuple) is at least $R$. Suppose that there exists a set $\mathcal{H} \subset H$, function $\phi : \mathcal{H} \to H^*$ and coefficients $\lambda(y,h)$, $\mu(y,h)\in \F_p^d$ such that there are at least $\delta |H|^3$ triples $(y_1,y_2, h)$ satisfying  $y_1, y_2, y_1+h, y_2+h \in \mathcal{H}$ and
\begin{multline*}
\phi(y_1) - \phi(y_1+h) - \phi(y_2) + \phi(y_2+h) \\ = \lambda(y_1,h)B (y_1+h) - \lambda(y_2, h) B(y_2+h) - \sqbrac{\mu(y_1,h)-\mu(y_2,h)}Bh.
\end{multline*}
Then either $R\ll d + \log(2/\delta)$ or there exist coefficients $\lambda' (y,h), \mu(y) \in \F_p^d$ such that, on setting $\phi'(y) := \phi(y)+\mu(y)By$, there are at least $\trecip{2}\delta^4 |H|^3$ triples $(y_1,y_2, h)$ satisfying  $y_1, y_2, y_1+h, y_2+h \in \mathcal{H}$ and
\begin{equation*}
\phi'(y_1) - \phi'(y_1+h) - \phi'(y_2) + \phi'(y_2+h) =\\  \lambda'(y_1,h) B(y_1+h) - \lambda'(y_2,h)B (y_2+h).
\end{equation*}
\end{lemma}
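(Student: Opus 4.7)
The plan is to show that the coefficient $\mu(y,h)$ splits (approximately) as $\mu(y,h) = \nu(y) + \alpha(h)$. Once that is achieved, the $\alpha$-part cancels from the $[\mu(y_1,h)-\mu(y_2,h)]Bh$ term, so $\mu(y,h)$ may be replaced by $\nu(y)$ in the hypothesis. Defining $\phi'(y) := \phi(y) - \nu(y)By$ (so the lemma's $\mu(y) = -\nu(y)$) and expanding using $Bh = B(y+h) - By$, the hypothesis rewrites in the target form with $\lambda'(y,h) := \lambda(y,h) - \nu(y) + \nu(y+h)$.

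To produce this splitting, I would apply two Cauchy--Schwarz doublings in the spirit of Lemma \ref{lem:dep-2to3}---doubling $h$ first and then $y_1$---to obtain $\gg \delta^4 |H|^5$ quintuples $(y_1,y_1',y_2,h_1,h_2)$ at which the hypothesis holds at all four ``box'' triples $(y_i,y_2,h_j)$ with $i\in\{1,1'\}$, $j\in\{1,2\}$. Summing those four instances with signs $(+,-,-,+)$ annihilates every $\phi$-term involving $y_2$, leaving the parallelogram relation
\[
-\phi(y_1+h_1) + \phi(y_1+h_2) + \phi(y_1'+h_1) - \phi(y_1'+h_2) = \mathrm{RHS}_{(\dagger)},
\]
where $\mathrm{RHS}_{(\dagger)}$ is an explicit combination of $B(y_i+h_j)$'s together with the $\mu$-differences $[\mu(y_i,h_j)-\mu(y_{i'},h_j)]Bh_j$. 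The LHS coincides, up to sign, with the LHS of the hypothesis applied at the \emph{shifted} triple $(\tilde y_1,\tilde y_2,\tilde h) := (y_1+h_1,\, y_1'+h_1,\, h_2-h_1)$.

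Equating these two expressions for the parallelogram and expanding everything in the basis $B(y_1+h_1), B(y_1'+h_1), Bh_1, B(h_2-h_1)$---via $B(y_i+h_2) = B(y_i+h_1) + B(h_2-h_1)$ and $Bh_2 = Bh_1 + B(h_2-h_1)$---produces a linear relation whose coefficients must all vanish by the generic linear independence of those four $B$-vectors (Lemma \ref{lem:size-of-diff-sub}, valid outside a set of size $|H|^4 p^{4d-R}$). The $B(y_i+h_1)$ coefficients give consistency conditions on $\lambda$, while the $Bh_1$ and $Bh_2$ coefficients, \emph{added together}, collapse to the key bilinear vanishing
\[
\mu(y_1,h_1) - \mu(y_1',h_1) = \mu(y_1,h_2) - \mu(y_1',h_2).
\]
Pigeonholing on this cocycle identity yields the splitting $\mu(y,h) = \nu(y) + \alpha(h)$ on a set of density $\gg \delta^{O(1)}$, after which the first paragraph completes the argument.

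The main obstacle is the coupling between $(\dagger)$---which requires the hypothesis at four ``box'' triples sharing a common $y_2$---and the shifted hypothesis, which is a fifth, structurally independent condition. A naive product-of-densities estimate only gives $\gg \delta^5|H|^4$ quadruples at which both hold, whereas to preserve the $\delta^4$ bound stated in the lemma the Cauchy--Schwarz must be organised so that the shifted hypothesis enters as one of the factors from the outset, rather than being grafted on at the end; the rank-drop coming from the exceptional set in Lemma \ref{lem:size-of-diff-sub} is then absorbed into the $R \ll d + \log(2/\delta)$ alternative.
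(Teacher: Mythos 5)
Your proposal has a genuine gap, and you correctly identify where it lies but do not resolve it. After doubling $h$ and $y_1$ by Cauchy--Schwarz, the parallelogram
\[
-\phi(y_1+h_1) + \phi(y_1+h_2) + \phi(y_1'+h_1) - \phi(y_1'+h_2)
\]
is not in the canonical form $\phi(\tilde y_1) - \phi(\tilde y_1+\tilde h) - \phi(\tilde y_2) + \phi(\tilde y_2+\tilde h)$ \emph{with the hypothesis known to hold} at $(\tilde y_1,\tilde y_2,\tilde h)$. You need that fifth, structurally independent instance of the hypothesis, and the Cauchy--Schwarz doublings do not furnish it: they only guarantee the hypothesis at the four box triples. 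Your remark that ``the Cauchy--Schwarz must be organised so that the shifted hypothesis enters as one of the factors from the outset'' is the right instinct, but as stated it is a wish rather than an argument, and it is not obvious how to carry it out.

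The paper avoids this problem by never leaving canonical form. After the first Cauchy--Schwarz doubling of $h$, it \emph{re-parametrises} ($y_i \mapsto y_i - h'$, then $h \mapsto h+h'$) so that the left-hand side is once again exactly $\phi(y_1)-\phi(y_1+h)-\phi(y_2)+\phi(y_2+h)$, while the right-hand side now carries an auxiliary parameter $h'$. The second Cauchy--Schwarz then doubles $h'$: since the left-hand side is identical for $h'$ and $h''$, subtraction eliminates every $\phi$-term for free, producing a relation purely among the $B$-coefficients. Lemma~\ref{lem:size-of-diff-sub} (for five generically independent vectors $By_1,By_2,Bh,Bh',Bh''$) then forces the $\mu$-coefficient to be constant in $h'$. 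One then \emph{fixes} a good $h'$ and reads off $\mu(y)$ and $\lambda'(y,h)$ directly by algebraic manipulation; there is no need to establish (or pigeonhole for) a global cocycle splitting $\mu(y,h)=\nu(y)+\alpha(h)$. Your first paragraph (the rewriting of $\phi'$ and the formula $\lambda'(y,h)=\lambda(y,h)-\nu(y)+\nu(y+h)$) is correct conditional on the splitting, but the splitting itself is a stronger conclusion than the paper needs, and, more importantly, you cannot reach it by the route you outline without first filling the gap above.
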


\begin{proof}
Write $\mu(y_1,y_2, h) := \mu(y_1,h)-\mu(y_2,h)$. By the Cauchy-Schwarz inequality, at least $\delta^2 |H|^4$ quadruples $(y_1, y_2, h,h')$ satisfy $y_1+h, y_2+h, y_2+h', y_2+h' \in \mathcal{H}$ and
\begin{multline*}
\phi(y_1+h') - \phi(y_1+h) - \phi(y_2 +h') + \phi(y_2+h) 
 = \\
 \lambda(y_1,h)B (y_1+h) - \lambda(y_2, h) B(y_2+h) - \mu(y_1,y_2, h)Bh\\ 
 - \lambda(y_1,h')B (y_1+h') + \lambda(y_2, h') B(y_2+h') + \mu(y_1,y_2, h')Bh'.
\end{multline*}

Re-parametrising variables, at least $\delta^2 |H|^4$ quadruples $(y_1, y_2, h,h')$ satisfy $y_1, y_1+h-h', y_2, y_2+h-h' \in \mathcal{H}$ and
\begin{multline*}
\phi(y_1) - \phi(y_1+h-h') - \phi(y_2 ) + \phi(y_2+h-h') 
 = \\
  \lambda(y_1-h',h)B (y_1+h-h') - \lambda(y_2-h', h) B(y_2+h-h') - \mu(y_1-h',y_2-h', h)Bh\\ 
 - \lambda(y_1-h',h')B y_1 +\lambda(y_2-h', h') By_2 + \mu(y_1-h',y_2-h', h')Bh'.
\end{multline*}
Re-parametrising variables once more, at least $\delta^2 |H|^4$ quadruples $(y_1, y_2, h,h')$ satisfy $y_1, y_2+h, y_2, y_2+h \in \mathcal{H}$ and
\begin{multline}\label{eq:doubled-h-0}
\phi(y_1) - \phi(y_1+h) - \phi(y_2 ) + \phi(y_2+h) 
 = \\
 -\lambda(y_1-h',h')B y_1+\lambda(y_1-h',h+h')B (y_1+h)  + \lambda(y_2-h', h') By_2- \lambda(y_2-h', h+h') B(y_2+h)\\ - \mu(y_1-h',y_2-h', h+h')Bh
  + \sqbrac{\mu(y_1-h',y_2-h', h')-\mu(y_1-h',y_2-h', h+h')}Bh'.
\end{multline}

Proceeding as in the proof of Lemma \ref{lem:dep-2to3}, we again apply  Cauchy-Schwarz to double the $h'$ variable, and conclude that for at least $\delta^4 |H|^5$ quintuples $(y_1,y_2, h, h', h'')$ we have $y_1, y_2+h, y_2, y_2+h \in \mathcal{H}$ along with the identity \eqref{eq:doubled-h-0}  and the linear dependence
\begin{equation}\label{eq:h'-lin-dep}
\sqbrac{\mu(y_1-h',y_2-h', h')-\mu(y_1-h',y_2-h', h+h')}Bh'\\ \in \ang{By_1, By_2, Bh, Bh''}.
\end{equation}

By Lemma \ref{lem:size-of-diff-sub}, for all but $O\brac{|H|^5p^{5d-R}}$ choices of $(y_1,  y_2, h,h',h'')$, the $5d$ linear forms which constitute the coordinates of $By_1$, $By_2$, $Bh$, $Bh'$, $Bh''$ are linearly independent. For such choices, the linear dependence \eqref{eq:h'-lin-dep} can only hold if $\mu(y_1-h',y_2-h', h')=\mu(y_1-h',y_2-h', h+h')$. Hence either $R \ll d + \log(2/\delta)$ or there exists $h'$ such that for at least $\trecip{2}\delta^4 |H|^3$ triples $( y_1, y_2, h)$ we have $y_1,y_2, y_1+h, y_2+h \in \mathcal{H}$ and
\begin{multline}\label{eq:doubled-h-1}
\phi(y_1) - \phi(y_1+h) - \phi(y_2 ) + \phi(y_2+h) 
 = \\
 -\lambda(y_1-h',h')B y_1+\lambda(y_1-h',h+h')B (y_1+h)  + \lambda(y_2-h', h') By_2- \lambda(y_2-h', h+h') B(y_2+h)\\ - \mu(y_1-h',y_2-h', h')Bh.
\end{multline}
Recalling that $\mu(y_1,y_2, h) := \mu(y_1,h)-\mu(y_2,h)$, we can use the identity 
$$
-\sqbrac{ \mu(y_1)-\mu(y_2)}Bh = \mu(y_1)By_1 - \mu(y_1)B(y_1+h)-\mu(y_2)By_2+\mu(y_2)B(y_2+h)
$$
to re-write \eqref{eq:doubled-h-1} in the form
\begin{multline*}
\phi(y_1) - \phi(y_1+h) - \phi(y_2 ) + \phi(y_2+h) 
 = \\
 -\mu(y_1)B y_1+\lambda'(y_1,h)B (y_1+h)  + \mu(y_2) By_2- \lambda'(y_2, h) B(y_2+h)
\end{multline*}
where $\mu(y) := \lambda(y-h',h') -\mu(y-h',h')$ and $\lambda'(y,h) = \lambda(y-h', h+h')-\mu(y-h', h')$. Hence on defining $\phi'(y) := \phi(y) + \mu(y)By$, we see that for at least $\trecip{2}\delta^4 |H|^3$ triples $( y_1, y_2, h)$ we have $y_1,y_2, y_1+h, y_2+h \in \mathcal{H}$ and
\begin{equation*}
\phi'(y_1) - \phi'(y_1+h) - \phi'(y_2 ) + \phi'(y_2+h) 
 = \\
 \lambda''(y_1,h)B (y_1+h) - \lambda''(y_2, h) B(y_2+h),
\end{equation*}
where $\lambda''(y,h) = \lambda'(y,h)-\mu(y+h)$.
\end{proof}

\begin{lemma}[Removing dependence on all three variables]\label{lem:dep-0to3}
Let $H$ be a finite vector space over $\F_p$ and let $B = (b_1, \dots, b_d)$ be symmetric bilinear forms on $H$ whose rank (as a $d$-tuple) is at least $R$. Suppose that there exists a set $\mathcal{H} \subset H$, function $\phi : \mathcal{H} \to H^*$ and coefficients $\lambda(y,h)\in \F_p^d$ such that there are at least $\delta |H|^3$ triples $(y_1,y_2, h)$ satisfying  $y_1, y_2, y_1+h, y_2+h \in \mathcal{H}$ and
\begin{equation*}
\phi(y_1) - \phi(y_1+h) - \phi(y_2) + \phi(y_2+h) =\\ \lambda(y_1,h)B (y_1+h) - \lambda(y_2, h) B(y_2+h).
\end{equation*}
Then either $R\ll d + \log(2/\delta)$ or there are at least $\trecip{4}\delta^8 |H|^3$ triples $(y_1,y_2, h)$ satisfying  $y_1, y_2, y_1+h, y_2+h \in \mathcal{H}$ and
\begin{equation*}
\phi(y_1) - \phi(y_1+h) - \phi(y_2) + \phi(y_2+h) =0.
\end{equation*}
\end{lemma}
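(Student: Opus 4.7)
The plan is to carry out a third application of the Cauchy--Schwarz-plus-linear-independence scheme used in Lemmas \ref{lem:dep-2to3} and \ref{lem:dep-1to3}, but adapted to the structure of the right-hand side $\lambda(y_1,h)B(y_1+h) - \lambda(y_2,h)B(y_2+h)$. In particular, since we are not permitted to modify $\phi$ as the previous two lemmas do, the structural output must be strong enough to directly make the right-hand side vanish for many triples.

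First I would apply Cauchy--Schwarz to double $h$, obtaining for $\gtrsim \delta^2|H|^4$ quadruples $(y_1,y_2,h,h')$ a second-difference identity involving four $\lambda$-terms. Reparametrising via $y_i \mapsto y_i + h'$ and $h \mapsto h - h'$ (in the spirit of Lemma \ref{lem:dep-1to3}) recasts this into the same shape as the hypothesis $(\star)$. Comparing with the original $(\star)$ at $(y_1,y_2,h)$ and expanding $B(y+h)=By+Bh$ yields a three-term linear relation
\[
\alpha_1 \cdot By_1 - \alpha_2 \cdot By_2 + \alpha_3 \cdot Bh = 0,
\]
where the coefficients $\alpha_i \in \F_p^d$ are specific combinations of $\lambda$ evaluated at $(y_i, h)$, $(y_i - h', h')$ and $(y_i - h', h+h')$. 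By Lemma \ref{lem:size-of-diff-sub}, for all but an $O(p^{3d-R})$-fraction of triples $(y_1, y_2, h)$ the coordinates of $By_1, By_2, Bh$ span $3d$ linearly independent linear forms, so each $\alpha_i$ must vanish. The resulting functional equations on $\lambda$ take the form $\lambda(y, h) + \lambda(y - h', h + h') = \lambda(y - h', h')$ for many $(y, h, h')$; examining these at popular substitutions (morally corresponding to $h' = 0$ or $h = 0$) should pin down $\lambda(y, h) = \beta(y) - \beta(y+h)$ for some auxiliary $\beta : H \to \F_p^d$, and then a second round of the same doubling-plus-independence argument applied to $\beta$ should force $\lambda$ itself to vanish on a large set of pairs $(y, h)$.

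The principal obstacle is the final conversion of this structural information into the claimed $\tfrac{1}{4}\delta^8|H|^3$ count of triples with left-hand side zero. Since we cannot absorb terms into $\phi$ as in Lemma \ref{lem:dep-1to3}, we must exhibit many triples $(y_1, y_2, h)$ where both $\lambda(y_i, h) B(y_i + h) = 0$ simultaneously, so that the right-hand side of $(\star)$ genuinely vanishes and hence so does the left-hand side. I would accomplish this via a popular-value pigeonhole on the map $y \mapsto \lambda(y, h) B(y+h)$ for each $h$ in a dense good set, combined with a third Cauchy--Schwarz that delivers the $\delta^8$ exponent; the rank hypothesis $R \gg d + \log(2/\delta)$ ensures the bad-tuple exceptions from Lemma \ref{lem:size-of-diff-sub} are absorbed at every stage.
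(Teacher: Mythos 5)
Your opening moves are broadly in the spirit of the paper's: double $h$, reparametrise, and use the linear-independence input from Lemma~\ref{lem:size-of-diff-sub} to kill one of the $\lambda$-coefficients. But the endgame you sketch goes wrong, and the error is instructive because it reveals a misunderstanding of what needs to be proved.

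You want to show that the right-hand side $\lambda(y_1,h)B(y_1+h)-\lambda(y_2,h)B(y_2+h)$ actually \emph{vanishes} for many triples, by extracting a cocycle identity for $\lambda$, forcing $\lambda(y,h)=\beta(y)-\beta(y+h)$, and then somehow forcing $\lambda$ itself to vanish. This is both harder than necessary and, as written, unworkable: the ``popular-value pigeonhole on $y\mapsto\lambda(y,h)B(y+h)$'' cannot give anything useful because the codomain $H^*$ has the same size as $H$, so there is no reason for any single value to be popular; and nothing in the hypothesis forces $\lambda$ to be zero (indeed a nonzero $\lambda$ of the shape $\beta(y)-\beta(y+h)$ is perfectly consistent with the hypothesis). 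The conclusion of the lemma does not assert that the right-hand side vanishes --- it only asserts that the \emph{left}-hand side vanishes for a dense set of triples, and these are not the same thing.

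The paper's route sidesteps the issue entirely. After one round of doubling the auxiliary shift $h'$ and invoking Lemma~\ref{lem:size-of-diff-sub}, the coefficient of the $y_2$-dependent term $B(y_2+h+h')$ is forced to vanish (not $\lambda$ itself, just that one scalar combination). Pigeonholing on $h'$, one is left with
\[
\phi(y_1)-\phi(y_1+h)-\phi(y_2)+\phi(y_2+h) \;=\; \theta(y_1,h)
\]
for $\gg\delta^4|H|^3$ triples, where $\theta(y_1,h):=\lambda(y_1-h',h+h')B(y_1+h)-\lambda(y_1-h',h')By_1$ depends only on $(y_1,h)$. Now one final Cauchy--Schwarz doubling $y_2$ produces triples $(y_2',y_2,h)$ for which
\[
\phi(y_2')-\phi(y_2'+h)-\phi(y_2)+\phi(y_2+h)=\theta(y_1,h)-\theta(y_1,h)=0.
\]
The two copies of $\theta(y_1,h)$ cancel automatically because they involve the same $(y_1,h)$; no information about $\lambda$ is needed, and no vanishing of the right-hand side is required. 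This gives the $\tfrac14\delta^8|H|^3$ bound directly. You should replace your ``show $\lambda$ vanishes'' plan with this cancellation-via-Cauchy--Schwarz step: once the right-hand side is $y_2$-free, doubling $y_2$ is all that remains.
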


\begin{proof}
Re-parametrising, there are at least $\delta |H|^3$ triples $(y_1,y_2, h)$ satisfying  $y_1, y_2, y_1+h, y_2-h \in \mathcal{H}$ and
\begin{equation*}
\phi(y_1) - \phi(y_1+h) - \phi(y_2-h) + \phi(y_2) =\\ \lambda(y_1,h)B (y_1+h) - \lambda(y_2-h, h) By_2.
\end{equation*}

By the Cauchy-Schwarz inequality, at least $\delta^2 |H|^4$ quadruples $(y_1, y_2, h,h')$ satisfy $y_1+h, y_2-h,y_1+h', y_2-h' \in \mathcal{H}$ and
\begin{multline*}
\phi(y_1+h') - \phi(y_1+h) - \phi(y_2-h) + \phi(y_2-h') =\\ \lambda(y_1,h)B (y_1+h)- \lambda(y_1,h')B (y_1+h') - \sqbrac{\lambda(y_2-h, h)-\lambda(y_2-h', h')} By_2.
\end{multline*}

Re-parametrising variables, at least $\delta^2 |H|^4$ quadruples $(y_1, y_2, h,h')$ satisfy $y_1,y_1+h-h', y_2, y_2+h-h' \in \mathcal{H}$ and
\begin{multline*}
\phi(y_1) - \phi(y_1+h-h') - \phi(y_2) + \phi(y_2+h-h') =\\ \lambda(y_1-h',h)B (y_1+h-h')- \lambda(y_1-h',h')B y_1 - \sqbrac{\lambda(y_2, h)-\lambda(y_2+h-h', h')} B(y_2+h).
\end{multline*}

Re-parametrising once more, at least $\delta^2 |H|^4$ quadruples $(y_1, y_2, h,h')$ satisfy $y_1, y_1+h, y_2, y_2+h \in \mathcal{H}$ and
\begin{multline}\label{eq:h'-dbl-2}
\phi(y_1) - \phi(y_1+h) - \phi(y_2) + \phi(y_2+h) \\ 
= \lambda(y_1-h',h+h')B (y_1+h)- \lambda(y_1-h',h')B y_1 - \sqbrac{\lambda(y_2, h+h')-\lambda(y_2+h, h')} B(y_2+h+h').
\end{multline}

Doubling the $h'$ variable via Cauchy-Schwarz, at least $\delta^4 |H|^5$ quintuples $(y_1,y_2, h, h', h'')$ satisfy $y_1, y_2+h, y_2, y_2+h \in \mathcal{H}$ along with the identity \eqref{eq:h'-dbl-2}  and the linear dependence
\begin{equation*}
\sqbrac{\lambda(y_2, h+h')-\lambda(y_2+h, h')}  Bh'\\ \in \ang{By_1, By_2, Bh, Bh''}.
\end{equation*}
Hence either $R \ll d + \log(2/\delta)$ or there exists $h'$ such that for at least $\trecip{2}\delta^4 |H|^3$ triples $( y_1, y_2, h)$ we have $y_1,y_2, y_1+h, y_2+h \in \mathcal{H}$ and
\begin{equation*}
\phi(y_1) - \phi(y_1+h) - \phi(y_2) + \phi(y_2+h) \\ 
= \lambda(y_1-h',h+h')B (y_1+h)- \lambda(y_1-h',h')B y_1.
\end{equation*}

Applying Cauchy-Schwarz to double the $y_2$ variable, for at least $\trecip{4}\delta^8 |H|^3$ triples $( y_2', y_2, h)$ we have $y_2',y_2, y_2'+h, y_2+h \in \mathcal{H}$ and
\begin{equation*}
\phi(y_2') - \phi(y_2'+h) - \phi(y_2) + \phi(y_2+h) \\ 
= 0.\qedhere
\end{equation*}
\end{proof}

Combining Lemmas \ref{lem:dep-2to3}, \ref{lem:dep-1to3} and \ref{lem:dep-0to3} gives a proof of Lemma \ref{lem:partial-to-genuine}. Combining the existence of many large Fourier coefficients (Lemma \ref{lem:large-fourier}), the fact that many large Fourier coefficients implies many partial additive quadruples (Lemma \ref{lem:many-partial-quads}) and the fact that partial quadruples can be upgraded to genuine quadruples (Lemma \ref{lem:partial-to-genuine}), we obtain the following.

\begin{corollary}[Many additive quadruples]\label{cor:many-add-quad}
Let $H$ be a finite vector space over $\F_p$ with $p$ odd and let $Q = (q_1, \dots, q_d)$ be a $d$-tuple of quadratic polynomials on $H$ whose rank (as a $d$-tuple) is at least $R$. 
Let $f : H \to \C$ be a 1-bounded function with $\supp(f) \subset Q^{-1}(0)$ and suppose that $\norm{f}_{U^3} \geq \eta \norm{1_{Q^{-1}(0)}}_{U^3}$. Then either $R\ll d + \log(2/\eta)$ or there exists a set $\mathcal{H} \subset H$ with $|\mathcal{H}|\gg \eta^{O(1)}|H|$ and a function $\phi : \mathcal{H} \to H^*$ such that for each $h \in \mathcal{H}$ we have
\begin{equation*}
\bigabs{\widehat{\Delta_h f}(\phi(h))} \gg \eta^{O(1)} |H|p^{-2d},
\end{equation*}
and there are $\gg \eta^{O(1)}|H|^3$ quadruples $(h_1, h_2, h_3, h_4) \in \mathcal{H}^4$ satisfying $h_1-h_2=h_3-h_4$ and
$\phi(h_1) - \phi(h_2) =\phi(h_3)-\phi(h_4)$.
\end{corollary}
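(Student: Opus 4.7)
The plan is to chain together the three principal results of \S\ref{sec:many-partial} and \S\ref{sec:many-genuine}, together with one additional observation which ensures that the adjustment performed by Lemma \ref{lem:partial-to-genuine} does not destroy the large Fourier coefficient property.

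First, I would apply Lemma \ref{lem:large-fourier} to $f$. This yields, unless $R \ll d + \log(2/\eta)$, a set $\mathcal{H} \subset H$ with $|\mathcal{H}| \gg \eta^{O(1)}|H|$ and a function $\phi_0 : \mathcal{H} \to H^*$ such that
$
\bigabs{\widehat{\Delta_h f}(\phi_0(h))} \gg \eta^{O(1)}|H|p^{-2d}
$
for each $h \in \mathcal{H}$. Feeding $\phi_0$ into Lemma \ref{lem:many-partial-quads} (with parameter $\eta^{O(1)}$ in place of $\eta$) then yields $\gg \eta^{O(1)}|H|^3$ quadruples in $\mathcal{H}^4$ satisfying the \emph{partial} additive relation
$
\phi_0(h_1) - \phi_0(h_2) - \phi_0(h_3) + \phi_0(h_4) \in \ang{h_1^TB} - \ang{h_2^TB} - \ang{h_3^TB} + \ang{h_4^TB}.
$
Reparametrising via $y_1 = h_2$, $y_2 = h_4$, $h = h_1 - h_2 = h_3 - h_4$ and using $\ang{(y_i+h)^TB} \subset \ang{y_i^TB, h^TB}$, this puts us exactly in the hypothesis of Lemma \ref{lem:partial-to-genuine}.

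Second, I would apply Lemma \ref{lem:partial-to-genuine} to produce coefficients $\mu(y) \in \F_p^d$ such that the modified function
$
\phi(h) := \phi_0(h) + \mu(h)Bh
$
satisfies the genuine additive quadruple property: there are $\gg \eta^{O(1)}|H|^3$ quadruples $(h_1, h_2, h_3, h_4) \in \mathcal{H}^4$ with $h_1 - h_2 = h_3 - h_4$ and $\phi(h_1) - \phi(h_2) = \phi(h_3) - \phi(h_4)$. This takes care of the additive structure conclusion.

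The main remaining point — the only step which is not merely an invocation of an earlier result — is to check that the modified $\phi$ still enjoys the large Fourier coefficient bound $\bigabs{\widehat{\Delta_h f}(\phi(h))} \gg \eta^{O(1)}|H|p^{-2d}$. For this I would exploit the fact that on the support $\Phi_h \subset Q^{-1}(0) \cap [Q^{-1}(0)-h]$ of $\Delta_h f$, the constraint $Q(x) = Q(x+h) = 0$ combined with the expansion $Q(x+h) - Q(x) = 2B(h,x) + Q(h) + Lh$ (using symmetry of $B$ and the oddness of $p$) forces $B(h, x)$ to equal a constant $c_h \in \F_p^d$ depending only on $h$. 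Consequently, for every $x \in \Phi_h$,
\[
e_p\bigbrac{\mu(h)Bh \cdot x} = e_p\bigbrac{\mu(h) \cdot B(h,x)} = e_p\bigbrac{\mu(h) \cdot c_h},
\]
a unimodular constant. Therefore $\widehat{\Delta_h f}(\phi(h)) = e_p(\mu(h)\cdot c_h) \widehat{\Delta_h f}(\phi_0(h))$ and the magnitude is preserved. I anticipate no substantive obstacle beyond this bookkeeping: the hard work is already in the component lemmas, and the key conceptual point is simply that the modification used in \S\ref{sec:many-genuine} to produce genuine quadruples lives in precisely the space of frequencies that is ``invisible'' to the Fourier transform of $\Delta_h f$ on a quadratic level set.
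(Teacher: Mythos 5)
Your plan is correct and matches the paper's argument step for step: chain Lemmas \ref{lem:large-fourier}, \ref{lem:many-partial-quads} and \ref{lem:partial-to-genuine}, then observe that the correction $\mu(h)Bh$ has no effect on $|\widehat{\Delta_h f}(\,\cdot\,)|$ because the support of $\Delta_h f$ lies (after translating by a fixed point, as in Lemma \ref{lem:struct-diff}) inside $V_h = \ker B(h,\cdot)$, where $\phi_0(h)$ and $\phi_0(h)+\mu(h)Bh$ agree pointwise -- which is exactly what your ``$B(h,x)$ is constant on $\Phi_h$'' observation expresses. One minor algebra slip: the expansion should read $Q(x+h)-Q(x) = 2B(h,x) + Q(h) - Q(0)$ (equivalently $2B(h,x) + B(h,h) + Lh$), not $2B(h,x) + Q(h) + Lh$; this does not affect the conclusion that $B(h,x)$ is constant on $\Phi_h$.
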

\begin{proof}
Combining Lemmas \ref{lem:large-fourier}, \ref{lem:many-partial-quads} and \ref{lem:partial-to-genuine}, our hypotheses imply the existence of a set $\mathcal{H} \subset H$ with $|\mathcal{H}|\gg \eta^{O(1)}|H|$ and functions $\phi : \mathcal{H} \to H$ and $\mu : \mathcal{H}\to \F_p^d$ such that for each $h \in \mathcal{H}$ we have
\begin{equation*}
\bigabs{\widehat{\Delta_h f}(\phi(h))} \gg \eta^{O(1)} |H|p^{-2d},
\end{equation*}
and on setting $\phi'(h) = \phi(h) + \mu(h)By$ we have $\gg \eta^{O(1)}|H|^3$ quadruples $(h_1, h_2, h_3, h_4) \in \mathcal{H}^4$ satisfying $h_1-h_2=h_3-h_4$ and
$\phi'(h_1) - \phi'(h_2) =\phi'(h_3)-\phi'(h_4)$. 

We observe that the function $\Delta_h f$ is supported on $Q^{-1}(0)\cap \sqbrac{Q^{-1}(0) - h}$. By Lemma \ref{lem:struct-diff}, if we fix $x_h \in Q^{-1}(0)\cap \sqbrac{Q^{-1}(0) - h}$ then
$$
Q^{-1}(0)\cap \sqbrac{Q^{-1}(0) - h} - x_h = \sqbrac{Q^{-1}(0)-x_h}\cap V_h,
$$
where $V_h = \set{x\in H : B(h,x) = 0}$. Note that if $x \in V_h$ then $\phi(h)x = \phi'(h)x$, and so
\begin{multline*}
\bigabs{\widehat{\Delta_h f}(\phi(h))}  = \Bigabs{\sum_{x\in \sqbrac{Q^{-1}(0)-x_h}\cap V_h} \Delta_hf(x_h + x) e_p(-\phi(h)x)}\\ = \Bigabs{\sum_{x\in \sqbrac{Q^{-1}(0)-x_h}\cap V_h} \Delta_hf(x_h + x) e_p(-\phi'(h)x)}  = \bigabs{\widehat{\Delta_h f}(\phi'(h))}.\qedhere
\end{multline*}
\end{proof}
\section{The symmetry argument and proof of the relative inverse theorem}\label{sec:sym}
We begin this section by showing how the function $\phi$ appearing in the statement of Corollary \ref{cor:many-add-quad} can be upgraded to take the form of an affine linear map $\phi(h) = h^TM +\ell$. The argument originates with Gowers \cite{GowersNewFour} and was adapted to finite vector spaces by Green and Tao \cite{GreenTaoInverse}. However, the strong quantitative variant we employ is essentially equivalent to the polynomial Freiman-Ruzsa conjecture, whose recent proof \cite{gowers2024martons} (due to Gowers, Green, Manners and Tao) does the heavy lifting in our argument. 
\begin{lemma}[Large $U^3$ gives linear correlation in the derivative]\label{lem:lin-cor}
Let $H$ be a finite vector space over $\F_p$ with $p$ odd and let $Q = (q_1, \dots, q_d)$ be a $d$-tuple of quadratic polynomials on $H$ whose rank (as a $d$-tuple) is at least $R$. Let $f : H \to \C$ be a 1-bounded function with $\supp(f) \subset Q^{-1}(0)$ and suppose that $\norm{f}_{U^3} \geq \eta \norm{1_{Q^{-1}(0)}}_{U^3}$. Then either $R\ll d + \log(2/\eta)$ or there exists a linear map $h \mapsto h^TM$ from $H$ to $H^*$ and a constant $\ell \in H^*$ such that
\begin{equation}\label{eq:phi-linear}
\sum_h\bigabs{\widehat{\Delta_h f}(h^TM+\ell)} \gg_p \eta^{O_p(1)} |H|^2p^{-2d}.
\end{equation}
\end{lemma}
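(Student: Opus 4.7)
The plan is to combine Corollary~\ref{cor:many-add-quad} with the Balog--Szemer\'edi--Gowers theorem and the recently established polynomial Freiman--Ruzsa (PFR) conjecture \cite{gowers2024martons} to upgrade $\phi$ from an approximate Freiman homomorphism into the graph of an affine linear map. Assuming $R\gg d+\log(2/\eta)$, Corollary~\ref{cor:many-add-quad} furnishes a set $\mathcal{H}\subset H$ with $|\mathcal{H}|\gg \eta^{O(1)}|H|$ and a map $\phi:\mathcal{H}\to H^*$ such that $\bigabs{\widehat{\Delta_h f}(\phi(h))}\gg\eta^{O(1)}|H|p^{-2d}$ for every $h\in\mathcal{H}$, together with $\gg\eta^{O(1)}|H|^3$ additive quadruples in $\mathcal{H}$ on which $\phi$ behaves additively. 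These are precisely the ingredients needed to locate a small subspace containing the graph of $\phi$.

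The second step is to consider the graph $\Gamma:=\{(h,\phi(h)):h\in\mathcal{H}\}\subset H\oplus H^*$: the quadruple count above is exactly the additive energy of $\Gamma$, so $E^{+}(\Gamma)\gg \eta^{O(1)}|\Gamma|^3$. By Balog--Szemer\'edi--Gowers there exists $\Gamma'\subset\Gamma$ with $|\Gamma'|\gg \eta^{O(1)}|\Gamma|$ and $|\Gamma'-\Gamma'|\ll \eta^{-O(1)}|\Gamma'|$; PFR then covers $\Gamma'$ by a coset $x_0+V$ of a subspace $V\leq H\oplus H^*$ satisfying $|V|\ll \eta^{-O(1)}|\Gamma'|$.

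To extract a linear map from $V$, I would write $V$ as a graph. Set $U:=\pi_1(V)\leq H$ and $W_0:=\set{w\in H^*:(0,w)\in V}\leq H^*$; splitting the short exact sequence $0\to W_0\to V\to U\to 0$ and extending by zero on a linear complement of $U$ in $H$ produces a linear map $M:H\to H^*$ with $V=\set{(u,u^T M+w):u\in U,\ w\in W_0}$. Since $\pi_1$ is injective on $\Gamma'-x_0$, we have $|U|\geq |\Gamma'|$ and hence $|W_0|=|V|/|U|\ll \eta^{-O(1)}$. Writing $x_0=(a,b)$, every $h\in\mathcal{H}':=\pi_1(\Gamma')$ satisfies $\phi(h)=h^T M + (b-a^T M)+w_h$ for some $w_h\in W_0$. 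Pigeonholing over the $\ll \eta^{-O(1)}$ possible values of $w_h$ locates $w_0\in W_0$ and $\mathcal{H}''\subset\mathcal{H}'$ with $|\mathcal{H}''|\geq |\mathcal{H}'|/|W_0|\gg \eta^{O(1)}|H|$ on which $\phi(h)=h^T M+\ell$, where $\ell:=b-a^T M+w_0\in H^*$. Summing the lower bound $\bigabs{\widehat{\Delta_h f}(\phi(h))}\gg \eta^{O(1)}|H|p^{-2d}$ over $h\in\mathcal{H}''$ then delivers \eqref{eq:phi-linear}.

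The main obstacle, as the excerpt itself highlights, is the invocation of PFR: all the quantitative heavy lifting sits inside \cite{gowers2024martons}. Without it, one would substitute Ruzsa's covering lemma together with a Freiman-type theorem in $\F_p^n$, as in the original argument of \cite{GreenTaoInverse}; this would yield the same shape of conclusion but with exponentially worse dependence on $\eta$.
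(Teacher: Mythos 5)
Your proposal is correct and takes essentially the same route as the paper: after applying Corollary \ref{cor:many-add-quad}, the paper simply cites \cite[Appendix C]{gowers2024martons} for the Balog--Szemer\'edi--Gowers plus PFR step that turns the many additive quadruples of $\phi$ into an affine linear map agreeing with $\phi$ on a dense subset of $\mathcal{H}$, which is exactly the argument you spell out via the graph $\Gamma$, the coset cover, the splitting $V\cong U\oplus W_0$, and the final pigeonhole. The only cosmetic point is that the implied constants at the PFR stage depend on $p$, so the $\gg$'s in your write-up should carry the subscript $p$ to match the statement of \eqref{eq:phi-linear}.
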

\begin{proof}
Employing Corollary \ref{cor:many-add-quad}, we may assume that there exists a set $\mathcal{H} \subset H$ with $|\mathcal{H}|\gg \eta^{O(1)}|H|$ and a function $\phi : \mathcal{H} \to H^*$ such that for each $h \in \mathcal{H}$ we have
\begin{equation*}
\bigabs{\widehat{\Delta_h f}(\phi(h))} \gg \eta^{O(1)} |H|p^{-2d},
\end{equation*}
and there are $\gg \eta^{O(1)}|H|^3$ quadruples $(h_1, h_2, h_3, h_4) \in \mathcal{H}^4$ satisfying $h_1-h_2=h_3-h_4$ and
$\phi(h_1) - \phi(h_2) =\phi(h_3)-\phi(h_4)$.

We now follow \cite[Appendix C]{gowers2024martons} (which in turn follows \cite[\S5]{GreenTaoInverse}) up to the final displayed equation preceding \cite[(C.1)]{gowers2024martons}, all of which remains valid in our context. Hence we may assume the existence of a linear map which takes $h\in H$ to $ h^TM \in H^*$ and a constant $\ell \in H^*$ such that  
\begin{equation*}
\sum_{h \in \mathcal{H}} 1_{\phi(h) = h^TM + \ell} \gg_p \eta^{O_p(1)} |H|.
\end{equation*}
The result follows.
\end{proof}

To upgrade \eqref{eq:phi-linear}, which gives linear correlation in the derivative, to global quadratic correlation, we wish to upgrade the bilinear map $(h,x) \mapsto h^TMx$ to one which is symmetric. Such a procedure was pioneered in Green and Tao's ``symmetry argument'' \cite[\S6, Step 2]{GreenTaoInverse}. One can adapt their procedure to work relative to high-rank quadratic level sets, with help from the pseudorandom spectral estimate given in Lemma \ref{lem:spec-estimate}. Instead, we take the opportunity to give an alternative version of the symmetry argument, whose adaptation to the relative context is a little simpler. We first given the non-relative variant of our symmetry argument.

\begin{proposition}[Symmetry argument for densely supported functions]
Let $H$ be a finite vector space over $\F_p$ with $p$ odd. Let $f : H \to \C$ be a 1-bounded function and suppose that there exists a linear map $h \mapsto h^TM$ from $H$ to $H^*$ and a constant $\ell\in H^*$ such that
\begin{equation}\label{eq:phi-linear-2}
\sum_h\bigabs{\widehat{\Delta_h f}(h^TM+\ell)} \geq \eta |H|^2.
\end{equation}
Then, on defining the bilinear map $M^T$ by $(x,y)\mapsto y^T M x$, we have
\begin{equation*}
\sum_h\bigabs{\widehat{\Delta_h f}(\trecip{2}h^T[M+M^T])} \geq \eta^4|H|^2.
\end{equation*}
\end{proposition}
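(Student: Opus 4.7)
Decompose $M = S + A$ into its symmetric and antisymmetric parts, $S = \tfrac{1}{2}(M+M^T)$ and $A = \tfrac{1}{2}(M - M^T)$. As linear forms on $H$ we have the identity $h^T M = Sh - Ah$, so the hypothesis concerns the Fourier coefficient of $\Delta_h f$ at the ``shifted'' frequency $Sh - Ah + \ell$, whereas the conclusion concerns the symmetric frequency $Sh = \tfrac{1}{2}h^T(M+M^T)$. Introduce the modulated function $G_h(x) := \Delta_h f(x)\, e_p(S(h,x))$; using the symmetry of $S$ together with the commutativity $\Delta_{h_1}\Delta_{h_2} f = \Delta_{h_2}\Delta_{h_1} f$, one checks that $G$ satisfies the cocycle identity $\Delta_{h_1} G_{h_2} = \Delta_{h_2} G_{h_1}$. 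Under this modulation, $\widehat{\Delta_h f}(h^T M + \ell) = \widehat{G_h}(-Ah + \ell)$ and the target is $\widehat{G_h}(0)$.

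\textbf{Step 1: Reduce to the case $\ell = 0$ via Cauchy--Schwarz.} Squaring the hypothesis via Cauchy--Schwarz gives $\sum_h |\widehat{\Delta_h f}(h^T M + \ell)|^2 \geq \eta^2 |H|^3$. Expanding the squared modulus as $\sum_{x, x'}\Delta_h f(x)\overline{\Delta_h f(x')}e_p(\phi(h)(x-x'))$, substituting $x' = x - k$, and invoking commutativity of the $\Delta$-operators (so that $\Delta_h f(x)\overline{\Delta_h f(x-k)} = \Delta_{-k}\Delta_h f(x) = \Delta_h \Delta_{-k} f(x)$), one can carry out the sum over $h$ as a Fourier transform of $\Delta_{-k} f$. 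A short computation yields the identity
\[
\sum_h |\widehat{\Delta_h f}(h^T M + \ell)|^2 = \sum_k e_p(-\ell k)\,|\widehat{\Delta_k f}(Mk)|^2.
\]
The left-hand side being real and positive, the triangle inequality removes the $\ell$-phase, giving $\sum_k |\widehat{\Delta_k f}(Mk)|^2 \geq \eta^2 |H|^3$, or equivalently $\sum_k |\widehat{G_k}(Ak)|^2 \geq \eta^2 |H|^3$.

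\textbf{Step 2: Symmetrize the bilinear form.} Apply Cauchy--Schwarz once more to bootstrap the $L^2$ bound:
\[
\sum_k |\widehat{G_k}(Ak)|^4 \geq \eta^4 |H|^5.
\]
Expanding the fourth power as a quadruple sum over $(x_1,x_2,x_3,x_4)$, parametrising $x_2 = x_1 - k_1$ and $x_4 = x_3 - k_2$, and applying the cocycle identity $\Delta_{-k_i} G_h = \Delta_h G_{-k_i}$ to both factors, one converts the integrand into a product of second differences $G_{-k_1}(x_1)\overline{G_{-k_1}(x_1+h)}\cdot G_{-k_2}(x_3)\overline{G_{-k_2}(x_3+h)}$ times the phase $e_p(A(k_1+k_2, h))$. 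Summing over $h$ and reparametrising, the antisymmetric phase can be evaluated on the ``diagonal'' $h = \text{constant}$, where the key identity $A(\xi, \xi) = 0$ (valid since $p$ is odd) kills the cross-term. After reassembly, the $A$-dependence is eliminated and one is left with an expression bounded below by $c \sum_k |\widehat{G_k}(0)|^2 \cdot |H|^2$ (up to an $O(|H|^2)$ error one controls by Parseval applied to the auxiliary correlation function), yielding $\sum_k |\widehat{G_k}(0)|^2 \geq \eta^4 |H|^3$.

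\textbf{Step 3: Pass from $L^2$ to $L^1$.} Since $|\widehat{G_k}(0)| = |\sum_x G_k(x)| \leq |H|$, we have $\sum_k |\widehat{G_k}(0)| \geq |H|^{-1} \sum_k |\widehat{G_k}(0)|^2 \geq \eta^4 |H|^2$. Recalling that $\widehat{G_k}(0) = \widehat{\Delta_k f}(Sk) = \widehat{\Delta_k f}\bigl(\tfrac{1}{2}k^T(M+M^T)\bigr)$, this is exactly the claimed conclusion.

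\textbf{Main obstacle.} The crux is Step 2, namely the cancellation of the antisymmetric phase $e_p(A(\cdot, h))$. The natural expansion of $|\widehat{G_k}(Ak)|^4$ produces a phase bilinear in $(k_1+k_2, h)$; the cocycle must be applied twice (on each squared factor) before one can execute the sum over $h$ and exploit $A(h,h) = 0$ to collapse the antisymmetric contribution. Getting the combinatorics of the reparametrisation right, so that the antisymmetric phase pairs up with its negative via the cocycle identity, is the central technical step; everything else is routine Cauchy--Schwarz.
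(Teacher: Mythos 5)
Your Step 1 is a nice observation: the exact Fourier identity
\[
\sum_h \bigl|\widehat{\Delta_h f}(h^TM+\ell)\bigr|^2 \;=\; \sum_k e_p(-\ell k)\,\bigl|\widehat{\Delta_k f}(k^TM^T)\bigr|^2
\]
(which follows from the commutativity $\Delta_{-k}\Delta_h f = \Delta_h\Delta_{-k}f$ and a variable change) disposes of the additive constant $\ell$ cleanly, since the left-hand side is nonnegative real. This is genuinely different from the paper, which keeps $\ell$ as a phase and only discards it at the end, and it is a tidier way to handle that part. Your cocycle identity $\Delta_{h_1}G_{h_2}=\Delta_{h_2}G_{h_1}$ (valid precisely because $S$ is symmetric) is also correct.

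The problem is Step~2, which you yourself flag as the ``main obstacle''. Once Step 1 is done you have $\sum_k|\widehat{G_k}(-Ak)|^2\geq\eta^2|H|^3$, and the task is to remove the $k$-dependent frequency shift $-Ak$. Your plan is to Cauchy--Schwarz to a fourth power, expand, use the cocycle to convert the integrand to $G_{-k_1}(x_1)\overline{G_{-k_1}(x_1+h)}\,G_{-k_2}(x_3)\overline{G_{-k_2}(x_3+h)}\,e_p(A(k_1+k_2,h))$, and then claim that ``summing over $h$'' and invoking $A(\xi,\xi)=0$ collapses the antisymmetric phase. But there is no diagonal here: the phase is $A(k_1+k_2,h)$ with $h$ and $k_1+k_2$ \emph{independent} summation variables, so $A(\xi,\xi)=0$ never enters. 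Moreover, the variable $h$ appears both in the $G$-factors and in the phase, so the $h$-sum is a genuine Fourier coefficient of the product function $h\mapsto G_{-k_1}(x_1+h)G_{-k_2}(x_3+h)$ at the frequency $(k_1+k_2)^TA$; it does not produce the indicator-function restriction you would need. When I tried to push the manipulations you describe, the $A$-dependence never cancelled — it simply migrated to a different Fourier coefficient.

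The paper's proof resolves exactly this point, and the mechanism is different from what you propose. After the first Cauchy--Schwarz the paper re-parametrises so that the $h$-differencing acts symmetrically on $x_1$ and $x_2$, and then performs a \emph{second} Cauchy--Schwarz that doubles the differencing variable $h\mapsto(h,h')$. After a further re-parametrisation $(x_1,x_2)\to(x_1-h',x_2+h)$ and $h\to h+h'$, the $f$-weights become independent of $h'$, so the sum over $h'$ is a pure exponential sum $\sum_{h'}e_p\bigl(h^T(M-M^T)h'\bigr) = |H|\cdot 1_{h^T(M-M^T)=0}$. This is what carries out the symmetrization: on the resulting kernel, $h^TM=\tfrac12 h^T(M+M^T)$ automatically. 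Your Cauchy--Schwarz instead raises the Fourier coefficient to a fourth power, which keeps the differencing variable coupled to the phase and never produces a pure exponential sum. So Step 2, as sketched, does not go through; that is where the argument fails, and the paper's doubling-of-$h$ trick is the missing ingredient.
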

\begin{proof}
We apply Cauchy-Schwarz to \eqref{eq:phi-linear-2} to give
\begin{equation*}
\eta^2 |H|^3 \leq \sum_{x_1, x_2, h} f(x_1)\overline{f(x_1+h)f(x_2)}f(x_2+h) e_p\sqbrac{h^TM(x_1-x_2) + \ell(x_1-x_2)}.
\end{equation*}
Re-parametrising, this is equivalent to 
\begin{equation*}
\eta^2 |H|^3 \leq \sum_{x_1, x_2, h} f(x_1)\overline{f(x_1+h)f(x_2-h)}f(x_2) e_p\sqbrac{h^TM(x_1-x_2+h) + \ell(x_1-x_2+h)}.
\end{equation*}
Applying Cauchy-Schwarz to double the $h$ variable, we obtain
\begin{multline*}
\eta^4 |H|^4 \leq \sum_{x_1, x_2, h, h'} \overline{f(x_1+h)f(x_2-h)}f(x_1+h')f(x_2-h')\\\times e_p\sqbrac{h^TM(x_1-x_2+h) - h'^TM(x_1-x_2+h') + \ell(h-h')}.
\end{multline*}
Re-parametrising, first by changing variables $(x_1,x_2) \rightarrow (x_1-h',x_2+h)$, then by a further change of variables $h \rightarrow h+h'$, we obtain 
\begin{multline*}
\eta^4 |H|^4 \leq  \sum_{x_1, x_2, h}f(x_1) \overline{f(x_1+h)f(x_2)}f(x_2+h)e_p\sqbrac{h^TM(x_1-x_2)+\ell h}\\\times \sum_{h'}e_p\sqbrac{h^T(M-M^T)h' }\\
= |H|\sum_{\substack{h\\ h^T(M-M^T) = 0}}e_p(\ell h) \sum_{x_1, x_2}f(x_1) \overline{f(x_1+h)f(x_2)}f(x_2+h)e_p\sqbrac{h^TM(x_1-x_2)}\\
\leq |H| \sum_{\substack{h\\ h^T(M-M^T) = 0}}\bigabs{\widehat{\Delta_h f}(h^TM)}^2
\leq |H|^2\sum_{h} \bigabs{\widehat{\Delta_h f}\sqbrac{h^T(M+M^T)/2}}.\qedhere
\end{multline*}
\end{proof}

\begin{lemma}[Symmetry argument relative to quadratic level sets]\label{lem:sym-arg}
Let $H$ be a finite vector space over $\F_p$ with $p$ odd and let $Q = (q_1, \dots, q_d)$ be a $d$-tuple of quadratic polynomials on $H$ whose rank (as a $d$-tuple) is at least $R$. Let $0< \eta \leq 1$ and let $f : H \to \C$ be a 1-bounded function with $\supp(f) \subset Q^{-1}(0)$. Suppose that there exists a linear map $h \mapsto h^TM$ from $H$ to $H^*$ and a constant $\ell\in H^*$ such that
\begin{equation}\label{eq:phi-linear-3}
\sum_h\bigabs{\widehat{\Delta_h f}(h^TM+\ell)} \geq \eta |H|^2p^{-2d}.
\end{equation}
Then either $R \ll d + \log(2/\eta)$ or 
\begin{equation*}\label{eq:phi-linear-4}
\sum_h\bigabs{\widehat{\Delta_h f}(\trecip{2}h^T[M+M^T])} \gg \eta^{O(1)}|H|^2p^{-2d}.
\end{equation*}
\end{lemma}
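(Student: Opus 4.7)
The plan is to closely follow the non-relative symmetry argument presented above, handling the support constraints on $f$ via the pseudorandomness results of Section~\ref{sec:high-rank}. Starting from the hypothesis $\sum_h \bigabs{\widehat{\Delta_h f}(h^TM+\ell)} \geq \eta|H|^2p^{-2d}$, I first apply Cauchy-Schwarz to square the estimate (losing a factor of $|H|$), then reparametrise $x_2 \to x_2 - h$ to put the phase into the form $e_p[(h^TM+\ell)(x_1-x_2+h)]$, exactly as in the non-relative proof.

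I then apply Cauchy-Schwarz to double the $h$ variable, with outer variables $(x_1,x_2)$ and outer weight $f(x_1)\overline{f(x_2)}$. The essential adaptation from the non-relative proof is to use the bound $\bigbrac{\sum_x |f(x)|^2}^2 \leq |Q^{-1}(0)|^2 \asymp |H|^2p^{-2d}$ furnished by Corollary~\ref{cor:quad-level-size}, in place of the dense bound $|H|^2$. Having done so, I perform the changes of variable $(x_1,x_2)\to(x_1-h',x_2+h)$ followed by $h\to h+h'$, so that the $f$-factors no longer depend on $h'$; the only $h'$-dependence is then through the phase $e_p[-h^T(M-M^T)h']$. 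Summing over $h'$ produces $|H|\cdot 1_{h^T(M-M^T)=0}$, and on this constraint set $h^TM=\trecip{2}h^T(M+M^T)$. The outcome is a squared estimate of the form
$$
\sum_h \bigabs{\widehat{\Delta_h f}(\trecip{2}h^T[M+M^T])}^2 \gg \eta^{O(1)}|H|^3p^{-O(d)}.
$$

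To convert this squared estimate into the first-power estimate of the conclusion, I invoke Corollary~\ref{cor:pseud-once-diff}: for all but at most $|H|p^{d-R}$ values of $h$, the set $\Phi_h = Q^{-1}(0)\cap[Q^{-1}(0)-h]$ satisfies $|\Phi_h|\ll|H|p^{-2d}$, and hence $\bignorm{\widehat{\Delta_h f}}_\infty\leq|\Phi_h|\ll|H|p^{-2d}$ for such ``good'' $h$. For these $h$ the inequality $\bigabs{\widehat{\Delta_h f}}^2\leq(|H|p^{-2d})\bigabs{\widehat{\Delta_h f}}$ allows one to downgrade the squared sum to the first power. Contributions from the ``bad'' $h$ are controlled by the trivial estimate $|\Phi_h|\leq|Q^{-1}(0)|\asymp|H|p^{-d}$, and are negligible provided $R\gg d+\log(2/\eta)$; this is the source of the rank dichotomy in the statement.

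The main obstacle I anticipate is the careful bookkeeping of $p$-powers through the two applications of Cauchy-Schwarz and the subsequent extraction of the first power from the squared sum, ensuring that the final $p^{-2d}$ on the right-hand side of the conclusion matches the natural trivial upper bound $\sum_h\bigabs{\widehat{\Delta_h f}(\alpha_h)}\leq\sum_h|\Phi_h|=|Q^{-1}(0)|^2\asymp|H|^2p^{-2d}$. The tightness of each Cauchy-Schwarz step will rely on the pseudorandomness of $Q^{-1}(0)$ rather than any dense bound.
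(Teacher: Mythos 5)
Your reduction from \eqref{eq:phi-linear-3} to the squared estimate looks superficially like the non-relative symmetry argument, but it loses a factor of $p^{-2d}$ and so cannot yield the claimed conclusion. The problem lies in the step where you assert that, after the changes of variable $(x_1,x_2)\to(x_1-h',x_2+h)$, $h\to h+h'$, ``the $f$-factors no longer depend on $h'$; the only $h'$-dependence is then through the phase $e_p[-h^T(M-M^T)h']$. Summing over $h'$ produces $|H|\cdot 1_{h^T(M-M^T)=0}$.'' This is correct only because your Cauchy--Schwarz application discarded the indicators $1_{Q^{-1}(0)}(x_1)1_{Q^{-1}(0)}(x_2)$ before expanding the square. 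But those indicators are precisely what, after reparametrisation, constrain $h'$ to lie in
$$
\bigset{h' : x_1-h'\in Q^{-1}(0),\ x_2+h+h'\in Q^{-1}(0)},
$$
a set of density $\approx p^{-2d}$ in $H$ (when the rank of $Q$ is high). Retaining the indicators through Cauchy--Schwarz, the $h'$-sum becomes a character sum over a pseudorandom subset and evaluates to $\asymp |H|p^{-2d}\cdot 1_{h^T(M^T-M)\in V_{x,h,k}^\perp}$ plus a small error, not $|H|\cdot 1_{h^T(M^T-M)=0}$.

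Concretely, tracking the powers of $p$ through your version gives
$$
\sum_h \bigabs{\widehat{\Delta_h f}(\tfrac12 h^T[M+M^T])}^2 \gg \eta^4 |H|^3 p^{-6d},
$$
and then, even with the $L^2\to L^1$ conversion you describe (which is itself fine), this yields $\sum_h\bigabs{\widehat{\Delta_h f}(\tfrac12 h^T[M+M^T])}\gg \eta^4 |H|^2 p^{-4d}$, a factor of $p^{-2d}$ short of the target $\eta^{O(1)}|H|^2 p^{-2d}$. Since $d$ is independent of $\eta$, this gap cannot be absorbed into the $\eta^{O(1)}$, and the conclusion would be useless in the downstream integration argument (Lemma \ref{lem:int-arg}), where the density normalisation $|H|^2p^{-2d}$ must be recovered exactly. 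The paper avoids this by keeping the indicator functions inside the Cauchy--Schwarz, then estimating the constrained $h'$-sum using Lemma \ref{lem:pseud-quad-diff} and the Weyl bound. This produces an indicator of the form $1_{h^T(M^T-M)\in V_{x,h,k}^\perp}$ depending on all of $(x,h,k)$, and a further Cauchy--Schwarz doubling of $k$ together with the linear-independence count of Lemma \ref{lem:size-of-diff-sub} is required to decouple this and land back on the constraint $h^T(M^T-M)=0$. Those extra steps cost an additional $\eta^4$ in the exponent but save the essential $p^{2d}$, which is the whole point of working relative to the level set. So the non-relative symmetry argument does not port over verbatim: the second Cauchy--Schwarz and the pseudorandomness analysis of the constrained $h'$-sum are both indispensable.
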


\begin{proof}
We apply Cauchy-Schwarz to \eqref{eq:phi-linear-3} to give
\begin{equation*}
\eta^2 |H|^3p^{-4d} \leq \sum_{x_1, x_2, h} f(x_1)\overline{f(x_1+h)f(x_2)}f(x_2+h) e_p\sqbrac{h^TM^T(x_1-x_2) + \ell(x_1-x_2)}.
\end{equation*}
Re-parametrising, this is equivalent to 
\begin{multline*}
\eta^2 |H|^3p^{-4d} \leq\\ \sum_{x_1, x_2, h} f(x_1)\overline{f(x_1+h)f(x_2-h)}f(x_2) e_p\sqbrac{h^TM^T(x_1-x_2+h) +\ell (x_1-x_2+h)}.
\end{multline*}

Since the support of $f$ lies in $Q^{-1}(0)$, we can use Corollary \ref{cor:quad-level-size} to give the estimate
$
\sum_x |f(x)|^2 \ll |H|p^{-d},
$
or else $R \ll d$. Thus applying Cauchy-Schwarz to double the $h$ variable, we obtain
\begin{multline*}
\eta^4 |H|^4p^{-6d} \ll \sum_{x_1,x_2,h,h'} 1_{Q^{-1}(0)}(x_1) 1_{Q^{-1}(0)}(x_2)\overline{f(x_1+h)f(x_2-h)}f(x_1+h')f(x_2-h')\\\times e_p\sqbrac{h^TM(x_1-x_2+h) - h'^TM(x_1-x_2+h') + \ell(h-h')}.
\end{multline*}
Re-parametrising, first by changing variables $(x_1,x_2) \rightarrow (x_1-h',x_2+h)$, then by a further change of variables $h \rightarrow h+h'$, this is equivalent to
\begin{multline}\label{eq:orthog-inner-sum}
\eta^4 |H|^4 p^{-6d}\ll  \sum_{x_1, x_2, h}f(x_1) \overline{f(x_1+h)f(x_2)}f(x_2+h)e_p\sqbrac{h^TM(x_1-x_2)+ \ell h}\\\times \sum_{h'}1_{Q^{-1}(0)}(x_1-h') 1_{Q^{-1}(0)}(x_2+h+h')e_p\sqbrac{h^T(M^T-M)h' }\\
=\sum_{x,h,k}f(x) \overline{f(x+h)f(x+k)}f(x+h+k)e_p\brac{-h^TMk+\ell h}\\\times \sum_{y}1_{Q^{-1}(0)}(x-y) 1_{Q^{-1}(0)}(x+h+k+y)e_p\sqbrac{h^T(M^T-M)y }
\end{multline}

We would like to determine the set of $y$ contributing to the innermost sum; our aim is to show that this is a pseudorandom set, so the inner sum can only be large if $h^T(M^T - M) = 0$.  We first need to determine the set of $(x,h,k)$ contributing to the outermost sum. Writing $B$ for the $d$-tuple of symmetric bilinear forms associated to $Q$, one can check that
\begin{multline}\label{eq:quad-in-quad}
Q(x) = Q(x+h) = Q(x+k)  = Q(x+h+k) = 0;\\
 \iff\\
Q(x) =0,\quad
Q(h) + 2B(x,h) = Q(0),\quad
Q(k) + 2B(x,k)  =  Q(0), \quad
 B(h,k) = 0.
\end{multline}
Suppose that $(x,h,k)$ satisfy \eqref{eq:quad-in-quad}, so that this tuple contributes to the outer sum on the right-hand side of \eqref{eq:orthog-inner-sum}. Then, writing $L$ for the homogeneous linear part of $Q$, we have the following equivalence for the set of $y$ contributing to the inner sum on the right-hand side of \eqref{eq:orthog-inner-sum}:
\begin{multline*}
Q(x-y)  = Q(x+h+k+y) = 0
 \iff\\
 Q(y) = 2B(x,y)+2Ly +Q(0) \text{\quad\&\quad}
 B(2x+h+k, y) + Ly =0.
\end{multline*}

By orthogonality and the Weyl bound (Lemma \ref{lem:weyl-bound}), for any  subspace $V \leq H$  of codimension at most $d$, any linear map $\tilde L : V \to \F_p^d$,  any  $ \tilde \ell \in H^*$ and any constant $a \in \F_p^d$ we have
\begin{equation*}
\sum_{ \substack{y \in V\\Q(y) = \tilde Ly+a }}e_p\brac{\ell y } = \E_{\gamma\in \F_p^d}\sum_{y \in V} e_p\brac{\ell y + \gamma^T\sqbrac{Q(y) - \tilde L y-a}}
= |V|p^{-d}1_{\ell \in V^\perp}+ O(|V|p^{d-R/2}).
\end{equation*}
Therefore, returning to \eqref{eq:orthog-inner-sum}, either $R \ll d + \log(2/\eta)$ or on writing 
$$
V_{x,h,k} := \set{y\in H : B(2x+h+k, y) + Ly = 0} = \ang{(2x+h+k)^T B+L}^\perp,
$$ we have
\begin{multline*}
\eta^4 |H|^3 p^{-5d}\ll  \sum_{x, h,k}f(x) \overline{f(x+h)f(x+k)}f(x+h+k)e_p\brac{-h^TMk+ \ell h}\\\times  1_{h^T(M^T-M)\in V_{x,h,k}^\perp}\ p^{-\codim V_{x,h,k}}.
\end{multline*}

By Lemma \ref{lem:size-of-diff-sub}, for all but $|H|^3p^{d-R}$ triples $(x,h,k)$ we have
$
\codim V_{x,h,k} = d
$.
Consequently either $R \ll d + \log(2/\eta)$ or
\begin{multline*}
\eta^4 |H|^3 p^{-4d}\ll  \sum_{x, h,k}f(x) \overline{f(x+h)f(x+k)}f(x+h+k)\\ 
\times e_p\brac{-h^TMk+\ell h}  1_{h^T(M^T-M)\in V_{x,h,k}^\perp}.
\end{multline*}

Using Cauchy-Schwarz to double the $k$ variable gives
\begin{multline*}
\eta^8 |H|^4 p^{-6d}\ll \\ \sum_{x, h,k,k'}f(x+k')\overline{f(x+k)f(x+h+k')}f(x+h+k)1_{Q^{-1}(0)}(x)1_{Q^{-1}(0)}(x+h)\\ 
\times e_p\sqbrac{h^TM(k'-k)}  1_{h^T(M^T-M)\in V_{x,h,k}^\perp\cap V_{x,h,k'}^\perp}.
\end{multline*}
A change of variables of the form $x \mapsto x-k'$, followed by a further change of the form $k \mapsto k+k'$, we deduce that
\begin{multline*}
\eta^8 |H|^4 p^{-6d}\ll 
 \sum_{x, h,k}f(x)\overline{f(x+k)f(x+h)}f(x+h+k)\\ 
\times e_p\brac{-h^TMk}  \sum_{\substack{k'\\Q(x-k')=0\\Q(x+h-k') = 0}}1_{h^T(M^T-M)\in V_{x-k',h,k+k'}^\perp\cap V_{x-k',h,k'}^\perp}
\end{multline*}
As discussed in \S\ref{sec:notation},  we have
$$
V_{x,h,k}^\perp = \brac{\ang{B(2x+h+k) + L}^\perp}^\perp = \ang{B(2x+h+k) + L}.
$$
By Lemma \ref{lem:size-of-diff-sub},  the total number of quadruples $(x,h,k,k')$ for which 
$$
 \set{0} \neq V_{x-k',h,k+k'}^\perp\cap V_{x-k',h,k'}^\perp = \ang{B(2x+h+k-k')+L}\cap \ang{B(2x+h-k')+L} 
$$ 
is at most $|H|^4p^{2d-R}$. Therefore, either $R \ll d + \log(2/\eta)$ or
\begin{multline*}
\eta^8 |H|^4 p^{-6d}\ll 
 \sum_{x, h,k}f(x)\overline{f(x+k)f(x+h)}f(x+h+k)\\ 
\times e_p\brac{-h^TMk}1_{h^T(M^T-M) = 0}  \sum_{\substack{k'\\Q(x-k')=0\\Q(x+h-k') = 0}}1.
\end{multline*}
By Corollary \ref{cor:pseud-once-diff}, for fixed $x$ and $h$ we have 
$$
\sum_{\substack{k'\\Q(x-k')=0\\Q(x+h-k') = 0}}1 = |H|p^{-2d} + O(|H|p^{-R/2}).
$$
It follows that either $R \ll d + \log(2/\eta)$ or
\begin{multline*}
\eta^8 |H|^3 p^{-4d}\ll \sum_{\substack{x,h,k\\ h^T(M^T-M)=0}}f(x)\overline{f(x+k)f(x+h)}f(x+h+k)
 e_p\brac{-h^TMk}\\
 = \sum_{h^T(M^T-M)=0}\ \Bigabs{\sum_x f(x)\overline{f(x+h) } 
 e_p\sqbrac{\trecip{2}h^T(M+M^T)^Tx}}^2\\
 \leq \sum_{h}\bigabs{\widehat{\Delta_h f}(\trecip{2}h^T\sqbrac{M+M^T})}^2.
 \end{multline*}
 
 We can replace the $L^2$ bound above with the $L^1$ bound in \eqref{eq:phi-linear-4} by noting that $\Delta_h f$ is supported on $Q^{-1}(0)\cap \sqbrac{Q^{-1}(0)-h}$ then using Corollary \ref{cor:pseud-once-diff}. This gives
 \begin{equation*}
 \sum_{h}\bigabs{\widehat{\Delta_h f}(\trecip{2}h^T\sqbrac{M+M^T}h)}^2 \leq \brac{|H|p^{-2d} +O(|H|p^{-R/2})}\sum_h\bigabs{\widehat{\Delta_h f}(\trecip{2}\sqbrac{M^T+M}h)}.
\end{equation*}
Hence either $R \ll d +\log(2/\eta)$ or the result follows.
\end{proof}

\begin{lemma}[Integration argument]\label{lem:int-arg}
Let $H$ be a finite vector space over $\F_p$ with $p$ odd and let $Q = (q_1, \dots, q_d)$ be a $d$-tuple of quadratic polynomials on $H$ whose rank (as a $d$-tuple) is at least $R$. Let $f : H \to \C$ be a 1-bounded function with $\supp(f) \subset Q^{-1}(0)$. Suppose that there exists a symmetric bilinear form $(x,y) \mapsto x^TMy$  from $H\times H$ to $\F_p$  such that
\begin{equation*}
\sum_h\bigabs{\widehat{\Delta_h f}(h^TM)} \geq \eta|H|^2p^{-2d}.
\end{equation*}
Then either $R \ll d $ or there exists $\ell \in H^* $ such that
$$
\Bigabs{\sum_x f(x) e_p(-x^TMx + \ell x)}\gg \eta^{2} |Q^{-1}(0)| \asymp \eta^{2} |H|p^{-d}
$$
\end{lemma}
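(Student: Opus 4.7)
The plan is to ``integrate'' the phase $h^T M$ into a quadratic phase on $f$, reducing the hypothesis to an $L^2$-type lower bound on a related function $g$, and then to invoke the relative $U^2$-inverse estimate of Corollary~\ref{cor:quad-level-size}. Since $p$ is odd, define
\[
g(x) := f(x)\, e_p\bigl(-\tfrac{1}{2}\, x^T M x\bigr),
\]
which is $1$-bounded with $\supp(g)\subset Q^{-1}(0)$. The symmetry of $M$ gives $(x+h)^T M (x+h) - x^T M x = 2 h^T M x + h^T M h$, and therefore
\[
g(x)\overline{g(x+h)} = f(x)\overline{f(x+h)}\, e_p\bigl(h^T M x + \tfrac{1}{2}\, h^T M h\bigr),
\]
so that $\bigabs{\widehat{\Delta_h g}(0)} = \bigabs{\widehat{\Delta_h f}(h^T M)}$. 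The hypothesis becomes $\sum_h \bigabs{\widehat{\Delta_h g}(0)} \geq \eta |H|^2 p^{-2d}$.

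Next I would apply Cauchy--Schwarz in $h$ to deduce $\sum_h \bigabs{\widehat{\Delta_h g}(0)}^2 \geq \eta^2 |H|^3 p^{-4d}$. A routine expansion identifies the left-hand side with $\norm{g}_{U^2}^4$ (the autocorrelation formulation of the $U^2$-norm). Since $g$ is $1$-bounded and supported in the quadratic level set $Q^{-1}(0)$, the relative $U^2$-inverse estimate~\eqref{eq:u2-quad-0} of Corollary~\ref{cor:quad-level-size} yields, unless $R \ll d$,
\[
\norm{g}_{U^2}^4 \ll \normnorm{\hat g}_\infty \cdot |H|^2 p^{-3d}.
\]
Comparing the two bounds gives $\normnorm{\hat g}_\infty \gg \eta^2 |H| p^{-d} \asymp \eta^2 |Q^{-1}(0)|$.

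Finally, choosing $\ell \in H^*$ essentially maximising $\bigabs{\hat g(\ell)}$, and unpacking the definition of $g$, we obtain
\[
\Bigabs{\sum_x f(x)\, e_p\bigl(-\tfrac{1}{2}\, x^T M x + \ell x\bigr)} = \bigabs{\hat g(\ell)} \gg \eta^2 |Q^{-1}(0)|,
\]
which is the desired bound (after absorbing the factor $\tfrac{1}{2}$ into $M$ via the natural rescaling). All substantive work is done by the relative $U^2$-inverse~\eqref{eq:u2-quad-0}, which converts the $L^2$ information about $\widehat{\Delta_h g}(0)$ into $L^\infty$ information about $\hat g$; the remaining steps are elementary Fourier bookkeeping and one Cauchy--Schwarz. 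The only subtlety worth flagging is the initial choice of $g$, which trades an $h$-dependent Fourier phase for a purely physical-side quadratic phase and is the unique place where the symmetry of $M$ is used; once $g$ is defined correctly, no further obstacle arises.
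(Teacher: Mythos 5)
Your proof is correct and takes essentially the same approach as the paper: use the symmetry of $M$ to ``integrate'' the bilinear phase $h^TMx$ into a quadratic phase carried by $f$, apply Cauchy--Schwarz once, and invoke the relative $U^2$-inverse estimate \eqref{eq:u2-quad-0} of Corollary~\ref{cor:quad-level-size}. The only organizational difference is that you apply Cauchy--Schwarz directly in the $h$-variable and recognize $\sum_h |\widehat{\Delta_h g}(0)|^2 = \norm{g}_{U^2}^4$, whereas the paper first introduces a $1$-bounded modulation of $h$ to open the absolute values, passes to the Fourier side via Parseval, and applies Cauchy--Schwarz there; both bookkeepings yield the same inequality $\eta^2 |H|^3 p^{-4d} \ll \norm{F}_{U^2}^4$. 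One small caveat: your remark about ``absorbing the factor $\tfrac12$ into $M$'' is not literally valid since $M$ is fixed by the hypothesis, so what you actually establish is the conclusion with phase $-\tfrac12 x^T M x + \ell x$ rather than $-x^TMx + \ell x$; but the paper's own proof silently drops the same factor of $\tfrac12$ when passing from the symmetry identity to the next display, and the discrepancy is harmless because Theorem~\ref{thm:u3-inverse} only asks for correlation with \emph{some} quadratic polynomial.
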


\begin{proof}
Introducing a modulation to remove absolute values, there exists a 1-bounded function $g : H \to \C$ such that
$$
\eta |H|^2p^{-2d} \leq \sum_{h,x}g(h)f(x)\overline{f(x+h)} e_p(h^TMx).
$$
The symmetry of $M$ gives  the identity $h^TMx = \trecip{2}\sqbrac{(x+h)^TM(x+h)-x^TMx -h^TMh }$. Hence
\begin{equation*}
\eta |H|^2p^{-2d} \leq \sum_{h,x}g(h)e_p(-h^TMh)f(x)e_p(-x^TMx)\overline{f(x+h)} e_p\sqbrac{(x+h)^TM(x+h)}.
\end{equation*}

Write $G(h) := g(h)e_p(-h^TMh)$ and $F(x) := f(x) e_p(-x^TMx)$. Then by orthogonality and Cauchy-Schwarz:
\begin{equation*}
\eta |H|^2p^{-2d} \leq \sum_{h,x}G(h)F(x)\overline{F(x+h)} = \E_{ \ell \in H^*} \hat{G}(\ell)\bigabs{\hat{F}(\ell)}^2 \leq \norm{G}_2 \norm{F}_{U^2}^2 \leq |H|^{1/2} \norm{F}_{U^2}^2.
\end{equation*}

Employing the $U^2$-inverse on high-rank quadratic level sets, as stated in \eqref{eq:u2-quad-0} of Corollary \ref{cor:quad-level-size}, either $R \ll d $ or  
$
\normnorm{\hat F}_\infty \gg \eta^2|H|p^{-d}
$.
\end{proof}

Combining Lemmas \ref{lem:lin-cor}, \ref{lem:sym-arg} and \ref{lem:int-arg} then yields a proof of the relative inverse theorem for the $U^3$-norm (Theorem \ref{thm:u3-inverse}).

\section{Bounding the Ramsey number of Brauer quadruples via sparsity-expansion}\label{sec:ramsey-bound}
In the following sections, we demonstrate how to execute a density increment on quadratic level sets, using our relative $U^3$-inverse theorem (Theorem \ref{thm:u3-inverse}) as a key black box. We have chosen to showcase methods by proving an exponential bound on the Ramsey number of Brauer quadruples (Theorem \ref{thm:exp-brauer}), since it is an application  we have been unable to prove using other means.

We begin by reducing Theorem \ref{thm:exp-brauer} to the following dichotomy. A result of this type appears in work of Chapman and the  author \cite{chapman2020ramsey} on double exponential bounds  for the Ramsey number of  Brauer quadruples, however that result is not relative to quadratic level sets.
\begin{lemma}[Sparsity-expansion dichotomy]\label{lem:brauer-sp-exp-exp}
	 Let $p$ be an odd prime. For any $\alpha, \beta > 0$ and sets $A_1, \dots, A_r \subset \F_p^n$, there exists a subspace $H\leqslant\F_p^{n}$ and quadratic forms $Q = (q_1, \dots, q_d)$ (all defined on $H$) such that 
	$$
	d\ll_p (r/\alpha\beta)^{O_p(1)}, \qquad \codim_{\F_p^n}(H) \ll_p (r/\alpha\beta)^{O_p(1)}
	$$ 
	and for each $A_i$ either of the following hold:
	\begin{itemize}
		\item (Sparsity).  
		\begin{equation}\label{eqnSchurSparse}
			|A_i \cap H \cap Q^{-1}(0)| < \alpha |H\cap Q^{-1}(0)|;
		\end{equation}
		\item (Expansion).  
		\begin{equation}\label{eqnSchurExp}
			|\set{y \in H\cap Q^{-1}(0) : \exists x, x+y, x+2y \in A_i}| > \brac{1 - \beta} |H\cap Q^{-1}(0)|.
		\end{equation}
	\end{itemize} 
\end{lemma}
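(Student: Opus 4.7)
The plan is an iterative density-increment argument on the pair $(H,Q)$, tracking the level set $S := H\cap Q^{-1}(0)$. We initialise with $H = \F_p^n$ and $Q$ empty, so that $S = \F_p^n$. At each stage we test the dichotomy for every colour class $A_i$ on the current $S$: if \eqref{eqnSchurSparse} or \eqref{eqnSchurExp} holds for all $i$ we terminate, and otherwise we select an $A_i$ violating both alternatives. For such an $A_i$ we have $|A_i\cap S| \geq \alpha|S|$ and the ``bad set''
$$
B := \bigset{y \in S : \text{no } x \text{ satisfies } x,\ x+y,\ x+2y \in A_i}
$$
has size at least $\beta|S|$. The aim is to use these two facts to refine $(H,Q)$ to a pair $(H',Q')$ with $\codim_H H' + (d'-d) = O_p(1)$, on which the density of $A_i$ in $S'$ exceeds its density in $S$ by $\gg_p (\alpha\beta/r)^{O_p(1)}$, and then iterate.

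For the refinement step, the vanishing identity
$$
\sum_{y\in B}\sum_{x\in \F_p^n} 1_{A_i}(x)1_{A_i}(x+y)1_{A_i}(x+2y) = 0
$$
decouples across cosets of $H$ (since $y\in H$), and pigeonhole picks out a single coset --- which we translate to $H$ itself --- on which the 3-AP count with differences in $B$ still vanishes while $A_i$ remains $\alpha$-dense on $S$. Fourier-expanding this vanishing count on $H$, the main contribution (from the trivial character) is of size $\sim\alpha^3\beta|S|^4|H|^{-2}$, and cancelling it forces a non-trivial $\gamma \in \hat H$ at which either $1_{A_i\cap S}$ or $1_B$ has a Fourier coefficient of magnitude $\gg(\alpha\beta)^{O(1)}|S|$. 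The Fourier uniformity of high-rank level sets (Lemma~\ref{lem:pseud-quad-diff} and Corollary~\ref{cor:quad-level-size}), combined with the restriction estimate and relative $U^2$-inverse theorem of \S\ref{sec:u2-inverse}, then upgrades this largeness into genuine correlation of the densely-supported function on $S$ with a linear phase $e_p(\ell x)$ for some $\ell\in H^*$. Pigeonholing on the value of $\ell x$ restricts $S$ to a fibre $\{x\in S : \ell x = t_0\}$, delivering the density increment. Structurally: if $\ell$ is linearly independent of the ``derivatives'' of the current $q_j$ we adjoin a degenerate linear polynomial to $Q$ (giving $d' = d+1$); otherwise we pass to the codimension-one subspace $H' = H\cap\ker\ell$. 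Rank control of $Q'$ is maintained using Lemma~\ref{lem:rank-decrement}, with a rank-regularisation step inserted if necessary.

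Since the density of $A_i$ grows by a definite additive amount per iteration and is bounded by $1$, the process halts after $O_p((r/\alpha\beta)^{O_p(1)})$ iterations, yielding the claimed polynomial bounds on $d$ and $\codim_{\F_p^n} H$. The main obstacle is executing the Fourier density increment \emph{on} the sparse level set $S$ without paying a factor of $|H|/|S|\sim p^d$ at each step: a naive density-transference of $\F_p^n$-Fourier bounds would cause the complexity to blow up exponentially across iterations. The pseudorandom-set toolkit developed in \S\S\ref{sec:u2-inverse}--\ref{sec:high-rank} (the spectral and restriction estimates on Fourier-uniform sets together with the relative $U^2$-inverse theorem) is tailored precisely to avoid this loss, and is why the paper develops these tools before attempting the density increment. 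A secondary subtlety is ensuring that the newly-adjoined linear form at each step is sufficiently generic that rank control survives iterated refinements; this is handled by Lemma~\ref{lem:size-of-diff-sub} together with an appropriate case-split on whether the new form extends the existing system transversally.
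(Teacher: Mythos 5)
The core of your proposal is unsound because you are trying to close the density increment with linear Fourier analysis, but the Brauer counting operator has complexity two, not one. The operator
$$
\sum_{x,y} f_0(x)\,f_1(x+y)\,f_2(x+2y)\,g(y)
$$
is \emph{not} controlled by the $U^2$-norm of the $f_i$: the system $\sqbrac{(1,0),(1,1),(1,2),(0,1)}$ has Cauchy--Schwarz complexity two, which is precisely why the paper proves $U^3$-control in Lemma~\ref{thm:brauer-inverse} (via two Cauchy--Schwarz applications and Corollary~\ref{cor:pseud-thrice-diff}). When you ``Fourier-expand the vanishing count on $H$ and cancel the main term,'' what is left is a genuinely two-dimensional Fourier sum of the schematic shape $\sum_{\gamma_2,\gamma_3}\hat 1_A(-\gamma_2-\gamma_3)\hat 1_A(\gamma_2)\hat 1_A(\gamma_3)\hat 1_B(-\gamma_2-2\gamma_3)$; this does not reduce to a single large non-trivial coefficient of $1_{A_i\cap S}$ or $1_B$, and the standard counterexamples to such reductions are exactly quadratic level sets. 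In particular the relative $U^2$-inverse theorem of \S\ref{sec:u2-inverse} has no purchase here, since its hypothesis (a large $U^2$-norm of the balanced function) is never verified. Because of this, your proposal to adjoin \emph{linear} forms (``degenerate linear polynomials'') to $Q$ cannot possibly accumulate the structure needed to terminate the iteration; the iterate must accumulate genuine \emph{quadratic} forms. This is the whole point of the paper: the density increment requires the relative $U^3$-inverse theorem (Theorem~\ref{thm:u3-inverse}), which produces a quadratic phase $e_p\sqbrac{q(x)+\ell x}$, and the increment comes from pigeonholing on the level sets of $q+\ell$.

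There are two further, smaller departures from a workable argument. First, you track the density of $A_i$ on the current level set $S=H\cap Q^{-1}(0)$, but the increment produced by the $U^3$-inverse lands on a level set $(Q+L)^{-1}(a)\cap(q+\ell)^{-1}(b)$ for some (uncontrolled) affine twist and level. The paper therefore tracks the \emph{maximal level set density} $\delta_Q$ of Definition~\ref{def:max-level-density}, maximising over translates $x$, linear twists $L$ and levels $a$, and shows (Lemma~\ref{lem:density-preservation}) that this quantity is monotone under adding forms; without this flexibility the increment does not survive passage to the new level set. Second, rank control is not a matter of an ad-hoc case split on whether $\ell$ ``extends transversally'': after adjoining a new (quadratic) form the tuple may drop rank, and restoring it requires the dedicated regularisation step of Lemma~\ref{lem:high-rank-partitioning}, which trades a bounded loss in codimension and tuple length for rank $\geq R$ while preserving $\delta_Q$. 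Your informal invocation of Lemma~\ref{lem:size-of-diff-sub} does not substitute for this.
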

Theorem \ref{thm:exp-brauer} follows on setting $p = 3$ in the following, which is a short deduction from the sparsity-expansion dichotomy (Lemma \ref{lem:brauer-sp-exp-exp}).

\begin{theorem}[Exponential upper bound for Brauer quadruples in finite vector spaces]\label{thm:exp-brauer-p}
Suppose that there exists an $r$-colouring of $\F_p^n\setminus\set{0}$ with no monochromatic \textbf{Brauer quadruple}
\begin{equation*}
x,\ y, \ x+y,\ x+2y.
\end{equation*}
Then $n \ll_p r^{O_p(1)}$.
\end{theorem}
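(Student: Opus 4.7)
The plan is to deduce Theorem \ref{thm:exp-brauer-p} from the sparsity-expansion dichotomy (Lemma \ref{lem:brauer-sp-exp-exp}) by a direct pigeonhole argument. Given an $r$-colouring $A_1, \dots, A_r$ of $\F_p^n\setminus\set{0}$ with no monochromatic Brauer quadruple, I would apply the lemma with the symmetric choice $\alpha = \beta = 1/(3r)$. This produces a subspace $H \leq \F_p^n$ and quadratic forms $Q = (q_1, \dots, q_d)$ with $\codim_{\F_p^n}(H) \ll_p r^{O_p(1)}$ and $d \ll_p r^{O_p(1)}$.

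Before exploiting the dichotomy I must ensure $|H \cap Q^{-1}(0)|$ is large enough to be useful. Presuming the construction of $Q$ in Lemma \ref{lem:brauer-sp-exp-exp} carries a sufficient rank guarantee (and if not, a standard rank-reduction pre-processing step can be inserted), Corollary \ref{cor:quad-level-size} gives $|H \cap Q^{-1}(0)| \asymp |H| p^{-d}$, which is at least $2$ once $\dim H$ exceeds a polynomial in $r$ and $p$. The key pigeonhole step is then the following: since the sets $A_i$ cover $H \cap Q^{-1}(0)$ with at most one exception (the origin), some colour $A_i$ satisfies
\[
|A_i \cap H \cap Q^{-1}(0)| \geq \frac{|H \cap Q^{-1}(0)| - 1}{r} > \alpha |H \cap Q^{-1}(0)|,
\]
so the sparsity conclusion \eqref{eqnSchurSparse} fails for this $i$ and therefore the expansion conclusion \eqref{eqnSchurExp} must hold.

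Write $E_i \subset H \cap Q^{-1}(0)$ for the set of $y$ admitting $x$ with $x, x+y, x+2y \in A_i$. By expansion $|E_i| > (1-\beta)|H \cap Q^{-1}(0)|$, and we have already shown $|A_i \cap H \cap Q^{-1}(0)| > \alpha |H \cap Q^{-1}(0)|$. Since $\alpha + \beta < 1$, these two subsets intersect: choose $y$ in the intersection. Then $y \in A_i$ ensures $y \neq 0$, and the quadruple $x, y, x+y, x+2y$ is monochromatic in colour $i$, contradicting the choice of colouring.

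The quantitative statement now follows immediately: the entire argument goes through provided $n$ exceeds the codimension of $H$ by some margin that is polynomial in $r$ (with $p$-dependent constants), and so any $n$ admitting a Brauer-free $r$-colouring must satisfy $n \ll_p r^{O_p(1)}$. The only obstacle to the plan is the delicate issue of verifying that Lemma \ref{lem:brauer-sp-exp-exp} produces a $Q$ whose rank is high enough to invoke the pseudorandomness results of \S\ref{sec:high-rank}; this amounts to bookkeeping within the density increment proof of the dichotomy itself (which in turn relies crucially on Theorem \ref{thm:u3-inverse}), rather than in the deduction above.
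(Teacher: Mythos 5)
Your overall structure --- take $\alpha,\beta$ of order $1/r$, pigeonhole to find a colour class that is not sparse (hence expanding), then intersect that class with the expansion set to exhibit a monochromatic quadruple --- matches the paper's proof. But there is a gap in how you rule out the degenerate case $|H\cap Q^{-1}(0)|\leq 1$. You invoke Corollary~\ref{cor:quad-level-size} to get $|H\cap Q^{-1}(0)|\asymp|H|p^{-d}$, but that corollary requires $Q$ to have high rank, and Lemma~\ref{lem:brauer-sp-exp-exp} as stated gives no rank guarantee on $Q$. You acknowledge this yourself and propose to ``presume'' it or defer to bookkeeping inside the proof of the dichotomy; that is not a deduction from the lemma's statement, and patching it up would mean re-opening and re-stating the dichotomy.

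The paper dispenses with rank entirely via the Chevalley--Warning theorem: $Q$ is a $d$-tuple of \emph{homogeneous} quadratic forms, so $0\in H\cap Q^{-1}(0)$, and if the origin is the only common zero in $H$ then Chevalley--Warning forces $\dim H\leq 2d$. Since $d\ll_p r^{O_p(1)}$ and $\codim_{\F_p^n}H\ll_p r^{O_p(1)}$ from the dichotomy, this degenerate case immediately gives $n\ll_p r^{O_p(1)}$ with no pseudorandomness input at all. In the remaining case $|H\cap Q^{-1}(0)|\geq 2$ your pigeonhole and intersection argument goes through exactly as in the paper (which takes $\alpha=\beta=1/2r$; your $1/3r$ works equally well). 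The Chevalley--Warning observation is the missing ingredient that makes the theorem a clean, unconditional corollary of the sparsity-expansion dichotomy as stated.
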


\begin{proof}
Let $\F_p^n\setminus\set{0} = C_1\cup\dots\cup C_r$ be an $r$-colouring and apply Lemma \ref{lem:brauer-sp-exp-exp} to the colour classes $C_i$, taking $\alpha = \beta = 1/2r$. By the pigeon-hole principle, some $C_i$ satisfies $|C_i\cap H \cap Q^{-1}(0)\setminus\set{0}| \geq \recip{r}|H\cap Q^{-1}(0)\setminus\set{0}|$. Hence, provided that there exists $h \in H\setminus\set{0}$ with $Q(h) = 0$ (so that $|H\cap Q^{-1}(0)|\geq 2$), we deduce that $|C_i \cap H\cap Q^{-1}(0) | \geq \recip{2r} |H\cap Q^{-1}(0)|$. It follows from the sparsity-expansion dichotomy (Lemma \ref{lem:brauer-sp-exp-exp}) and inclusion-exclusion that
\begin{multline*}
|\set{y \in C_i \cap H \cap Q^{-1}(0): \exists x, x+y , x+2y \in C_i}| > \\\brac{1 - \trecip{2r}} |H\cap Q^{-1}(0)| +\trecip{2r} |H\cap Q^{-1}(0)| - |H\cap Q^{-1}(0)| = 0.
\end{multline*}

It follows that we obtain a monochromatic Brauer configuration unless $H \cap Q^{-1}(0) = \set{0}$. By the Chevalley-Warning theorem \cite{ChevalleyWarning}, the latter  implies that $\dim H \leq 2d$. Using our bounds on $d$ and $\codim(H)$,   we obtain a monochromatic Brauer configuration unless $n \ll_p (r/\alpha\beta)^{O_p(1)} \ll_p r^{O_p(1)}$.
\end{proof}

We prove our sparsity-expansion dichotomy (Lemma \ref{lem:brauer-sp-exp-exp}) in \S\ref{sec:sparsity-expansion-density-increment}. The proof uses a density increment argument, a key component of which is our relative $U^3$-inverse theorem paired  with the following, which says that the Brauer counting operator (relativised to a quadratic level set) is controlled by the $U^3$-norm.

\begin{lemma}[$U^3$-control of Brauer on high-rank quadratic level sets]\label{thm:brauer-inverse}
Let $H$ be a finite vector space over $\F_p$ with $p$ odd, let $Q = (q_1, \dots, q_d)$ be quadratic polynomials on $H$ whose rank (as a $d$-tuple) is at least $R$. Write $Q_0$ for the homogeneous quadratic part of $Q$. For any 1-bounded functions $f_0, f_1, f_2, g : H \to \C$ with $\supp(f_i) \subset Q^{-1}(0)$ and $\supp(g) \subset Q_0^{-1}(0)$, if  
\begin{multline}\label{eq:brauer-non-unif}
\Bigabs{\sum_{x,y} f_0(x)f_1(x+y)f_2(x+2y)g(y)} \geq\\
 \eta\Bigabs{\sum_{x,y} 1_{Q^{-1}(0)}(x)1_{Q^{-1}(0)}(x+y)1_{Q^{-1}(0)}(x+2y)1_{Q_0^{-1}(0)}(y)},
\end{multline}
then either $R\ll d$ or for each $f_i$ we have $\normnorm{f_i}_{U^3} \gg \eta \normnorm{1_{Q^{-1}(0)}}_{U^3}$. 
\end{lemma}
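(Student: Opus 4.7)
The statement is a relative generalised von Neumann inequality for the Brauer configuration, which has Cauchy-Schwarz complexity two and is therefore controlled by the $U^3$-norm. In the un-relativized setting (functions on all of $H$), three Cauchy-Schwarz applications suffice; the novelty here is to control the expression by $\|1_{Q^{-1}(0)}\|_{U^3}$ rather than $\|1_H\|_{U^3}$, exploiting the pseudorandomness of high-rank quadric level sets.

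By the symmetries of the Brauer configuration (e.g., $(x,y)\mapsto (x+2y,-y)$ swaps the roles of $f_0$ and $f_2$, and $(x,y)\mapsto(x-y,y)$ places $f_1$ in the role previously played by $f_0$), it suffices to prove the conclusion for one of the $f_i$, say $f_0$. I would then apply Cauchy-Schwarz three times to $T := \sum_{x,y} f_0(x)f_1(x+y)f_2(x+2y)g(y)$, each step doubling one variable and introducing a shift $k_i$, successively eliminating $g$, then $f_2$, then $f_1$ while preserving the $\Delta_{k_3}\Delta_{k_2}\Delta_{k_1}$-differenced $f_0$. After three such steps, one obtains a bound of the form
\[
|T|^8 \ll W \cdot \|f_0\|_{U^3}^8,
\]
where $W$ is a ``background'' factor comprising shifted copies of $|f_1|^2$, $|f_2|^2$, and $|g|^2$ restricted to $Q^{-1}(0)$, $Q_0^{-1}(0)$ and their translates.

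The crux is to estimate $W$ with the correct normalisation. Naively bounding $\|f_j\|_2^2 \leq |Q^{-1}(0)|$ at each Cauchy-Schwarz step accumulates a loss of $p^{\Omega(d)}$, leading to a strictly weaker conclusion. The remedy is to carry the full pseudorandomness of the differenced supports through each step: by Lemma \ref{lem:struct-diff}, the set $Q^{-1}(0) \cap (Q^{-1}(0) - k_1)$ lies on a coset of a codimension-$d$ affine subspace and is Fourier uniform there by Corollary \ref{cor:pseud-once-diff}; analogous pseudorandomness of twice- and thrice-differenced supports (needed at subsequent steps) follows by iterating the same lemmas. Combining these estimates with the asymptotics $M \asymp |H|^2 p^{-3d}$ and $\|1_{Q^{-1}(0)}\|_{U^3}^8 \asymp |H|^4 p^{-7d}$ — both consequences of the Weyl bound (Lemma \ref{lem:weyl-bound}) and Corollary \ref{cor:quad-level-size}, noting the Brauer dependence $Q(x)-2Q(x+y)+Q(x+2y) = 2Q_0(y)$ which removes one condition — shows that $W$ has the correct size to yield $\|f_0\|_{U^3}^8 \gg \eta^8 \|1_{Q^{-1}(0)}\|_{U^3}^8$ from $|T|^8 \geq \eta^8 M^8$, provided $R \gg d$. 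The main obstacle is thus the careful bookkeeping in the weighted Cauchy-Schwarz: each step must be performed so that the differenced functions inherit the sharp density of the differenced level sets, rather than the crude $L^2$-bound, which is where the whole proof would collapse were it not for the pseudorandomness machinery of Section~\ref{sec:high-rank}.
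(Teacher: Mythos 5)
Your high-level strategy — three Cauchy–Schwarz applications, tracking the sparse supports carefully — is the one the paper also uses, and your computation $\normnorm{1_{Q^{-1}(0)}}_{U^3}^8 \asymp |H|^4p^{-7d}$ is in fact \emph{correct} (it is the paper's own proof that asserts $\asymp |H|^4p^{-4d}$, which overcounts: Corollary~\ref{cor:pseud-thrice-diff} controls $|Q^{-1}(0)\cap(V_{h_1,h_2,h_3}+x_0)|$ for an arbitrary fixed $x_0$, but the thrice-differenced set $\supp(\Delta_{h_1,h_2,h_3}1_{Q^{-1}(0)})$ is \emph{empty} unless $B(h_i,h_j)=0$ for $i\neq j$, which happens only for a $p^{-3d}$ proportion of triples; the paper's claimed asymptotic does not account for this). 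Likewise $M\asymp |H|^2p^{-3d}$ is correct.

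The genuine gap is your unsubstantiated assertion that ``$W$ has the correct size to yield $\normnorm{f_0}_{U^3}^8 \gg \eta^8 \normnorm{1_{Q^{-1}(0)}}_{U^3}^8$.'' Carrying out the three Cauchy–Schwarz steps with the sharp bounds $\|g\|_2^2 \ll |H|p^{-d}$ and $\sum_{h,x}|\Delta_h f_j(x)|^2 \le |Q^{-1}(0)|^2 \ll |H|^2p^{-2d}$ yields
\[
\eta^4|H|^4p^{-8d} \ll \normnorm{f_i}_{U^3}^4\normnorm{f_j}_{U^3}^4 ,
\]
and combined with the trivial bound $\normnorm{f_j}_{U^3}^4\le \normnorm{1_{Q^{-1}(0)}}_{U^3}^4\asymp |H|^2p^{-7d/2}$ this only gives $\normnorm{f_i}_{U^3}^4\gg \eta^4|H|^2p^{-9d/2}\asymp \eta^4p^{-d}\normnorm{1_{Q^{-1}(0)}}_{U^3}^4$ — a loss of $p^{d/4}$ in the $U^3$-norm, not the claimed gain-free conclusion. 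One can check the loss is already present in the extremal case $f_i=1_{Q^{-1}(0)}$, $g=1_{Q_0^{-1}(0)}$: the middle Cauchy–Schwarz (in $(h,x)$, splitting off $\Delta_h f_0$) is slack by a factor $p^d$, because $\Delta_h f_0(x)$ is supported on the codensity-$p^{2d}$ set $\{x,x+h\in Q^{-1}(0)\}$ while the companion factor $\sum_y \Delta_h f_1(x+y)\Delta_h f_2(x+2y)$ is supported on the larger codensity-$p^d$ set $\{Q(x)=Q(x+h)\}$. To close this gap one would have to restrict the middle Cauchy–Schwarz to $\{x,x+h\in Q^{-1}(0)\}$ and then show that the resulting restricted version of $S_2$ admits an upper bound $\ll p^{-d}\normnorm{f_i}_{U^3}^4\normnorm{f_j}_{U^3}^4$ — a step that genuinely needs the pseudorandomness of $Q^{-1}(0)$ and is not a routine Cauchy–Schwarz. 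Since $d$ grows in the density-increment iteration of Section~\ref{sec:sparsity-expansion-density-increment}, the $p^{-\Omega(d)}$ loss cannot be absorbed into the $\eta^{O(1)}$; the proof requires the stronger statement as given. As written, neither your sketch nor the paper's own proof establishes it.
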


The remainder of this section is taken up with proving this result. We begin by estimating the right-hand side of \eqref{eq:brauer-non-unif}.
\begin{lemma}[Counting Brauer quadruples on high-rank level sets]\label{lem:counting-brauer}
Let $H$ be a finite vector space over $\F_p$ with $p$ odd, let $Q = (q_1, \dots, q_d)$ be quadratic polynomials on $H$ whose rank (as a $d$-tuple) is at least $R$. Write $Q_0$ for the homogeneous quadratic part of $Q$.  Then for any  $A \subset Q_0^{-1}(0)$ we have
\begin{equation}\label{eq:number-brauer-quad-level}
\sum_{x, y} 1_{Q^{-1}(0)}(x) 1_{Q^{-1}(0)}(x+y)1_{Q^{-1}(0)}(x+2y) 1_A(y)\\
 = |H||A|p^{-2d} + O\brac{|H|^2p^{-R/2}}.
\end{equation}
\end{lemma}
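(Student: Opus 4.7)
The plan is to expand each indicator $1_{Q^{-1}(0)}$ via orthogonality and exploit the rank hypothesis through the Weyl bound (Lemma~\ref{lem:weyl-bound}) together with a direct kernel count. Introducing Fourier parameters $\alpha,\beta,\gamma\in\F_p^d$ gives
\[
S = \E_{\alpha,\beta,\gamma}\sum_{x,y} 1_A(y)\,e_p\bigl(\alpha\cdot Q(x)+\beta\cdot Q(x+y)+\gamma\cdot Q(x+2y)\bigr),
\]
where $S$ denotes the left-hand side of \eqref{eq:number-brauer-quad-level}. Using the identity $Q(x+ty) = Q(x) + 2tB(x,y) + t^2 Q_0(y) + tLy$ (valid coordinatewise, with $B$ and $L$ denoting the tuples of symmetric bilinear forms and homogeneous linear parts associated to $Q$), the phase becomes
\[
(\alpha{+}\beta{+}\gamma)\cdot Q(x) \; + \; 2(\beta{+}2\gamma)\cdot B(x,y) \; + \; (\beta{+}4\gamma)\cdot Q_0(y) \; + \; (\beta{+}2\gamma)\cdot Ly.
\]

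First I would bound the contribution from $u := \alpha+\beta+\gamma\neq 0$. The inner $x$-sum is a Weyl sum whose homogeneous quadratic part in $x$ is $u\cdot Q_0$, which has rank at least $R$ by hypothesis; Lemma~\ref{lem:weyl-bound} therefore gives $O(|H|p^{-R/2})$. After averaging over the remaining parameters and summing the trivial bound over $y\in A$, this contributes $O(|H|^2 p^{-R/2})$.

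For $u = 0$, the $x$-sum collapses to $|H|\cdot 1_{v\cdot B(\cdot,y)\equiv 0}$ with $v := \beta+2\gamma$. I would then change variables $(\beta,\gamma)\leftrightarrow(v,w)$ with $w := \beta+4\gamma$, which is a bijection of $\F_p^{2d}$ since $p$ is odd. The $w$-average produces $1_{Q_0(y)=0}$, which is automatic for $y\in A\subset Q_0^{-1}(0)$, leaving
\[
\frac{|H|}{p^{2d}}\sum_{v\in\F_p^d}\sum_{y\in A} e_p(v\cdot Ly)\,1_{v\cdot B(\cdot,y)\equiv 0}.
\]
The $v=0$ term contributes exactly the main term $|H||A|p^{-2d}$. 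For $v\neq 0$, the bilinear form $v\cdot B = \sum_i v_i b_i$ has rank at least $R$ by the rank assumption on $Q$, so the set of $y\in H$ for which $v\cdot B(\cdot,y)$ vanishes identically in $x$ (equivalently, $y$ lies in the right-kernel of $v\cdot B$) has size at most $|H|p^{-R}$. Summing over the at most $p^d$ nonzero values of $v$ yields $O(|H|^2 p^{-d-R}) = O(|H|^2 p^{-R/2})$, which is absorbed into the error.

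The proof is largely algebraic bookkeeping; the only real trick is the change of variables $(\beta,\gamma)\leftrightarrow(v,w)$, which cleanly decouples the $Q_0(y)=0$ constraint (forced by $w$) from the linear--bilinear interaction governed by $v$, so that the rank hypothesis can be applied separately to each piece. There is no substantive obstacle.
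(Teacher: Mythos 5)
Your proof is correct and follows essentially the same route as the paper: expand each indicator by orthogonality, kill the terms with $\alpha+\beta+\gamma\neq 0$ via the Weyl bound, and analyse the remaining diagonal by a kernel count against the rank hypothesis. The one cosmetic difference is that the paper imposes $y\in Q_0^{-1}(0)$ to simplify the phase to $(\gamma_1+2\gamma_2)[2B(x,y)+Ly]$ before any further averaging, whereas you keep the $Q_0(y)$ term and extract the constraint from the $w$-average after changing variables $(\beta,\gamma)\mapsto(v,w)=(\beta+2\gamma,\beta+4\gamma)$; this is a tidy reorganisation but not a different argument.
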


\begin{proof}
By orthogonality
$$
1_{Q^{-1}(0)}(x) = p^{-d}\sum_{\gamma \in \F_p^d} e_p\brac{\gamma  Q(x)}.
$$ 
Substituting this into our counting operator, the left-hand side of \eqref{eq:number-brauer-quad-level} becomes $p^{-3d}$ multiplied by
\begin{equation}\label{eq:orthogonality-brauer-count}
 \sum_{\gamma_i \in \F_p^d}\sum_{y\in A}\sum_{x \in H} e_p\brac{\gamma_0  Q(x)+\gamma_1  Q(x+y)+\gamma_2  Q(x+2y)}.
 \end{equation}
We may write the inner sum in \eqref{eq:orthogonality-brauer-count} in the form 
$$
\sum_{x \in H}
e_p\brac{\sqbrac{\gamma_0+\gamma_1+\gamma_2} Q(x) + \ell_{y,\gamma}(x) + c_{y,\gamma}},
$$
for some linear forms $\ell_{y,\gamma} : H \to \F_p$ and constants $c_{y,\gamma} \in \F_p$. By the Weyl bound (Lemma \ref{lem:weyl-bound}) and our rank assumption, this inner sum is at most $|H|p^{-R/2}$, unless $\gamma_0 + \gamma_1 + \gamma_2 = 0$. We thus see that \eqref{eq:orthogonality-brauer-count} is within $O\brac{|A||H|p^{3d-R/2}}$ of
\begin{equation}\label{eq:orth-brauer-2}
 \sum_{\gamma_1,\gamma_2\in \F_p^d }\ \sum_{y\in A} \sum_{x \in H} e_p\brac{ \gamma_1\sqbrac{Q(x+y)-Q(x)}+\gamma_2  \sqbrac{Q(x+2y)-Q(x)}}.
\end{equation}

Let $B$ denote the $d$-tuple of symmetric bilinear forms associated to $Q_0$ and let $L$ denote the homogeneous linear part of $Q$. Expanding the quadratic $Q_0(x+y) = Q_0(x) + 2B(x,y) + Q_0(y)$, hence for $y \in Q_0^{-1}(0)$ we have
\begin{equation*}
\gamma_1\sqbrac{Q(x+y)-Q(x)}+\gamma_2  \sqbrac{Q(x+2y)-Q(x)}\\
= (\gamma_1+2\gamma_2)\sqbrac{2B(x, y)+Ly}.
\end{equation*}
Thus \eqref{eq:orth-brauer-2} becomes
\begin{equation*}
 \sum_{\gamma_1,\gamma_2\in \F_p^d }\ \sum_{y\in A} \sum_{x \in H} e_p\brac{ (\gamma_1+2\gamma_2)\sqbrac{2B(x, y)+Ly}}.
 \end{equation*}
 Notice that if $x \mapsto (\gamma_1+2\gamma_2)B(x, y)$ is not the zero linear form on $H$ , then 
 \[
 \sum_{x \in H} e_p\brac{ (\gamma_1+2\gamma_2)\sqbrac{2B(x, y)+Ly}} = 0.
\]
If $\gamma_1+2\gamma_2 \neq 0$ then our rank assumption means that the subspace of $y \in H$ for which $(\gamma_1+2\gamma_2) B(\cdot,y)$ is the zero map $H \to \F_p$ has dimension at most $\dim H-R$. Thus
\begin{equation*}
 \sum_{\gamma_1,\gamma_2\in \F_p^d }\ \sum_{y\in A} \sum_{x \in H} e_p\brac{ (\gamma_1+2\gamma_2)\sqbrac{B(x, y)+Ly}} = p^d |A||H| + O(p^{2d}|H|^2p^{-R}).
 \end{equation*}
The result follows.
\end{proof}

\begin{proof}[Proof of Theorem \ref{thm:brauer-inverse}]
We give the argument for $f_2$, the remaining cases being similar. By Corollary \ref{cor:quad-level-size} and Lemma \ref{lem:counting-brauer}, the right-hand side of \eqref{eq:brauer-non-unif} is $\gg \eta |H|^2p^{-3d}$, or else $R \ll d$. Using this and Cauchy-Schwarz, we have
\begin{multline*}
 \eta |H|^2 p^{-3d} \ll \Bigabs{\sum_{x,y} f_0(x)f_1(x+y)f_2(x+2y)g(y)} \\
 \leq \Bigbrac{\sum_y |g(y)|^2}^{1/2} \Bigbrac{\sum_{y} \Bigabs{\sum_xf_0(x)f_1(x+y)f_2(x+2y)}^2}^{1/2}.
\end{multline*}
Since the support of $g$ lies in $Q_0^{-1}(0)$, we can use Corollary \ref{cor:quad-level-size} to give the estimate
$
\sum_y |g(y)|^2 \ll |H|p^{-d},
$
or else $R \ll d$. Therefore
\begin{multline*}
 \eta^2 |H|^3 p^{-5d} \ll \sum_{y} \Bigabs{\sum_xf_0(x)f_1(x+y)f_2(x+2y)}^2\\
 = \sum_h \sum_{x,y} \Delta_h f_0(x) \Delta_hf_1(x+y)\Delta_hf_2(x+2y)\\
 \leq \Bigbrac{\sum_{h,x} |\Delta_h f_0(x)|^2}^{1/2}\Bigbrac{\sum_{h,x}\Bigabs{\sum_y \Delta_hf_1(x+y) \Delta_h f_2(x+2y)}^2}^{1/2}.
\end{multline*}

As before, since $\supp(f_0) \subset Q^{-1}(0)$, we have $R\ll d$ or 
$$
\sum_{h,x} |\Delta_h f_0(x)|^2 \leq \sum_{h,x} 1_{Q^{-1}(0)}(x) 1_{Q^{-1}(0)}(x+h) = |Q^{-1}(0)|^2 \ll |H|^2p^{-2d}.
$$
Hence
\begin{multline}\label{eq:almost-final-u3-bound}
 \eta^4 |H|^4 p^{-8d} \ll  \sum_{h,x}\Bigabs{\sum_y \Delta_hf_1(x+y) \Delta_h f_2(x+2y)}^2
 = \sum_{h_1, h_2,x,y} \Delta_{h_1,h_2}f_1(x) \Delta_{h_1,2h_2} f_2(y)\\
 \ll \Bigbrac{\sum_{h_1, h_2}\Bigabs{\sum_x\Delta_{h_1,h_2}f_1(x)}^2}^{1/2}\Bigbrac{\sum_{h_1, h_2}\Bigabs{\sum_y \Delta_{h_1,2h_2} f_2(y)}^2}^{1/2}
 = \norm{f_1}_{U^3}^4\norm{f_2}_{U^3}^4.
\end{multline}

We have 
$$
\norm{f_1}_{U^3}^8\leq \sum_{h_1, h_2, h_3} \sum_x\Delta_{h_1, h_2, h_3} 1_{Q^{-1}(0)}(x).
$$
By Lemma \ref{lem:struct-diff}, the difference function
$
 \Delta_{h_1, h_2, h_3} 1_{Q^{-1}(0)} $ equals the indicator function of the set 
 $$
 Q^{-1}(0)\cap (V_{h_1, h_2, h_3} + x_0),
 $$
where $x_0$ is any fixed element of the support, and
 $$
 V_{h_1, h_2, h_3} = \set{x\in H : B(h_1, x) = B(h_2, x) = B(h_3, x) = 0}.
 $$
 
 From Corollary \ref{cor:pseud-thrice-diff}, for all but $ |H|^3p^{3d - R}$ triples $(h_1, h_2, h_3)$ we have
  $$
 |Q^{-1}(0)\cap (V_{h_1, h_2, h_3} + x_0)| = |H|p^{-4d} +O(|H|p^{-R/2}).
 $$
Hence $R\ll d$ or $
 \norm{f_1}_{U^3}^8\leq \normnorm{1_{Q^{-1}(0)}}_{U^3}^8\asymp |H|^4p^{-4d}$. Incorporating this into \eqref{eq:almost-final-u3-bound} gives $
  \eta \norm{1_{Q^{-1}(0)}}_{U^3} \ll \norm{f_2}_{U^3}$, as required.
\end{proof}

\section{Partitioning into high-rank quadratic level sets}\label{sec:part-high}
Notice that in our $U^3$-inverse theorem (Theorem \ref{thm:u3-inverse}), we input a high-rank $d$-tuple of quadratics $Q= (q_1, \dots, q_d)$ and output a $(d+1)$-tuple $(q_1, \dots, q_d, q)$. However, it is not guaranteed  that this new $(d+1)$-tuple is also high-rank. In order to ensure this, it may be necessary to modify the $(d+1)$-tuple and partition the space $H$ into further affine subspaces. The purpose of this section is to demonstrate such a procedure.

\begin{definition}[Maximal level set density]\label{def:max-level-density}
Let $H\leq \F_p^n$ with $p$ an odd prime and let $Q = (q_1, \dots, q_d)$ be homogeneous quadratic forms on $H$. For $A \subset \F_p^n$, define
\begin{equation}\label{eq:max-level-density}
\delta_{Q}(A) := \max_{x,L,a}\frac{|(A-x) \cap (Q+L)^{-1}(a)|}{| (Q+L)^{-1}(a)|},
\end{equation}
where the maximum is taken over all $x \in \F_p^n$, $a \in \F_p^d$ and $d$-tuples $L = (\ell_1, \dots, \ell_d)$ of linear forms on $H$. We emphasise that $(Q+L)^{-1}(a)$ is always a subset of $H$, even if $A-x$ is not, so that the quadratic forms are always accompanied by an implicit subspace $H$.
\end{definition}
\begin{lemma}[Maximal level set density is preserved on increasing complexity]\label{lem:density-preservation}
Let $Q = (q_1, \dots, q_d)$ be a $d$-tuple of homogeneous quadratic forms all defined on the same subspace $H \leq \F_p^n$. Given an additional quadratic form $q : H \to \F_p$, any $A \subset \F_p^n$ satisfies
$$
\delta_{Q,q}(A) \geq \delta_{Q}(A).
$$
\end{lemma}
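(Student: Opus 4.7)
The plan is to give a short pigeonhole argument: any maximiser of $\delta_{Q}(A)$ gives rise (by refining into $q$-fibres) to a configuration which is feasible in the definition of $\delta_{Q,q}(A)$ and which attains at least the same density.

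More precisely, take a triple $(x_0,L_0,a_0)$ attaining the maximum in the definition of $\delta_Q(A)$, so that $(Q+L_0)^{-1}(a_0) \subseteq H$ is non-empty. Since $q : H \to \F_p$, the level set decomposes as a disjoint union
\[
(Q+L_0)^{-1}(a_0) = \bigsqcup_{b \in \F_p} (Q+L_0,\, q)^{-1}(a_0,\,b),
\]
where we have taken the extra linear form to be $\ell = 0$. First I would record the elementary weighted pigeonhole principle: if a finite non-empty set $X$ is a disjoint union $\bigsqcup_b Y_b$ and $S \subseteq X$, then
\[
\frac{|S|}{|X|} \leq \max_{b :\, Y_b \neq \emptyset} \frac{|S \cap Y_b|}{|Y_b|},
\]
which follows immediately from $\sum_b |S \cap Y_b| = |S|$ and $\sum_b |Y_b| = |X|$.

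Applying this with $X = (Q+L_0)^{-1}(a_0)$, $Y_b = (Q+L_0, q)^{-1}(a_0, b)$ and $S = (A-x_0) \cap X$ yields some $b_0 \in \F_p$ with $Y_{b_0} \neq \emptyset$ such that
\[
\frac{|(A-x_0) \cap (Q+L_0,\, q)^{-1}(a_0, b_0)|}{|(Q+L_0,\, q)^{-1}(a_0, b_0)|} \;\geq\; \frac{|(A-x_0) \cap (Q+L_0)^{-1}(a_0)|}{|(Q+L_0)^{-1}(a_0)|} \;=\; \delta_Q(A).
\]
The triple $(x_0, (L_0, 0), (a_0, b_0))$ is an admissible choice in the maximum defining $\delta_{Q,q}(A)$, so $\delta_{Q,q}(A) \geq \delta_Q(A)$.

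The only thing to watch for is the edge case in which $(Q+L_0)^{-1}(a_0)$ is empty; but then the density in \eqref{eq:max-level-density} would not be defined at $(x_0,L_0,a_0)$, so the maximum is implicitly taken over triples for which the denominator is non-zero, and the argument above is unaffected. Accordingly there is no real obstacle: the lemma is a one-line pigeonhole once the definitions are unpacked.
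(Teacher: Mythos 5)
Your proof is correct and follows essentially the same route as the paper: fix a maximiser for $\delta_Q(A)$, partition the relevant level set into $q$-fibres, and apply the weighted pigeonhole to find a fibre attaining at least the same density, which is then a feasible choice (with extra linear form $0$) in the definition of $\delta_{Q,q}(A)$. Your version is slightly more explicit about stating the pigeonhole lemma and the admissibility of the resulting triple, but the argument is the same.
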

\begin{proof}
Let $x,L, a$ attain the maximum in \eqref{eq:max-level-density}. Partitioning according the level sets of $q$, we have that
$$
\sum_{b \in \F_p} |(A-x) \cap (Q+L)^{-1}(a) \cap q^{-1}(b)| = \delta_Q(A)  \sum_{b \in \F_p} | (Q+L)^{-1}(a) \cap q^{-1}(b)|.
$$
In particular, there exists $b$ such that
\[
\delta_{Q,q}(A) \geq \frac{|(A-x) \cap (Q+L)^{-1}(a) \cap q^{-1}(b)|}{| (Q+L)^{-1}(a) \cap q^{-1}(b)|}\geq \delta_Q(A). \qedhere
\]
\end{proof}
\begin{lemma}[Partitioning into high-rank level sets]\label{lem:high-rank-partitioning}
Let $Q = (q_1, \dots, q_d)$ be a $d$-tuple  of homogeneous quadratic forms all defined on the same subspace $H \leq \F_p^n$. Given $R\geq 0$ and $A_1, \dots, A_r\subset \F_p^n$, there exists a subspace $\tilde H \leq H$ and homogeneous quadratic forms $\tilde Q = (\tilde q_1, \dots, \tilde q_{\tilde d})$ on $\tilde H$ such that:
\begin{itemize}
\item $\delta_{ \tilde Q}(A_i) \geq \delta_Q(A_i)$ for all $1\leq i\leq r$;
\item the rank of $\tilde Q$ on $\tilde H$ is at least $R$;
\item $\tilde H$ has codimension in $H$ at most $(R+r-1)d$;
\item $\tilde d \leq d$ (in fact, each $\tilde q_i$ coincides with the restriction of some $q_j$ to $\tilde H$).
\end{itemize}
\end{lemma}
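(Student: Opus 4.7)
The plan is to induct on $d$, the number of quadratic forms. In the base case $d=0$ take $\tilde H = H$ and $\tilde Q$ empty, and in the inductive step, if $Q$ already has rank at least $R$ on $H$ simply take $\tilde H = H$ and $\tilde Q = Q$. So assume the rank is less than $R$ and choose a nonzero $\lambda \in \F_p^d$ for which $\lambda \cdot Q$ has rank $<R$; after relabeling assume $\lambda_d \neq 0$. Let $B = (b_1, \ldots, b_d)$ be the tuple of symmetric bilinear forms associated to $Q$ and let $V := \{v \in H : (\lambda \cdot B)(v, \cdot) = 0\}$ be the radical of $\lambda \cdot B$. By rank-nullity $V$ has codimension at most $R-1$ in $H$, and because $Q$ is homogeneous, $\lambda \cdot Q$ vanishes identically on $V$, so $q_d|_V$ lies in the $\F_p$-span of $q_1|_V, \ldots, q_{d-1}|_V$.

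The aim is to reduce to the $(d-1)$-tuple $Q' := (q_1|_U, \ldots, q_{d-1}|_U)$ on a suitable subspace $U \leq V$ chosen to preserve maximal level-set density. For each $i$ with $\delta_Q(A_i) =: \alpha_i > 0$, fix $(x_0^{(i)}, L_0^{(i)}, a_0^{(i)})$ attaining the maximum and define the linear form $\mu^{(i)} := (L_0^{(i)} \cdot \lambda)|_V$ on $V$. Set $U := \bigcap_{i} \ker \mu^{(i)} \leq V$, a subspace of $V$ of codimension at most $r$, hence of codimension at most $R + r - 1$ in $H$.

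I then plan to show $\delta_{Q'}(A_i) \geq \alpha_i$ for every $i$. First partition the level set $E^{(i)} = (Q + L_0^{(i)})^{-1}(a_0^{(i)}) \subset H$ along cosets of $V$ in $H$ and use a weighted-averaging argument to locate a coset $V + t^{(i)}$ on which the density of $A_i - x_0^{(i)}$ remains at least $\alpha_i$. Transport the relevant slice back to $V$ via the shift $v \mapsto v + t^{(i)}$, expanding each $q_j(v+t^{(i)}) = q_j|_V(v) + 2b_j(v, t^{(i)}) + q_j(t^{(i)})$ by homogeneity, and use the linear relation $q_d|_V \in \mathrm{span}(q_1|_V, \ldots, q_{d-1}|_V)$ to eliminate the $q_d$-condition in favour of a single affine constraint on $V$. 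The crucial observation is that this constraint is exactly $\mu^{(i)}(v) = C_i$ for some constant $C_i$: the bilinear cross-terms $(\lambda \cdot B)(v, t^{(i)})$ vanish because $v \in V$ lies in the radical, so the constraint depends only on $L_0^{(i)}$ and $\lambda$ and not on the coset representative $t^{(i)}$. A second round of averaging over cosets of $U$ inside the affine hyperplane $\{\mu^{(i)} = C_i\} \subset V$ produces a coset $U + s^{(i)}$ on which density is still at least $\alpha_i$, and shifting by $-s^{(i)}$ and re-expanding rewrites the remaining slice as a bona fide level set $(Q' + \hat L^{(i)})^{-1}(\hat a^{(i)})$ on $U$, witnessing $\delta_{Q'}(A_i) \geq \alpha_i$.

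Finally, apply the inductive hypothesis to the $(d-1)$-tuple $Q'$ on $U$: this supplies $\tilde H \leq U$ and a sub-tuple $\tilde Q$ of restrictions of $q_1, \ldots, q_{d-1}$ obeying the conclusion for $d-1$, with $\codim_U \tilde H \leq (R+r-1)(d-1)$. Hence $\codim_H \tilde H \leq (R-1) + r + (R+r-1)(d-1) = (R+r-1)d$, as required, and each entry of $\tilde Q$ remains the restriction of some $q_j$ since restrictions compose. The main obstacle is ensuring that one subspace $U$ works simultaneously for all $A_i$: without the observation that $\mu^{(i)}$ is independent of $t^{(i)}$ (which uses both the homogeneity of $Q$ and the definition of $V$ as the radical of $\lambda \cdot B$), the extra linear constraint produced by eliminating $q_d$ would vary with the coset representative, and the intersection defining $U$ would have to absorb terms depending on those choices, destroying the desired codimension bound.
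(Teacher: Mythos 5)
Your proof is correct and takes essentially the same route as the paper: inducting on $d$, passing to the radical $V$ of a low-rank combination $\lambda\cdot B$, using the vanishing of $\lambda\cdot Q$ on $V$ to trade the $q_d$-condition for a single affine-linear constraint, and averaging in two stages (first over cosets of $V$ in $H$, then over cosets of $U$ inside the affine slice) to preserve maximal level-set density before invoking the inductive hypothesis. Your observation that the constraining linear form $\mu^{(i)}$ is independent of the coset translate $t^{(i)}$ (because the cross-term $(\lambda\cdot B)(v,t^{(i)})$ vanishes when $v$ lies in the radical) is correct and lets you define $U$ before choosing the translates, but it is not strictly necessary for the codimension bound: the paper avoids the issue simply by fixing each $t_i$ first and only then defining the linear form $\ell_i$ and the subspace $\hat K$, so the count of at most $r$ extra linear constraints works either way.
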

\begin{proof}
We proceed by induction on $d \geq 0$. The case $d= 0$ holds since the rank of the empty tuple of quadratic forms is (vacuously) at least $R$. Let us therefore suppose that $d > 0$. If the rank of $(q_1, \dots, q_d)$ on $H$ is at least $R$, then the conclusion of the lemma holds with $\tilde H =H$ and $\tilde Q = Q$. We may therefore assume that there exist $\lambda_1,\dots, \lambda_d\in \F_p$ (not all zero) such that the rank of $\lambda_1q_1+\dots + \lambda_d q_d$ on $H$ is at most $R-1$. Re-labelling indices we may assume that $\lambda_d \neq 0$; in fact (dividing through by $-\lambda_d$) we may assume that $\lambda_d = -1$. Hence there exists a subspace $K \leq H$ of codimension strictly less than $R$ such that on $K$ we have $q_d = \lambda_1q_1 + \dots + \lambda_{d-1}q_{d-1}$.

For each $A_i$, let $x_i, L_i, a_i$ satisfy
$$
\delta_Q(A_i) = \frac{|(A_i-x_i) \cap (Q+L_i)^{-1}(a_i)|}{|  (Q+L_i)^{-1}(a_i)|}.
$$
Partitioning $H$ into cosets of $K$, we see that for each $A_i$ there exists a translate $t_i$ such that
$$
\frac{|(A_i-x_i)\cap (K+t_i) \cap (Q+L_i)^{-1}(a_i)|}{| (K+t_i) \cap (Q+L_i)^{-1}(a_i)|} \geq \delta_Q(A_i).
$$
Notice that there exist $\tilde L_i,\tilde a_i$ such that
$$
(K+t_i) \cap (Q+L_i)^{-1}(a_i) = \sqbrac{K\cap (Q+\tilde L_i)^{-1}(\tilde a_i)} + t_i.
$$
Hence there exists $\tilde x_i$ such that
$$
\frac{|(A_i-\tilde x_i)\cap K \cap (Q+\tilde L_i)^{-1}(\tilde a_i)|}{| K \cap (Q+\tilde L_i)^{-1}(\tilde a_i)|} \geq \delta_Q(A_i).
$$

Given a $d$-tuple $a = (a_1, \dots, a_d)$ write $a':= (a_1, \dots, a_{d-1})$. Since $q_d = \lambda_1q_1 + \dots + \lambda_{d-1}q_{d-1}$ on $K$, one may check that for each $i$ there exists a linear form $\ell_i : K \to \F_p$ and $b_i \in \F_p$ such that
$$
K \cap (Q+\tilde L_i)^{-1}(\tilde a_i) = K \cap (Q'+\tilde L_i')^{-1}(\tilde a_i')\cap \ell_i^{-1}( b_i).
$$
Notice that 
$$
K \cap \ell_i^{-1}(b_i) = K \cap (\ker \ell_i + s_i)
$$
for some $s_i \in K$. Hence
$$
K \cap (Q+\tilde L_i)^{-1}(\tilde a_i) = K\cap (\ker \ell_i + s_i) \cap (Q'+\tilde L_i')^{-1}(\tilde a_i').
$$

Let $\hat K$ denote the subspace
$$
K \cap \ker \ell_1 \cap \dots \cap \ker \ell_r,
$$
so that $\hat K$ has codimension at most $r$ within $K$, hence codimension at most $r + R-1$ within $H$. Partitioning each $K\cap (\ker \ell_i + s_i)$ into cosets of $\hat K$, we see that for each $A_i$ there exists a translate $y_i$ such that
$$
\frac{|(A_i-\tilde x_i)\cap (\hat K + y_i) \cap  (Q'+\tilde L_i')^{-1}(\tilde a_i')|}{| (\hat K + y_i) \cap  (Q'+\tilde L_i')^{-1}(\tilde a_i')|} \geq \delta_Q(A_i).
$$
Re-arranging once more, there exist $\hat x_i,\hat L_i,\hat a_i$ such that
$$
\frac{|(A_i-\hat x_i)\cap \hat K \cap  (Q'+\hat L_i)^{-1}(\hat a_i)|}{| \hat K  \cap  (Q'+\hat L_i)^{-1}(\hat a_i)|} \geq \delta_Q(A_i).
$$
Hence on writing $\hat Q$ for the restriction of $Q'$ to $\hat K$, we deduce that
$$
\delta_{\hat Q}(A_i) \geq \delta_Q(A_i) \qquad (1 \leq i \leq r).
$$

Since $\hat Q$ is a $(d-1)$-tuple of quadratic forms, we may apply the induction hypothesis to yield the claimed result.
\end{proof}
\section{Proving sparsity-expansion via density increment}\label{sec:sparsity-expansion-density-increment}
The aim of this section is to prove the sparsity-expansion dichotomy for Brauer quadruples (Lemma \ref{lem:brauer-sp-exp-exp}). 
We proceed by an iterative procedure.
\begin{iteration}\label{it:only-iteration}
Suppose that at stage $m$ of our iteration we have a subspace  $H = H^{(m)} \leq \F_p^n$ and homogeneous quadratic forms $ Q= Q^{(m)} = (q_1, \dots, q_d) : H \to \F_p^d$ where:
\begin{itemize}
\item $d \leq m$.
\item the rank of $Q$ (as a $d$-tuple) on $H$ is at least $R$.
\item $\codim_{\F_p^n}\brac{H} \leq (R+r-1)m(m+1)/2$.
\item Writing $\delta_Q$ for the maximal level set density (as in Definition \ref{def:max-level-density}), we have 
$$
\sum_{i=1}^r\delta_Q(A_i) \gg_p m(\alpha\beta)^{O_p(1)}.
$$
\end{itemize}
\end{iteration}
We initiate this iteration on taking $H^{(0)} := \F_p^n$ and taking $Q^{(0)}$ to be the empty tuple.  

Suppose that our iteration has reached stage $m$. Given $H = H^{(m)}$ and $Q = Q^{(m)}$ as in Iteration \ref{it:only-iteration}, we classify each $A_i$ according to the following:
\begin{itemize}
\item  (Sparse sets).  $A_i$ is \emph{sparse} if the maximal level set density (Definition \ref{def:max-level-density}) satisfies
\begin{equation}\label{eq:iteration-sparse}
\delta_Q(A_i) < \alpha;
\end{equation}
\item  (Dense expanding sets).  $A_i$ is \emph{dense expanding} if $\delta_Q(A_i) \geqslant \alpha$ and we have the expansion estimate
\begin{equation}\label{eq:iteration-expansion}
\abs{\set{y \in H\cap Q^{-1}(0): \exists x, x+y, x+2y \in A_i}} > \brac{1 - \beta} \abs{H\cap Q^{-1}(0)};
\end{equation}
\item  (Dense non-expanding sets).  $A_i$ is \emph{dense non-expanding} if it is neither sparse as in \eqref{eq:iteration-sparse} nor dense expanding as in \eqref{eq:iteration-expansion}, so that
\begin{equation}\label{eq:dense-non-exp}
\delta_Q(A_i) \geq \alpha \quad \text{and}\quad  \frac{\abs{\set{y \in H\cap Q^{-1}(0): \exists x, x+y, x+2y \in A_i}}}{\abs{H\cap Q^{-1}(0)}} \leq \brac{1 - \beta} .
\end{equation}
\end{itemize}
If there are no dense non-expanding $A_i$, then the dichotomy claimed in Lemma \ref{lem:brauer-sp-exp-exp} is satisfied, and we terminate our iteration.  Let us show how the existence of a dense non-expanding $A_i$ allows the iteration to continue.

By the definition of maximal level set density (Definition \ref{def:max-level-density}), there exists $t,L,a$ such that
$$
|(A_i-t)\cap (Q+L)^{-1}(a)| = \delta_Q(A_i) |(Q+L)^{-1}(a)|.
$$ 
We define dense subsets $A \subset (Q+L)^{-1}(a)$ and $B\subset  Q^{-1}(0)$ by taking:
\begin{equation}\label{schur A B defn}
\begin{split}
A & := (A_i - t) \cap  (Q+L)^{-1}(a);\\ 
 B & := \set{y \in Q^{-1}(0) : \nexists x, x+y, x+2y \in A_i}. 
 \end{split}
\end{equation}
Our dense non-expanding assumption implies that $|A| \geq  \alpha |(Q+L)^{-1}(a)|$ and that $ |B| \geqslant \beta| Q^{-1}(0)|$.  Moreover, it follows from our construction \eqref{schur A B defn} that 
\begin{equation}\label{eq:small-conv-inner-prod}
\sum_{x, y} 1_A(x) 1_A(x+y)1_A(x+2y) 1_B(y) = 0.
\end{equation}
We compare this with the count which replaces each occurrence of $1_A$ with $\delta 1_{(Q+L)^{-1}(a)}$, where $\delta := \delta_Q(A_i)$. By a standard telescoping argument, there exist functions $f_0, f_1, f_2 : H \to [-1,1]$ with support contained in $(Q+L)^{-1}(a)$, at least one of which is equal to $1_A - \delta 1_{(Q+L)^{-1}(a)}$, and such that
\begin{multline}\label{eq:large-count}
\Bigabs{\sum_{x, y} f_0(x) f_1(x+y)f_2(x+2y) 1_B(y)} \gg\\ \delta^3\sum_{x, y} 1_{(Q+L)^{-1}(a)}(x) 1_{(Q+L)^{-1}(a)}(x+y)1_{(Q+L)^{-1}(a)}(x+2y) 1_B(y)
\end{multline}

By Lemma \ref{lem:counting-brauer} and Corollary \ref{cor:quad-level-size}, the right-hand side of \eqref{eq:large-count} is $\gg \delta^3\beta |H|^2p^{-3d}$, or else $R \ll d + \log(2/\beta)$. Hence we may assume that 
\begin{multline*}
\Bigabs{\sum_{x, y} f_0(x) f_1(x+y)f_2(x+2y) 1_B(y)} \gg \alpha^3\beta |H|^2p^{-3d}\\
\gg \alpha^3\beta\sum_{x, y} 1_{(Q+L)^{-1}(a)}(x) 1_{(Q+L)^{-1}(a)}(x+y)1_{(Q+L)^{-1}(a)}(x+2y) 1_{Q^{-1}(0)}(y) .
\end{multline*}
By $U^3$-control of the Brauer counting operator (Lemma \ref{thm:brauer-inverse}), we deduce that either $R \ll d + \log(2/\beta)$ or else 
$$
\normnorm{1_A - \delta 1_{(Q+L)^{-1}(a)}}_{U^3} \gg \alpha^3\beta \norm{1_{(Q+L)^{-1}(a)}}_{U^3}.
$$

Applying the $U^3$-inverse theorem (Theorem \ref{thm:u3-inverse}), we  see that either $R \ll_p d + \log(2/\alpha\beta)$ or there exists a quadratic form $q$ and linear form $\ell$ such that 
\begin{equation*}
\Bigabs{\sum_{x\in (Q+L)^{-1}(a)} \sqbrac{1_A(x) -  \delta}e_p\sqbrac{q(x)+\ell(x)}}
\\ \gg_p (\alpha \beta)^{O_p(1)} | (Q+L)^{-1}(a)|.
\end{equation*}
Partitioning into the level sets of $q+l$ and using the triangle inequality gives
\begin{equation*}
\sum_{b \in \F_p}\Bigabs{\sum_{x\in(Q+L)^{-1}(a)\cap (q+l)^{-1}(b)} \sqbrac{1_A(x) -  \delta}}
\\ \gg_p (\alpha \beta)^{O_p(1)} \sum_{b \in \F_p}|(Q+L)^{-1}(a)\cap (q+l)^{-1}(b)|.
\end{equation*}
Since $1_A - \delta$ has mean zero on $(Q+L)^{-1}(a)$, we have that 
\begin{equation*}
\sum_{b \in \F_p}\ \sum_{x\in(Q+L)^{-1}(a)\cap (q+l)^{-1}(b)} \sqbrac{1_A(x) -  \delta} = 0.
\end{equation*}
Adding this to our inequality gives
\begin{equation*}
\sum_{b \in \F_p} \Bigbrac{\ \sum_{x\in(Q+L)^{-1}(a)\cap (q+l)^{-1}(b)} \sqbrac{1_A(x) -  \delta}}_+
\\ \gg_p (\alpha \beta)^{O_p(1)} \sum_{b \in \F_p}|(Q+L)^{-1}(a)\cap (q+l)^{-1}(b)|,
\end{equation*}
where $x_+ := \max\set{x, 0} = \trecip{2}(x + |x|)$ denotes the positive part of a real number $x$. Hence by the pigeon-hole principle, there exists $b\in \F_p$ with 
$$
\frac{|A\cap (Q+L)^{-1}(a)\cap (q+l)^{-1}(b)|}{|(Q+L)^{-1}(a)\cap (q+l)^{-1}(b)|} \geq \delta + \Omega_p\brac{\alpha\beta}^{O_p(1)} .
$$
Recalling that $\delta = \delta_Q(A_i)$ and that for some $t$ we have
$$
A= (A_i-t)\cap (Q+L)^{-1}(a),
$$
we deduce that
$$
\delta_{Q,q}(A_i) \geq \delta_Q(A_i) + \Omega_p\brac{\alpha\beta}^{O_p(1)}.
$$
In other words, one of our sets $A_i$ has maximal level set density which increments on adding the additional quadratic form $q$ to $Q$.

Since maximal level set density does not decrease on increasing the number of quadratic forms (by Lemma \ref{lem:density-preservation}), the sum of the maximal level set densities also increments:
$$
\sum_{j=1}^r \delta_{Q,q}(A_j) \geq \Omega_p\brac{\alpha\beta}^{O_p(1)} +\sum_{j=1}^r \delta_{Q}(A_j)
$$

By Lemma \ref{lem:high-rank-partitioning}, there exists a subspace $\tilde H \leq H$ and quadratic forms $\tilde Q = (\tilde q_1, \dots, \tilde q_{\tilde d})$ on $\tilde H$ such that:
\begin{itemize}
\item $\delta_{ \tilde Q}(A_j) \geq \delta_{Q,q}(A_j)$ for all $1\leq j\leq r$;
\item the rank of $\tilde Q$ (as a $d$-tuple) on $\tilde H$ is at least $R$;
\item $\tilde H$ has codimension in $H$ at most $(R+r-1)(d+1)$;
\item $\tilde d \leq d+1$.
\end{itemize}

We have therefore established that, given the data in Iteration \ref{it:only-iteration}, at least one of three possibilities occurs: there are no dense non-expanding sets $A_i$ (those sets satisfying \eqref{eq:dense-non-exp}); or the rank is too small $R \ll_p m + \log(2/\alpha\beta)$; or the assumptions of the iteration scheme (Iteration \ref{it:only-iteration}) remain valid with $m$ replaced by $m+1$. 

Since densities are always at most 1, at every stage of Iteration \ref{it:only-iteration} we have
$$
m \brac{\alpha\beta}^{O_p(1)} \ll_p \sum_{j=1}^r \delta_Q(A_j) \leq r.
$$
Hence the iteration must terminate at some $m \ll_p r\brac{\alpha\beta}^{-O_p(1)}$. Taking our rank $R$ to be of the form
$$
R := C_pr\brac{\alpha\beta}^{-C_p}
$$
for some sufficiently large absolute constant $C_p$, we see that the termination must have occurred because there are no dense non-expanding sets $A_i$.  Incorporating this value of $m$ and $R$ into the assumptions of Iteration \ref{it:only-iteration}, we deduce that we have the conclusion of the sparsity-expansion dichotomy (Lemma \ref{lem:brauer-sp-exp-exp}) with  $d \ll_p r\brac{\alpha\beta}^{-O_p(1)}$ and  $\codim_{\F_p^n}(H) \ll_p (r/\alpha\beta)^{O_p(1)}$. This completes the proof of Lemma \ref{lem:brauer-sp-exp-exp}.

\appendix

\section{Density bounds for  configurations of complexity two}\label{sec:density}
The purpose of this appendix is to demonstrate how one can avoid running density increment arguments with respect to quadratic level sets if one wishes to obtain polylogarithmic bounds on the density of sets lacking certain translation-invariant configurations of complexity two. In particular, we prove Theorem \ref{thm:poly-log-density}. Our argument combines the polynomial method of Croot-Lev-Pach with  a weak quadratic regularity lemma of Green-Tao. 

\subsection{Proof of Theorem \ref{thm:poly-log-density}}
Let us first state the required application of the polynomial method.

\begin{lemma}[Polynomial method]\label{lem:poly}
Let $C_0, C_1, C_2, C_3 \in \F_p\setminus\set{0}$ with $C_0 + C_1 + C_2 + C_3 = 0$. Then for any function $f : \F_p^n \to [0,1]$ with $\sum_{x\in \F_p^n} f(x) \geq \delta$, we have
$$
\sum_{C_0x_1 + C_1x_2 + C_3x_3 + C_4x_4 = 0} f(x_0)f(x_1)f(x_2)f(x_3) \gg_p \delta^{O_p(1)} p^{3n}.
$$
\end{lemma}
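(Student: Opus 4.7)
The plan is to use translation-invariance to reduce the $4$-variable count to a supersaturation bound for $3$-term configurations, and then invoke the Croot-Lev-Pach polynomial method in that setting. After the bijective change of variables $y_i := C_i x_i$ the equation becomes $y_0+y_1+y_2+y_3=0$ and the count becomes
\[
N=\sum_{y_0+y_1+y_2+y_3=0}g_0(y_0)g_1(y_1)g_2(y_2)g_3(y_3),
\]
where $g_i(y):=f(y/C_i)$ is $[0,1]$-valued and has the same total mass as $f$ (which we interpret as being of order $\delta p^n$, i.e., mean at least $\delta$).

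For $p\geq 5$, fix $y_0$ and reparametrize $y_i=z_i-y_0/3$ for $i=1,2,3$, using that $3$ is invertible in $\F_p$. The inner sum becomes the translation-invariant $3$-term count
\[
\sum_{z_1+z_2+z_3=0}\tilde g_1(z_1)\tilde g_2(z_2)\tilde g_3(z_3),
\]
where $\tilde g_i(z):=g_i(z-y_0/3)$ is a translate of $g_i$ and therefore retains the same mean. I then invoke a Croot-Lev-Pach-style $3$-term supersaturation: for $[0,1]$-valued functions $h_1,h_2,h_3$ on $\F_p^n$ each of mean at least $\delta$, one has $\sum_{z_1+z_2+z_3=0}h_1(z_1)h_2(z_2)h_3(z_3)\gg_p\delta^{O_p(1)}p^{2n}$. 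Summing the resulting lower bound over $y_0$ weighted by $g_0(y_0)$ then yields $N\gg_p\delta^{O_p(1)}p^{3n}$. The excluded small case $p=3$ is handled separately: the only $4$-tuples of nonzero elements of $\F_3$ summing to zero are permutations of $(1,1,2,2)$, so the equation reduces to the Sidon equation $x_0+x_1=x_2+x_3$ and the count equals the additive energy $\sum_w(f*f)(w)^2\geq\bigbrac{\sum f}^4/p^n\geq\delta^4 p^{3n}$ directly by Cauchy-Schwarz.

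The main obstacle is the invoked $3$-term supersaturation, which is where the polynomial method contributes. The Ellenberg-Gijswijt cap set bound gives qualitative control, asserting that any subset of $\F_p^n$ of density above $(c_p/p)^n$ admits a non-trivial solution to $z_1+z_2+z_3=0$. Upgrading this to a genuinely polynomial-in-$\delta$ supersaturation bound for $[0,1]$-valued functions requires a careful probabilistic argument: one passes to the superlevel set $\set{f\geq\delta/2}$ and shows that too few configurations would allow an appropriately scaled random subsample to exceed the cap set threshold while remaining configuration-free, contradicting the Croot-Lev-Pach rank bound.
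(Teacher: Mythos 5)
Your $p=3$ case is correct and makes a nice observation. But the $p\geq 5$ case has a genuine gap in the key step, and I don't believe it can be fixed without abandoning the reduction.

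The 3-term supersaturation you invoke — that for arbitrary $[0,1]$-valued $h_1,h_2,h_3$ on $\F_p^n$ each of mean at least $\delta$ one has $\sum_{z_1+z_2+z_3=0}h_1(z_1)h_2(z_2)h_3(z_3)\gg_p\delta^{O_p(1)}p^{2n}$ — is simply false for $p\geq 5$. Take $h_1=h_2=1_{\{x\,:\,x_1=0\}}$ and $h_3=1_{\{x\,:\,x_1=1\}}$, each of density $1/p$; then $z_1+z_2+z_3=0$ forces $0+0+1=0$ in $\F_p$, so the count is identically zero. The point is that the system $z_1+z_2+z_3=0$ is \emph{not} translation-invariant when $p\neq 3$: the diagonal $z_1=z_2=z_3=z$ satisfies it only when $3z=0$, i.e.\ $z=0$. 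Consequently there are no ``trivial'' solutions guaranteed to survive, and neither the Ellenberg--Gijswijt bound nor the Croot--Lev--Pach slice rank argument gives any lower bound on the count for a general dense triple — those arguments require that the restricted tensor be supported on a near-diagonal. (Your final paragraph's attempt to bootstrap via superlevel sets and random sampling would also fail, for the same reason: the contradiction you are hoping for requires the smaller set to still be forced to contain a solution, which requires translation-invariance.)

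The flaw is introduced precisely by fixing $y_0$. Before fixing, the 4-variable system $y_0+y_1+y_2+y_3=0$ with $y_i\in C_iA$ is translation-invariant in the sense that the ``dilation diagonal'' $y_i=C_ia$ is always a solution whenever $a\in A$ (using $\sum_iC_i=0$); this is exactly what the paper exploits. Once you condition on $y_0$ and substitute $z_i=y_i+y_0/3$, that diagonal is sheared into $z_i=a(C_i+C_0/3)$, which no longer lies on $z_1=z_2=z_3$, and no self-evidently ``trivial'' structure remains in the 3-variable count. The paper avoids the issue entirely: it keeps all four variables, applies the Fox--Lov\'asz--Sauermann arithmetic $4$-cycle removal lemma to $A_i:=C_iA$, and observes that a solution-free remainder $A'$ would be forced to be empty because each $x\in A'$ yields a diagonal quadruple $(x,x,x,x)$. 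That last step is exactly where $\sum_iC_i=0$ is used, and it has no analogue once $y_0$ is frozen. If you want to retain a 3-variable picture, you would have to keep careful track of which triples arise from a $y_0$ of the form $C_0a_0$ with $a_0\in A$ and rebuild the removal/contradiction argument around those, at which point you are essentially re-deriving the 4-variable proof. I'd recommend working directly with all four variables. (Also a small point: the lemma's hypothesis should read $\sum_xf(x)\geq\delta p^n$ — you correctly noticed and interpreted the typo.)
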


\begin{proof}
Let $A := \set{x \in \F_p^n : f(x) \geq \trecip{2}\delta }$, so that $|A|\geq \trecip{2} \delta p^n$. Define $A_i := \set{C_i x : x \in A}$. If the number of quadruples $(x_0, x_1, x_2, x_3) \in A_0 \times A_1 \times A_2 \times A_3$ such that $x_1 + x_2 + x_3 + x_4 = 0$ is at most $\eta p^{3n}$ where $0 < \eta < 1/2$, then by the arithmetic $4$-cycle removal lemma \cite[Theorem 1.3]{fox2018polynomial}, we can delete at most $\eta^{\Omega_p(1)} p^n$ elements from each $A_i$ so that no such quadruples remain. As a consequence, we can delete at most $4\eta^{\Omega_p(1)} p^n$ elements from $A$ to obtain a set $A'$ for which there are no quadruples $(x_0, x_1, x_2, x_3) \in (A')^4$ with 
\begin{equation}\label{eq:trans-inv}
C_0x_0 + C_1 x_1 + C_2x_2 + C_4 x_4 = 0.
\end{equation} It follows that $A'$ must be empty, for any element $x \in A'$ yields such a quadruple on taking $x_i = x$ for all $i$ (since the $C_i$ sum to zero). Thus we have a contradiction unless $\trecip{2}\delta - 4\eta^{\Omega_p(1)} \leq 0$. 

It follows that the number of solutions to \eqref{eq:trans-inv} with all $x_i \in A$ is $\gg_p \delta^{O_p(1)} p^{3n}$. Thus
\begin{multline*}
\sum_{C_0x_1 + C_1x_2 + C_3x_3 + C_4x_4 = 0} f(x_0)f(x_1)f(x_2)f(x_3)\\ \geq (\delta/2)^4 \sum_{C_0x_1 + C_1x_2 + C_3x_3 + C_4x_4 = 0} 1_A(x_0)1_A(x_1)1_A(x_2)1_A(x_3)\gg_p \delta^{O_p(1)} p^{3n}.\qedhere
\end{multline*}
\end{proof}

We also utilise a weak quadratic regularity lemma of Green and Tao \cite[Theorem 4.10]{GreenTaoNewIa}, which requires the following notation.
\begin{definition}[Projection onto quadratic level sets]
Let $H$ be a finite vector space over $\F_p$ and let $Q$ be a $d$-tuple of quadratic polynomials on $H$. Given $f : H \to \C$, define the projection $\Pi_Qf : H \to \C$ of $f$ onto the level sets of $Q$ by
$$
\Pi_Q f(x) = \sum_{a \in \F_p^n} 1_{ Q^{-1}(a)}(x) \E_{y \in Q^{-1}(a)} f(y).
$$
Let us also write $f_Q(a)$ for $\E_{y \in Q^{-1}(a)} f(y)$, so that $\Pi_Qf (x) = f_Q(a)$ when $x \in Q^{-1}(a)$. We set $f_Q(a) = 0$ if $Q^{-1}(a) = \emptyset$.
\end{definition}

\begin{theorem}[Weak regularity, \cite{GreenTaoNewIa} Theorem 4.10]\label{thm:weak-reg}
Let $A \subset \F_p^n$ be a set with density $\delta := |A|p^{-n}$. Let $0< \eta, \eps < 1/2$ and  $R\geq 1$. Then there exists a subspace $H \leq \F_p^n$ of codimension at most $(\eps\eta)^{-O_p(1)} R$, there exists a translate $t \in \F_p^n$ such that the density of $A-t$ on $H$ is at least $\delta -\eps$, and there exists a $d$-tuple $Q$ of quadratic polynomials on $H$ of rank at least $R$ such that $d \leq (\eps\eta)^{-O_p(1)}$ and
\begin{equation}\label{eq:weak-regularity}
\norm{1_{(A-t)\cap H} - \Pi_Q1_{(A-t)\cap H}}_{U^3} \leq \eta \norm{1_H}_{U^3}.
\end{equation} 
\end{theorem}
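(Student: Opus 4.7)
The proof is an energy-increment argument in the standard mould of weak quadratic regularity lemmas. At stage $m$ we maintain a subspace $H_m \leq \F_p^n$, a translate $t_m \in \F_p^n$ and a $d_m$-tuple $Q_m$ of quadratic polynomials on $H_m$ of rank at least $R$, writing $A_m := (A - t_m) \cap H_m$ and $f_m := 1_{A_m} - \Pi_{Q_m} 1_{A_m}$. The iteration is initialised at $m = 0$ with $H_0 = \F_p^n$, $t_0 = 0$ and $Q_0$ empty, and we terminate at the first stage for which $\norm{f_m}_{U^3} \leq \eta \norm{1_{H_m}}_{U^3}$; this is precisely the conclusion of the theorem.

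Suppose the iteration has not terminated. I would apply the \emph{global} $U^3$-inverse theorem -- namely the special case $d = 0$ of Theorem \ref{thm:u3-inverse}, in which the low-rank alternative is vacuous -- to the $1$-bounded function $f_m$ on $H_m$, producing a quadratic polynomial $q : H_m \to \F_p$ with
\[
\Bigabs{\sum_{x \in H_m} f_m(x) e_p\bigl(q(x)\bigr)} \gg_p \eta^{O_p(1)} |H_m|.
\]
Because $f_m$ has mean zero on every level set of $Q_m$, the character $e_p(q)$ may be replaced in this correlation by $e_p(q) - \Pi_{Q_m}[e_p(q)]$ without changing its value; a short Cauchy--Schwarz step then upgrades the estimate to an $L^2$-energy increment
\[
|H_m|^{-1}\bignorm{\Pi_{(Q_m, q)} 1_{A_m}}_2^2 \geq |H_m|^{-1}\bignorm{\Pi_{Q_m} 1_{A_m}}_2^2 + \Omega_p\bigl(\eta^{O_p(1)}\bigr).
\]
Since the normalised energy lies in $[0,1]$, at most $\eta^{-O_p(1)}$ such stages can occur, which will yield the final $d \leq (\eps\eta)^{-O_p(1)}$.

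The enlarged tuple $(Q_m, q)$ need not have rank at least $R$ on $H_m$, so I would apply Lemma \ref{lem:high-rank-partitioning} to $(H_m,(Q_m,q))$ with the single set $A - t_m$ (so $r = 1$). This returns a subspace $\tilde H \leq H_m$ of codimension at most $R(d_m + 1)$ in $H_m$ and a sub-tuple $\tilde Q$ of $(Q_m, q)|_{\tilde H}$ of rank at least $R$ with $\tilde d \leq d_m + 1$. The essential feature baked into the proof of that lemma is that the discarded forms are $\F_p$-linear combinations of $\tilde Q$ modulo affine terms, so on each coset of $\tilde H$ in $H_m$ the level-set partition of $(Q_m, q)$ coincides with that of $\tilde Q$. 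Averaging over cosets of $\tilde H$ in $H_m$, both the density of $A - t_m$ and the normalised energy against $\Pi_{(Q_m, q)}$ average to their values on $H_m$; a pigeon-hole on a suitable linear combination of these two quantities selects a coset on which both are preserved, up to additive losses allocated to be at most $\eps / (\text{number of stages})$. Recentring this coset defines $H_{m+1}$, $t_{m+1}$ and $Q_{m+1}$, and summing the per-stage density losses over $\eta^{-O_p(1)}$ stages bounds the cumulative density drop by $\eps$. The final codimension is $\sum_m R(d_m+1) \leq R(\eps\eta)^{-O_p(1)}$, as required.

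The main obstacle is the interaction between the energy-increment step and the rank-refinement step: one must ensure that the $\Omega_p(\eta^{O_p(1)})$ energy gained by adjoining $q$ is not destroyed when $(Q_m, q)$ is collapsed to the possibly smaller sub-tuple $\tilde Q$ on $\tilde H$. The key structural input is the observation above -- inherited from the proof of Lemma \ref{lem:high-rank-partitioning} -- that on each coset of $\tilde H$ the discarded quadratics are affine combinations of the retained ones, so the two level-set partitions agree there and the $L^2$-energy is merely \emph{redistributed} across cosets rather than lost; this is what makes the joint density/energy pigeon-hole feasible with only $\eps / (\text{number of stages})$ density loss per iteration.
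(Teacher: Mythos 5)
The paper does not prove Theorem \ref{thm:weak-reg}; it is quoted directly from Green and Tao \cite{GreenTaoNewIa} (Theorem 4.10 there), so there is no proof in this manuscript to compare against. Your energy-increment skeleton is the standard one, and the increment step itself is correct: $e_p(q)$ is measurable with respect to the level sets of $(Q_m, q)$, so the correlation equals $\sum_x \Pi_{(Q_m,q)} f_m(x)\, e_p(q(x))$, and Cauchy--Schwarz together with the Pythagoras identity $\|\Pi_{(Q_m,q)}1_{A_m}\|_2^2 = \|\Pi_{Q_m}1_{A_m}\|_2^2 + \|\Pi_{(Q_m,q)}f_m\|_2^2$ gives the claimed jump; the observation that $d=0$ renders the low-rank alternative of Theorem \ref{thm:u3-inverse} vacuous is also right.

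The genuine gap is in the rank-refinement step. You invoke Lemma \ref{lem:high-rank-partitioning} and then assert the $L^2$-energy is ``merely redistributed rather than lost'', but that lemma preserves the \emph{maximal level-set density} $\delta_Q$ of Definition \ref{def:max-level-density}, an $L^\infty$-type quantity, and its proof selects sub-cosets by pigeon-holing on $\delta_Q$ alone; a coset good for max density need not be good for energy. Furthermore, $\Pi_{(Q_m,q)}$ conditions on level sets within all of $H_m$, whereas after restriction to a coset $C$ of $\tilde H$ the projection you need conditions on level sets within $C$ only; these do not coincide. What is true is that joining the $(Q_m,q)$-level-set $\sigma$-algebra with the $\sigma$-algebra of $\tilde H$-cosets refines the former (so energy is monotone under that refinement), and on each coset the cells of the join are $(\tilde Q + L)$-level sets for a coset-dependent affine correction $L$ --- this is precisely why Definition \ref{def:max-level-density} quantifies over $L$. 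Making your outline rigorous therefore requires re-opening the proof of Lemma \ref{lem:high-rank-partitioning}: one must insert this refinement explicitly, track the refined energy through the two internal coset passes (over cosets of $K$ and of $\hat K$), and pigeon-hole at each pass on a combination of density and energy, budgeting an $\eps/(\text{number of stages})$ loss per step. This is a modification of the lemma rather than an application of it, and it is the part your write-up leaves asserted. (A smaller mismatch: the inverse theorem outputs an inhomogeneous quadratic polynomial $q$, but Lemma \ref{lem:high-rank-partitioning} takes homogeneous forms; you must peel off the homogeneous part for the rank argument and absorb the affine part into the $L$ and $a$ of the level-set description, then reassemble an inhomogeneous $Q$ at the end.)
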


\begin{proof}[Proof of Theorem \ref{thm:poly-log-density}]
Let $\delta := |A|p^{-n}$ denote the density of $A$. We apply Green and Tao's weak regularity lemma (Theorem \ref{thm:weak-reg}) with values of $\eta$, $\eps$, $R$ to be determined. Write $B := (A-t)\cap H$ and $f := \Pi_Q1_B$, so that $f(x) = f_Q(a) = \E_{y \in Q^{-1}(a)} 1_B(y)$ when $Q(x) = a$. By Corollary \ref{cor:quad-level-size}, either $R \ll (\eps\eta)^{-O_p(1)}$ or for each $a \in \F_p^d$ we have $|Q^{-1}(a)| \leq 2 |H|p^{-d}$. Since $|B| \geq (\delta-\eps)|H|$, we therefore have that
\begin{multline*}
\sum_{a \in \F_p^d} f_Q(a) = \sum_{a \in \F_p^d}  \E_{y \in Q^{-1}(a)} 1_B(y) = \sum_{a \in \F_p^d} \recip{ |Q^{-1}(a)|}\sum_{y \in Q^{-1}(a)} 1_B(y) \\\geq \trecip{2} |H|^{-1}p^{d}\sum_{a \in \F_p^d} \sum_{y \in Q^{-1}(a)} 1_B(y) 
\geq \trecip{2}(\delta - \eps) p^d . 
\end{multline*}
Taking $\eps = \trecip{2} \delta$, we get that $R \ll (\delta\eta)^{-O_p(1)}$ or $\sum_a f_Q(a) \gg \delta p^d$ (so that $f_Q$ has density $\Omega(\delta)$ on $\F_p^d$).

We may assume that $c_1$, $c_2$, $c_3$ in the statement of Theorem \ref{thm:poly-log-density} are distinct and non-zero, since simpler methods suffice otherwise. It then follows from the usual Cauchy-Schwarz argument (``generalised von Neumann theorem'', see \cite[Lemma 3.1]{GreenTaoNewIa}) plus telescoping identity that
\begin{multline}\label{eq:struct-vs-full}
|\E_{x,y \in H} 1_B(x)1_B(x+c_1y)1_B(x+c_2y)1_B(x+c_3y)-\\ \E_{x,y \in H} f(x)f(x+c_1y)f(x+c_2y)f(x+c_3y)|
 \ll \normnorm{1_B -f}_{U^3}/\normnorm{1_H}_{U^3} \leq \eta.
\end{multline}
Let us therefore analyse the count
\begin{multline*}
\E_{x,y\in H}f(x)f(x+c_1y)f(x+c_2y)f(x+c_3y) =\\ \sum_{a_i \in \F_p^d}\prod_if_Q(a_i)\E_{x,y\in H}1_{ Q^{-1}(a_0)}(x)1_{ Q^{-1}(a_1)}(x+c_1y)1_{ Q^{-1}(a_2)}(x+c_2y)1_{ Q^{-1}(a_3)}(x+c_3y).
\end{multline*}
We decompose the inner sum using orthogonality:
\begin{multline*}
\E_{x,y\in H}1_{ Q^{-1}(a_0)}(x)1_{ Q^{-1}(a_1)}(x+c_1y)1_{ Q^{-1}(a_2)}(x+c_2y)1_{ Q^{-1}(a_3)}(x+c_3y) = 
\E_{\gamma_i \in \F_p^d} \E_{x,y\in H}\\ e_p(\gamma_0\sqbrac{Q(x)-a_0})e_p(\gamma_1\sqbrac{Q(x+c_1y)-a_1})e_p(\gamma_2\sqbrac{Q(x+c_2y)-a_2})e_p(\gamma_3\sqbrac{Q(x+c_3y)-a_3}).
\end{multline*}
For fixed $\gamma_i$, the Weyl bound (Lemma \ref{lem:weyl-bound}) gives that the inner sum is at most $p^{-R/2}$ unless all of the following hold
\begin{align*}
\gamma_0 +\gamma_1 + \gamma_2 + \gamma_3 & = 0,\\
c_1\gamma_1 +c_2\gamma_2 +  c_3\gamma_3  & = 0,\\
c_1^2\gamma_1 +c_2^2\gamma_2 + c_3^2\gamma_3  & = 0.
\end{align*}
Equivalently
\begin{align*}
\gamma_1  = -\tfrac{c_2c_3}{(c_1-c_2)(c_1-c_3)}\gamma_0, \qquad
\gamma_2    = \tfrac{c_1c_3}{(c_1-c_2)(c_2-c_3)}\gamma_0,\qquad
  \gamma_3   = - \tfrac{c_1c_2}{(c_1-c_3)(c_2-c_3)}\gamma_0.
\end{align*}
Therefore, writing $C_0:= 1$, $C_1 := -\tfrac{c_2c_3}{(c_1-c_2)(c_1-c_3)}$, $C_2 := \tfrac{c_1c_3}{(c_1-c_2)(c_2-c_3)}$ and $C_3 :=- \tfrac{c_1c_2}{(c_1-c_2)(c_2-c_3)}$ we have
\begin{multline*}
\E_{x,y\in H}f(x)f(x+c_1y)f(x+c_2y)f(x+c_3y) =\\ p^{-3d}\sum_{a_i \in \F_p^d}\prod_if_Q(a_i)\E_{\gamma \in \F_p^d}e_p\brac{-\gamma\sqbrac{ C_0a_0+C_1a_1+C_2a_2+C_3a_3}}+ O(p^{O(d)-\Omega(R)})\\
= p^{-3d}\sum_{C_0a_0+C_1a_1+C_2a_2+C_3a_3 = 0}f_Q(a_0)f_Q(a_1)f_Q(a_2)f_Q(a_3)+ O(p^{O(d)-\Omega(R)}).
\end{multline*}

Since the $C_i$ sum to zero and $f_Q$ has density $\Omega(\delta)$ on $\F_p^d$ (or else $R \ll (\delta\eta)^{-O_p(1)}$), we can apply the polynomial method (Lemma \ref{lem:poly}) to deduce that
\begin{equation}\label{eq:struct-count}
\E_{x,y\in H}f(x)f(x+c_1y)f(x+c_2y)f(x+c_3y) \geq \Omega_p(\delta^{O_p(1)}) - O(p^{O(d)-\Omega(R)}).
\end{equation}
Thus the left-hand side of \eqref{eq:struct-count} is $\gg_p \delta^{O_p(1)}$ or else $R \ll_p d+ (\delta\eta)^{-O_p(1)}\ll_p(\delta\eta)^{-O_p(1)}$. Using \eqref{eq:struct-vs-full}, we can take $\eta\gg_p \delta^{O_p(1)}$ and $R \ll_p  \delta^{-O_p(1)}$ and thereby guarantee that
$$
\sum_{x,y\in H}1_B(x)1_B(x+c_1y)1_B(x+c_2y)1_B(x+c_3y) \gg_p \delta^{O_p(1)}|H|^2.
$$
Since the number of configurations in $B$ with $y = 0$ is at most $|H|$, it follows that either $B$ (and hence $A$) contains a non-trivial configuration, or else $|H| \ll_p \delta^{-O_p(1)}$. Since the weak regularity lemma (Theorem \ref{thm:weak-reg}) gives (with our choices of $\eps$, $\eta$, $R$) that the codimension of $H$ in $\F_p^n$ is $ \ll_p \delta^{-O_p(1)}$, we deduce that the only way that $A$ can lack non-trivial configurations is if $n \ll_p \delta^{-O_p(1)}$.
\end{proof}

\subsection{Translation-invariant systems of complexity two}

In this subsection, we discuss the limitations of generalising the above approach to all translation-invariant systems of complexity two. In particular, Conjecture \ref{conj:complex-2} seems out of reach of the methods of this appendix, yet may be within reach of the density increment method of \S\S\ref{sec:ramsey-bound}--\ref{sec:sparsity-expansion-density-increment}.

\begin{definition}[Translation-invariant system]\label{def:translation-invariant}
Given vectors $\phi_1, \dots, \phi_k \in \F_p^d$, the \textit{system} $\Phi:= [\phi_1, \dots, \phi_k]$ defines a \textit{configuration} 
$$
\Phi(x_1, \dots, x_d) := \sqbrac{\phi_1^T(x_1, \dots, x_d), \dots, \phi_k^T(x_1, \dots, x_d)}.
$$
For example, the Brauer configuration $\sqbrac{x, x+y, x+2y, y}$ corresponds to the system $\Phi = \sqbrac{(1,0), (1,1), (1,2), (0,1)}$. The {system}  is \textit{translation-invariant} if the space of  configurations contains the diagonal, so that
$$
\set{\Phi(x_1, \dots, x_d) : x_i \in \F_p} \supset \set{[x,x, \dots, x] : x \in \F_p}.
$$
Equivalently, the matrix $\Phi$ has $(1,1, \dots, 1)$ in its row-space, so that there exist $\lambda_1$, \dots, $\lambda_d$ such that for each $i=1, \dots, k$ we have $\sum_{j=1}^d \lambda_j\phi_{ij} =1$. For example, the system corresponding to four-term progressions $\Phi = [(1,0),(1,1),(1,2),(1,3)]$ is translation-invariant, but the system corresponding to the Brauer configuration is not.
\end{definition}

\begin{definition}[Complexity at most two]\label{def:complex-two}

Given vectors $\phi_1, \dots, \phi_k \in \F_p^d$, we say that the {system} $\Phi:= [\phi_1, \dots, \phi_k]$ has 
\textit{complexity at most two} if the $k$ vectors
$$
\phi_i^{\otimes 3} = \phi_i\otimes\phi_i\otimes \phi_i = (\phi_{ij_1}\phi_{ij_2}\phi_{ij_3})_{1\leq j_1, j_2, j_3 \leq d} \qquad (1\leq i \leq k)
$$
are linearly independent over $\F_p$. For example, the Brauer system $\Phi = \sqbrac{(1,0), (1,1), (1,2), (0,1)}$ has complexity 
at most two since
$$
\Phi^{\otimes 3} = [(1,0,0,0,0,0,0,0),(1,1,1,1,1,1,1,1),(1,2,2,4,2,4,4,8),(0,0,0,0,0,0,0,1)]
$$
are linearly independent. 
\end{definition}
%
%
%

%
%

The key bridge between quadratic Fourier analysis and the study of complexity two configurations is the following  result of Manners \cite[Theorem 1.1.5]{manners2021true}.
\begin{theorem}[$U^3$-control of complexity-two configurations, \cite{manners2021true}]\label{thm:true}
Let $p$ be prime and $\phi_1, \dots, \phi_k \in \F_p^d$ such that the system $\Phi = [\phi_1, \dots, \phi_k]$ has complexity at most two. Let $H$ be a finite vector space over $\F_p$, let $f_1, \dots, f_k : H \to \C$ be 1-bounded functions and $0< \eta \leq 1/2$.  Suppose that
\begin{equation*}
\Bigabs{\sum_{x_1, \dots, x_d \in H} \prod_{i=1}^k f_i\sqbrac{\phi_i^T(x_1, \dots, x_d)}} \geq \eta |H|^d.
\end{equation*} 
Then for each $f_i$ we have $\norm{f_i}_{U^3} \geq \eta^{O_\Phi(1)}\norm{1_H}_{U^3}$.
\end{theorem}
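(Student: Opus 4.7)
The plan is to prove this as a ``generalised von Neumann'' theorem, following the strategy pioneered by Gowers \cite{GowersNewFour} and refined by Green--Tao, Gowers--Wolf and Manners. By symmetry (after relabelling the indices), it suffices to prove $\norm{f_k}_{U^3} \gg \eta^{O_\Phi(1)} \norm{1_H}_{U^3}$ for one fixed index $k$; the plan is to apply the Cauchy--Schwarz inequality three times in order to isolate $f_k$ and to identify the resulting sum with $\norm{f_k}_{U^3}^8$.

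To set up the Cauchy--Schwarz iteration, I would seek three ``difference directions'' $e_1,e_2,e_3 \in \F_p^d$ with the following properties: $\phi_k^T e_j \neq 0$ for each $j$, so that each Cauchy--Schwarz step genuinely differences $f_k$; and, for each $i \neq k$, at least one $\phi_i^T e_j$ vanishes, so that the corresponding $f_i$-factor reduces to a $1$-bounded weight $|f_i|^2 \leq 1$. Given such a triple, doubling the variables along $e_1,e_2,e_3$ in turn converts the original count into an expression of the form
\[
\sum_{h_1,h_2,h_3 \in H} \Biggabs{\sum_{x \in H^d} \prod_i \Delta_{h_1\phi_i^T e_1,\,h_2\phi_i^T e_2,\,h_3\phi_i^T e_3} f_i(\phi_i^T x)},
\]
and a final Cauchy--Schwarz absorbs all factors except the iterated derivatives of $f_k$ into a $1$-bounded weight, leaving a sum that equals $\norm{f_k}_{U^3}^8$ up to a polynomial power of $\eta$. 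This is the classical ``Cauchy--Schwarz complexity $2$'' scheme, directly analogous to the three-step Cauchy--Schwarz used here in the proof of Lemma \ref{thm:brauer-inverse} for the Brauer counting operator.

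The main obstacle is that tensor independence of the $\phi_i^{\otimes 3}$ is, in general, strictly weaker than the combinatorial condition guaranteeing such a triple $(e_1,e_2,e_3)$: in small characteristic these two notions genuinely differ. The deep content of Manners \cite{manners2021true} is an intricate inductive argument---passing to affine subspaces and exploiting a carefully chosen subfactor structure---showing that tensor independence nonetheless implies $U^3$-control, with a polynomial $\eta^{O_\Phi(1)}$ dependence. An alternative route I would first try is to Cauchy--Schwarz more than three times against a family of ``test'' linear forms extracted from the span of $\Phi^{\otimes 3}$, using tensor independence to separate $\phi_k^{\otimes 3}$ from the span of the remaining $\phi_i^{\otimes 3}$, at the cost of a weaker (but still polynomial) power of $\eta$. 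Verifying that any such scheme can be made to work under the bare tensor-independence hypothesis, and delivering a polynomial dependence on $\eta$, is where the real difficulty lies, and is essentially the main theorem of Manners's paper.
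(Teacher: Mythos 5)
The paper does not supply a proof of this theorem: it is stated with the citation \cite[Theorem~1.1.5]{manners2021true} in its heading, and no argument is given in the body of the text. Your proposal is therefore being compared against a bare citation rather than a proof, and on that score you are essentially in agreement with the paper. You correctly observe that a direct three-step Cauchy--Schwarz of generalised von Neumann type requires the \emph{combinatorial} condition (equivalently, the existence of the difference directions $e_1,e_2,e_3$ you describe, which amounts to Cauchy--Schwarz complexity at most $2$), and that the tensor-independence hypothesis of Definition~\ref{def:complex-two} is strictly weaker than this in general, particularly in small characteristic. You also correctly attribute the substantive content --- that algebraic complexity $\le 2$ nonetheless yields $U^3$-control with polynomial loss in $\eta$ --- to Manners's theorem. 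Since your proposal ultimately reduces the statement to a citation of \cite{manners2021true} rather than a self-contained argument, and the paper does exactly the same, the two are aligned; neither contains a proof, and neither claims to.

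One small remark for calibration: the scheme you sketch in the first paragraph, namely finding $e_1,e_2,e_3$ with $\phi_k^T e_j \neq 0$ for each $j$ and, for each $i\neq k$, some $j$ with $\phi_i^T e_j = 0$, is indeed equivalent to the standard ``Cauchy--Schwarz complexity at most $2$'' partition condition (put $\phi_i$ in class $j$ when $\phi_i^T e_j = 0$; the requirement $\phi_k^T e_j \neq 0$ then ensures $\phi_k$ is not in the span of class $j$). So the first paragraph would give a complete and elementary proof were that stronger hypothesis available, and is a correct template for results such as Lemma~\ref{thm:brauer-inverse}. The difficulty you flag is exactly the right one: under the bare hypothesis of Definition~\ref{def:complex-two}, such a partition need not exist, and bridging that gap with polynomial dependence is the content of \cite{manners2021true}.
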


In order to approach Conjecture \ref{conj:complex-2} using the method of the previous subsection, we first  use the weak regularity lemma (Theorem \ref{thm:weak-reg}) to pass to a subspace $H$ where we have the approximation \eqref{eq:weak-regularity}. By Manners' $U^3$-control result (Theorem \ref{thm:true}), in order to count configurations in $A$ it suffices to count configurations with the weights $\Pi_Q1_{(A-t)\cap H}$. By our high-rank assumption on $Q$, this reduces to estimating
$$
\sum_{a_1, \dots, a_k \in \F_p^d: c_1^Ta = \dots = c_r^Ta =0} f_Q(a_1)\dots f_Q(a_k),
$$
where $f_Q : \F_p^d \to [0,1]$ is a function satisfying $\E(f_Q) \gg \delta$ and $c_1, \dots, c_r \in \F_p^k$ are a basis for the space of $ (\lambda_1, \dots, \lambda_k)$ satisfying 
$$
\lambda_1\phi_1^{\otimes 2} + \dots +\lambda_k \phi_k^{\otimes 2}= 0.
$$
The system $c_1^Ta = \dots = c_r^Ta =0$ is itself translation-invariant, as demonstrated in the following.
\begin{lemma}
Let $\Phi = (\phi_1, \dots, \phi_k)$ be a translation-invariant system. If there exist $\mu_i \in \F_p$ such that $\sum_{i=1}^k \mu_i\phi_i^{\otimes 2} = 0$ then $\sum_{i=1}^k \mu_i = 0$. 
\end{lemma}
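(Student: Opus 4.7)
The plan is to extract the relation $\sum_i \mu_i = 0$ by pairing the tensor identity $\sum_i \mu_i \phi_i^{\otimes 2} = 0$ against a suitable rank-one linear functional built from the translation-invariance witness.

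First I would unpack the translation-invariance hypothesis: it gives coefficients $\lambda = (\lambda_1, \dots, \lambda_d) \in \F_p^d$ such that $\lambda^T \phi_i = \sum_{j=1}^d \lambda_j \phi_{ij} = 1$ for every $i = 1, \dots, k$ (this is precisely the statement that $(1,1,\dots,1)$ lies in the row-space of $\Phi$).

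Next I would apply the linear functional $T : \F_p^d \otimes \F_p^d \to \F_p$ defined on pure tensors by $T(v \otimes w) = (\lambda^T v)(\lambda^T w)$ to the identity $\sum_{i=1}^k \mu_i\,\phi_i \otimes \phi_i = 0$. Linearity together with $T(\phi_i \otimes \phi_i) = (\lambda^T \phi_i)^2 = 1$ yields
\[
0 = \sum_{i=1}^k \mu_i (\lambda^T \phi_i)^2 = \sum_{i=1}^k \mu_i,
\]
which is the desired conclusion. (Equivalently, one can simply contract the $d \times d$ matrix $M := \sum_i \mu_i \phi_i \phi_i^T$ on both sides with $\lambda$, obtaining $\lambda^T M \lambda = \sum_i \mu_i (\lambda^T \phi_i)^2 = \sum_i \mu_i$, which must vanish.)

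There is no real obstacle here; the argument is a one-line tensor contraction. The only thing to verify is that the functional $T$ is well-defined on the tensor square, which is immediate since it is the symmetric square of the linear form $v \mapsto \lambda^T v$. Thus the lemma reduces entirely to the observation that the translation-invariance hypothesis supplies a vector $\lambda$ on which all the $\phi_i$ take the same (non-zero) value, and squaring preserves this uniformity.
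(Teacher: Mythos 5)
Your proof is correct and is essentially the paper's proof in slightly different language: the paper shows that squaring the identity $\lambda^T\phi_i = 1$ exhibits $(1,\dots,1)$ in the row space of $\Phi^{\otimes 2}$ and then pairs with $\mu \in \ker \Phi^{\otimes 2}$, which is exactly your contraction of $\sum_i \mu_i\,\phi_i\otimes\phi_i = 0$ against $\lambda\otimes\lambda$.
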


\begin{proof}
Viewing the $\phi_i$ as column vectors, the idea is to show that the matrix $\Phi^{\otimes 2} = (\phi_1^{\otimes 2} , \dots, \phi_k^{\otimes 2})$ has $(1, \dots, 1)$ in its row-space. Since $\mu = (\mu_1, \dots, \mu_k)$ lies in the kernel of $\Phi^{\otimes 2}$ (because $\Phi^{\otimes 2} \mu = 0$), we must therefore have that $(1,\dots, 1)\cdot \mu = 0$.

By translation-invariance, there exist $\lambda_j$ such that for each $i$ we have $\sum_{j=1}^d \lambda_j\phi_{ij} =1$. Squaring this identity gives
$$
1 = \Bigbrac{\sum_{j=1}^d \lambda_j\phi_{ij}}^2 = \sum_{j_1, j_2} \lambda_{j_1}\lambda_{j_2}\phi_{ij_1}\phi_{ij_2}.
$$
Since $\phi_i^{\otimes 2}= (\phi_{ij_1}\phi_{ij_2})_{1\leq j_1, j_2\leq d}$, we have shown that the all-ones vector $(1,\dots, 1)$ lies in the row-space of the matrix $(\phi_1^{\otimes 2}, \dots, \phi_k^{\otimes 2})$.
\end{proof}

One might hope that (just as in the proof of Theorem \ref{thm:poly-log-density}), the configuration determined by the system of equations $c_1^Ta = \dots = c_r^Ta =0$ is attackable with the polynomial method. Gijswijt \cite{gijswijt2023excluding} has general results in this direction, but I expect these may not be general enough for this purpose.



%
\end{document}